\newtheorem{thm}{Theorem}
\newtheorem{rmk}[thm]{Remark}
\newcommand{\ignore}[1]{}
\newcommand{\half}{\frac{1}{2}}
\title{Efficient and Long-Time Accurate Second-Order Methods for Stokes-Darcy Systems}
\author {Wenbin Chen\thanks{School of Mathematical Sciences, Fudan University ({\tt wbchen@fudan.edu.cn}).} \and Max Gunzburger\thanks{Department of Scientific Computing, Florida State University ({\tt gunzburg@fsu.edu)}.} \and Dong Sun\thanks{Department of Mathematics,
Florida State University ({\tt dsun@math.fsu.edu}).} \and Xiaoming Wang\thanks{Department of Mathematics,
Florida State University ({\tt wxm@math.fsu.edu}).}}
\newcommand{\vm}[1]{\vec{\mathbf{#1}}}
\def\btau{{\boldsymbol\tau}}
\begin{document}
\maketitle

\begin{abstract}
We propose and study two second-order in time implicit-explicit (IMEX) methods for the coupled Stokes-Darcy system that governs flows in karst aquifers. The first is a combination of a second-order backward differentiation formula and the second-order Gear's extrapolation approach. The second is a combination of the second-order Adams-Moulton and second-order Adams-Bashforth methods. Both algorithms only require the solution of two decoupled problems at each time step, one Stokes and the other Darcy. Hence, these schemes are very efficient and can be easily implemented using legacy codes. We establish the unconditional and uniform in time stability for both schemes. The uniform in time stability leads to uniform in time control of the error which is highly desirable for modeling physical processes, e.g., contaminant sequestration and release, that occur over very long time scales. Error estimates for fully-discretized schemes using finite element spatial discretizations are derived. Numerical examples are provided that illustrate the accuracy, efficiency, and long-time stability of the two schemes.
\end{abstract}

\begin{keywords}
Stokes-Darcy systems, backward differentiation formulas, Gear's extrapolation, Adams-Moulton and Adams-Bashforth methods, unconditional stability, long-time stability, uniform in time error estimates, finite element methods, karst aquifers
\end{keywords}

\begin{AMS}
35M13, 35Q35, 65N30, 65N55, 76D07, 76S05
\end{AMS}

\pagestyle{myheadings}
\thispagestyle{plain}
\markboth{W. Chen, M. Gunzburger, D. Sun, and X. Wang}{Long-time accurate 2nd-order methods for Stokes-Darcy systems}

\section{Introduction}\label{sec:1}

Karst is a common type of landscape formed by the dissolution of layers of soluble bedrock, usually including carbonate rock, limestone, and dolomite. Karst regions often contain karst aquifers which are important sources of potable water. For example, about 90\% of the fresh water used in Florida comes from karst aquifers \cite{kin}. Clearly, the study of karst aquifers is of great importance, especially because they are seriously threatened by contamination \cite{kun}.

A karst aquifer, in addition to a porous limestone or dolomite matrix, typically has large cavernous conduits that are known to have great impact on groundwater flow and contaminant transport within the aquifer. During high-rain seasons, the water pressure in the conduits is larger than that in the ambient matrix so that conduit-borne contaminants can be driven into the matrix. During dry seasons, the pressure differential reverses and contaminants long sequestered in the matrix can be released into the
free flow in the conduits and exit through, e.g., springs and wells, into surface water systems. Therefore, the understanding of the interaction between the free flow in the conduits and the Darcy flow in the matrix is crucial to the study of groundwater flows and contaminant transport in karst region.

The mathematical study of flows in  karst aquifers is  a well-known challenge due to the coupling of the flow in the conduits and the flow in the surrounding matrix, the complex geometry of the network of conduits, the vastly disparate spatial and temporal scales, the strong heterogeneity of the physical parameters, and the huge associated uncertainties in the data. Even for a small, lab-size conceptual model with only one conduit (pipe) imbedded in a homogenous porous media (matrix), significant mathematically rigorous progress has only recently been achieved. For the coupled Stokes-Darcy model that includes the classical Beavers-Joseph \cite{Beavers1967} matrix-conduit interface boundary condition, see \cite{Cao2010, Cao2010b, Cao2011}. For various simplified interface conditions, see, e.g., \cite{Cao2010, Disc2003, Layton2003}. Nonlinear interface conditions have also been proposed for Navier-Stokes/Darcy modeling; see, e.g., \cite{Ces2012, Disc2009}.

Due to the practical importance of the problem of flow and contaminant transport in karst aquifers, there has been a lot of attention recently paid to the development of accurate and efficient numerical methods for the coupled Stokes-Darcy system; see, e.g., \cite{Cao2012, Disc2003, Layton2003, Mu2007} among many others. The efficiency of the algorithms is a particularly important issue due to the large scale of field applications. Because of the disparity of governing equations and physics in the conduit and matrix, domain decomposition methods (also called partitioned methods by some authors) that only requires separate Stokes and Darcy solves seems natural; see, e.g., \cite{Chen2010, Chen2011, Disc2003, Disc2007, Kubacki2012, Layton2011a, Layton2011b, Layton2012, Mu2010}. On the other hand, long-time accuracy of the schemes is also highly desirable because the physical phenomena of retention and release of contaminants takes place over a very long time scale. Therefore, there is a need to ensure the {\em long-time accuracy} of the discretization algorithms in addition to the standard notion of accuracy on an order one time scale.

The purpose of this work is to propose and investigate two types of numerical methods for the coupled Stokes-Darcy system. We discretize the system in time via either a combination of second-order BDF and and Gear extrapolation methods or a combination of second-order Adams-Moulton and Adams-Bashforth methods. These algorithms are special cases of the implicit-explicit (IMEX) class of schemes. The coupling terms in the interface conditions are treated explicitly in our algorithm so that only two decoupled problems (one Stokes and one Darcy) are solved at each time step. Therefore, these schemes can be implemented very efficiently and, in particular, legacy codes for each of the two components can be utilized. Moreover, we show that our schemes are unconditionally stable and long-time stable in the sense that the solutions remain uniformly bounded in time. The uniform in time bound of the solution further leads to uniform in time error estimates. This is a highly desirable feature because one would want to have reliable numerical results over the long-time scale of contaminant sequestration and release. Uniform in time error estimates for fully discrete schemes using finite element spatial discretizations are also presented. Our numerical experiments illustrate our analytical results.

Our work can be viewed as a time-dependent non-iterative version of the steady-state domain decomposition work in \cite{Chen2011, Disc2003} and as a generalization of the first-order schemes in \cite{Cao2012,Mu2010} that achieve the desirable second-order accuracy withtout increasing the complexity. The backward differentiation-based algorithm can be viewed as an infinite-dimensional version of the scheme presented in \cite{Layton2012}, but with the additional important result on time-uniform error estimates. The Adams-Moulton/Bashford based algorithm is new so far as the Stokes-Darcy problem is concerned. To the best of our knowledge, our uniform in time error estimates are the first of their kind for Stokes-Darcy and related systems.

The rest of the paper is organized as follows. In \S\ref{sec:2}, we introduce the coupled Stokes-Darcy system and the associated weak formulation as well as the two second-order in time schemes. The unconditional and long-time stability with respect to the $L^2$ norm are presented in \S\ref{sec:3}. Section \ref{sec:4} is devoted to the stability with respect to the $H^1$ norm. The $H^1$ estimates are important for the finite element analysis; this is another new feature of our work, even for first-order schemes. In \S\ref{sec:5}, we focus on the error analysis of the fully discretized scheme using finite element spatial discretizations. Numerical results that illustrate the accuracy, efficiency, and long-time stability of our our algorithms are given in \S\ref{sec:6}. We close by providing some concluding remarks in \S\ref{sec:7}.

\section{The Stokes-Darcy system and two types of IMEX methods}\label{sec:2}

\subsection{The Stokes-Darcy system}\label{sec:21}

For simplicity, we consider a conceptual domain for a karst aquifer that consists of a porous media (matrix), denoted by $\Omega_p\in{\mathbb R}^d$, and a conduit, denoted by $\Omega_f\in{\mathbb R}^d$, where $d=2,3$ denotes the spatial dimension. The interface between the matrix and the conduit is denoted $\Gamma$. The remaining parts of the boundaries of $\Omega_p$ and $\Omega_f$ are denoted by $\partial\Omega_p$ and  $\partial\Omega_f$, respectively. See Fig.~\ref{fig:1}.

\begin{figure}[h!]
\centering
\includegraphics[scale=0.4]{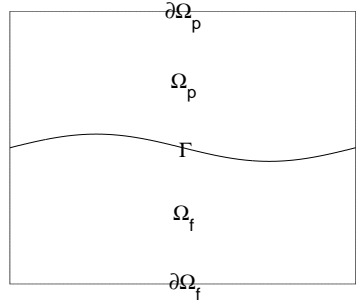}
\caption{The physical domain consisting of a porous media $\Omega_p$ and a free-flow conduit $\Omega_f$.}\label{fig:1}
\end{figure}

The {\em coupled Stokes-Darcy system} governing fluid flow in the karst system is given by \cite{Cao2010, Disc2003}
\begin{equation}\label{SDsys}
\left\{\begin{array}{ll}
\displaystyle S\frac{\partial \phi}{\partial t}-\nabla\cdot({{\mathbb K}\nabla \phi})=f & \quad \mbox{in $\Omega_p$} \\[1ex]
\displaystyle \frac{\partial \mathbf{u}_f}{\partial t}-\nabla\cdot{\mathbb T}\left(\mathbf{u}_f,p\right)=\mathbf{f}\quad\mbox{and}\quad
\nabla\cdot{\mathbf{u}_f}=0    & \quad \mbox{in $\Omega_f$},
\end{array}\right.
\end{equation}
where the unknowns are the fluid velocity $\mathbf{u}_f$ and the kinematic pressure $p$ in the conduit and the hydraulic head $\phi$ in the matrix; the velocity in the matrix is recovered from $\mathbf{u}_p=-{\mathbb K}\nabla \phi$. In \eqref{SDsys}, $\mathbf{f}$ and $f$ denote external body forces acting on the domains $\Omega_f$ and $\Omega_p$ respectively, and ${\mathbb T}(\mathbf{v},p) =  \nu (\nabla \mathbf{v} + \nabla^T \mathbf{v})-p{\mathbb I}$ denotes the stress tensor. The parameters appearing in \eqref{SDsys} are the water storage coefficient $S$, the hydraulic conductivity tensor ${\mathbb K}$, and the kinematic viscosity of the fluid $\nu$.

For simplicity, we assume homogeneous Dirichlet boundary conditions for the hydraulic head and fluid velocity on the outer boundaries $\partial\Omega_p$ and  $\partial\Omega_f$, respectively. On the interface $\Gamma$, we impose the {\em Beavers-Joseph-Saffman-Jones interface conditions} \cite{Beavers1967, Jones1973, Saff1971}
\begin{equation}\label{BJSJ}
\left\{\begin{aligned}
&\mathbf{u}_f\cdot{\bf n}_f=\mathbf{u}_p\cdot{\bf n}_f=-({\mathbb K}\nabla \phi)\cdot {\bf n}_f\\
&-{\btau}_j\cdot({\mathbb T}(\mathbf{u}_f,p_f)\cdot{{\bf n}_f})=\alpha_{BJSJ}{\btau}_j\cdot{\mathbf{u}_f},\quad j=1,\ldots,d-1\\
&-{\bf n}_f\cdot({\mathbb T}(\mathbf{u}_f,p_f)\cdot{{\bf n}_f})=g\phi,
\end{aligned}\right.
\end{equation}
where ${\bf n}_f$ denotes the outer unit normal vector for $\Omega_f$ and $\{{\btau}_j\}_{j=1,2,\ldots,d-1}$, denotes a linearly independent set of vectors tangent to the interface $\Gamma$. The additional parameters appearing in \eqref{BJSJ} are the gravitational constant $g$ and the Beavers-Joseph-Saffman-Jones coefficient $\alpha_{BJSJ}$.


\subsection{Weak formulation}\label{sec:22}

We denote by $(\cdot,\cdot)_{D}$ and $\|\cdot\|_D$ the standard $L^2(D)$ inner product and norm, respectively, where $D$ may be $\Omega_f$, $\Omega_p$, or $\Gamma$. We often suppress the subscript $D$ if there is no possibility of confusion. We define the spaces
$$
\begin{aligned}
 \mathbf{H}_f&=\left\{\mathbf{v}\in\left(H^1(\Omega_f)\right)^d\,\,\,\mid\,\,\, \mbox{$\mathbf{v}=\mathbf{0}$ on $\partial\Omega_f\setminus\Gamma$} \right\}\\
 {H}_p&=\left\{\psi\in H^1(\Omega_p)\,\,\,\mid\,\,\,\mbox{$\psi=0$ on $ \partial\Omega_p\setminus \Gamma$} \right\}\\
 {Q}&=L^2(\Omega_f), \qquad  \mathbf{W}=\mathbf{H}_f\times{H}_p.
\end{aligned}
$$
Dual spaces are denoted by $(\cdot)^\prime$ and duality parings between spaces and their duals induced by the $L^2$ inner product on the appropriate domain are denoted by $\langle \cdot\,,\cdot \rangle$.

A weak formulation of the Stokes-Darcy system \eqref{SDsys} is derived by multiplying the three equations in that system by test functions $\mathbf{v}\in \mathbf{H}_f$, $g\psi\in \mathbf{H}_p$, and $q\in {Q}$, respectively, then integrating over the corresponding domains, then integrating by parts the terms involving second-derivative operators, and then substituting the interface conditions \eqref{BJSJ} in the appropriate terms. The resulting weak formulation is given as follows \cite{Cao2010, Disc2002}: given $f\in (H_p)^\prime$ and ${\bf F}\in ({\bf H}_f)^\prime$, seek $\phi\in H_p$, ${\bf u}_f\in {\bf H}_f$, and  $p\in Q$, with $\partial\phi/\partial t\in (H_p)^\prime$ and $\partial{\bf u}/{\partial t}\in ({\bf H}_f)^\prime$, satisfying
\begin{equation}\label{weakform}
\begin{aligned}
\langle\langle\vm{u}_t,\vec{\mathbf{v}}\rangle\rangle+a(\vm{u}, \vm{v})+b(\mathbf{v},p)+a_\Gamma(\vm{u},\vec{\mathbf{v}})&=\langle\langle\langle\vec{\mathbf{f}},\vec{\mathbf{v}}\rangle\rangle\rangle \\  b(\mathbf{u},q)&=0,
\end{aligned}
\end{equation}
where $\vm{u}=[\mathbf{u},\phi]^T$, $\vec{\mathbf{v}}=[\mathbf{v},\psi]^T$, and $\vec{\mathbf{f}}=[\mathbf{f},gf]^T$ and where $(\cdot)_t=\partial(\cdot)/\partial t$. In \eqref{weakform}, we have that
\begin{equation}\label{bilinearforms}
\begin{aligned}
& \langle\langle\vm{u}_t,\vec{\mathbf{v}}\rangle\rangle=\langle\mathbf{u}_t, \mathbf{v}\rangle_{\Omega_f} + gS\langle\phi_t, \psi\rangle_{\Omega_p},\qquad
 b(\mathbf{v},q)=-(q, \nabla\cdot \mathbf{v})_{\Omega_f}
\\
&a(\vm{u}, \vm{v})= a_f(\mathbf{u},\mathbf{v})+a_p(\phi,\psi)+a_{BJSJ}(\mathbf{u},\mathbf{v})\\
& a_{\Gamma}(\vm{u},\vec{\mathbf{v}})=g(\phi, \mathbf{v}\cdot{\bf n}_f)_{\Gamma}-g(\mathbf{u}\cdot{\bf n}_f, \psi)_\Gamma,
\quad
\langle\langle\langle\vec{\mathbf{f}},\vec{\mathbf{v}}\rangle\rangle\rangle = \langle{\mathbf{f}},{\mathbf{v}}\rangle_{\Omega_f} +g\langle f ,\psi  \rangle_{\Omega_p},
\end{aligned}
\end{equation}
where
$$
\begin{aligned}
 a_f(\mathbf{u},\mathbf{v})=\nu(&\nabla\mathbf{u}, \nabla\mathbf{v})_{\Omega_f},\qquad
 a_p(\phi,\psi)=g({\mathbb K}\nabla\phi, \nabla\psi)_{\Omega_p}
\\&
 a_{BJSJ}(\mathbf{u},\mathbf{v})=\alpha_{BJ}(\mathbf{u}\cdot\vec{\tau}
,\mathbf{v}\cdot\vec{\tau})_{\Gamma}.
\end{aligned}
$$
In \eqref{weakform}, $\mathbf{u}_f$, $\phi$, and $p$ are the primary variables; as mentioned before, once the hydraulic head $\phi$ is known, one can recover $\mathbf{u}_p$, the velocity in the porous media, via the Darcy relation $\mathbf{u}_p=-{\bf K}\nabla\phi$.

It can be shown that the bilinear form $a(\cdot,\cdot)$ is {\em coercive}; indeed, we have that \cite{Cao2010}
\begin{equation}\label{ineq2}
  a({\vm{u}},{\vm{u}}) \ge (\nu\|\nabla
\mathbf{u}\|^2+gK_{\min}\|\nabla\phi\|^2  + \alpha_{BJ} \|\mathbf{u}\cdot\vec{\tau}\|^2_\Gamma)\ge C_a  \|\nabla \vm{u}\|^2,
\end{equation}
where $C_a=\min(\nu,gK_{\min})>0$ and where $K_{\min}$ denotes the smallest eigenvalue of ${\mathbb K}$. We define the norms $\|\vm{u}\|_a= a({\vm{u}},{\vm{u}})^\half$ and $\|\vm{v}\|_S=\langle\langle\vm{v},\vm{v}\rangle\rangle^\half$. We have that $\|\vm{v}\|_S$ is equivalent to the $L^2$ norm, i.e., we have that
\begin{equation}\label{eq:normS}
   c_s\|\vm{v}\|_S\le\|\vm{v}\|\le C_S \|\vm{v}\|_S.
 \end{equation}


\subsection{The second-order backward-differentiation scheme (BDF2)}\label{sec:23}

The first scheme we introduce discretizes in time via a second-order BDF whereas the interface term is treated via a second-order explicit Gear's extrapolation formula. We propose the following algorithm: for any $\vec{\mathbf{v}}\in \mathbf{W}$ and $q\in Q$,
\begin{equation}\label{schemeBDF}
\begin{aligned}
&\Big\langle\Big\langle\frac{3\vm{u}^{n+1}-4\vm{u}^n+\vm{u}^{n-1}}{2\Delta
t},\vec{\mathbf{v}}\Big\rangle\Big\rangle +a({\vm{u}}^{n+1},\vm{v}) +b(\mathbf{v},p^{n+1}) +a_{st}(\vm{u}^{n+1},\vec{\mathbf{v}})\\
&\qquad\qquad =\langle\langle\langle\vec{\mathbf{f}}^{n+1},{\vec{\mathbf{v}}}\rangle\rangle\rangle- \widetilde{a}_\Gamma(2\vm{u}^{n}-\vm{u}^{n-1},\vec{\mathbf{v}})\\
&b(\mathbf{u}^{n+1},q)=0,
\end{aligned}
\end{equation}
where the artificial stabilizing term $a_{st}(\cdot,\cdot)$ is defined as
\begin{equation}\label{ast}
a_{st}(\vm{u},\vec{\mathbf{v}})=\gamma_f
(\mathbf{u}\cdot{\bf n}_f, \mathbf{v}\cdot{\bf n}_f)_\Gamma+\gamma_p(\phi, \psi)_\Gamma
\end{equation}
with parameters $\gamma_f,\gamma_p\ge0$ and $\widetilde{a}_{\Gamma}(\vm{u},\vm{v})$ is defined as
$$
\widetilde{a}_{\Gamma}(\vm{u},\vm{v}) = a_\Gamma(\vm{u},\vm{v}) -
 a_{st}(\vm{u},\vm{v}).
$$

\subsection{The second-order Adams-Moulton-Bashforth method (AMB2)}\label{sec:24}

For the second scheme, we combine the second-order implicit Adams-Moulton treatment of the symmetric terms and the second-order explicit Adams-Bashforth treatment of the interface term to propose the following second-order scheme: for any $\vec{\mathbf{v}}\in \mathbf{W}$ and $q\in Q$,
\begin{equation}\label{schemeAMB}
\begin{aligned}
&\Big\langle\Big\langle\frac{\vm{u}^{n+1}-\vm{u}^n}{\Delta
t},\vec{\mathbf{v}}\Big\rangle\Big\rangle+a\left(D_\alpha{\vm{u}}^{n+1},\vm{v}\right)+b\left(\mathbf{v},D_\alpha{p}^{n+1}\right)
 +a_{st}\left(D_\alpha{\vm{u}}^{n+1},\vec{\mathbf{v}}\right)\\
& \qquad \qquad =\langle\langle\langle\vec{\mathbf{f}}^{n+\frac{1}{2}},{\vec{\mathbf{v}}}\rangle\rangle\rangle-
\widetilde{a}_\Gamma\Big(\frac{3}{2}\vm{u}^{n}-\frac{1}{2}\vm{u}^{n-1},\vec{\mathbf{v}}\Big)\\
&b\left(D_\alpha{\mathbf{u}}^{n+1},q\right)=0,
\end{aligned}\end{equation}
where $D_\alpha$ denotes the difference operator that  depends on a parameter $\alpha$ and is defined by $D_\alpha v^{n+1}=\alpha{v}^{n+1}+\left(\frac{3}{2}-2\alpha\right){v}^{n}+
\left(\alpha-\frac{1}{2}\right){v}^{n-1}$.
The stabilizing term $a_{st}(\cdot,\cdot)$ is defined as in \eqref{ast}.

\subsection{Efficiency of the schemes}\label{sec:25}

The implemented schemes are highly efficient because we can decouple the Stokes and Darcy subproblems:

\begin{enumerate}
\item given $\vm{u}^{n},\vm{u}^{n-1}$
\item set $\vec{\mathbf{v}}=[\mathbf{v},0]$ so that all terms involving $\phi$ drop out and we only need to use a fast Stokes solver to obtain $\mathbf{u}^{n+1}$;
\item set $\vec{\mathbf{v}}=[0,\psi]$ so that all terms involving $\mathbf{u}$ drop out and we only need a fast Poisson solver to obtain $\phi^{n+1}$;
\item Set $n=n+1$ and return to step 1.
\end{enumerate}
Note that steps 2 and 3 can be solved independently. Moreover, legacy codes can be used in each of those steps.

\section{Unconditional and long-time stability}\label{sec:3}

The goal of this section is to demonstrate the unconditional and long-time stability, with respect to the $L^2$ norm, of the two second-order schemes proposed in \S\ref{sec:2}. We first recall a few basic facts and notations that are needed below.

Recall that the $G$-matrix associated with the classical second-order BDF is given by
$$
G=\left(\begin{array}{cc}\frac{1}{2} & -1 \\ -1 &
\frac{5}{2}
\end{array} \right)
$$
with the associated  $G$-norm given by
$\|\mathbf{w}\|^2_{G}=\big(\mathbf{w}, G\mathbf{w}\big)
\quad \forall\, \mathbf{w} \in (L^2(\Omega))^2 .$
The following identity is well-known  (see, e.g., \cite{Hairer2002}): for any $v_i\in L^2(\Omega)$, $i=0,1,2$,
\begin{equation}\label{Gnormid}
\Big(\frac{3}{2}v_2-2v_1+\frac{1}{2}v_0,v_2\Big)
 =\frac{1}{2}\left(\|\mathbf{w}_1\|^2_{G}-\|\mathbf{w}_0\|^2_{G}\right)
 +\frac{\|v_2-2v_1+v_0\|^2}{4},
\end{equation}
where $\mathbf{w}_0=[v_0,v_1]^T$ and $\mathbf{w}_1=[v_1,v_2]^T$. We also apply the $G$ matrix to functions belonging to $\mathbf{W}$: for any $\mathbf{w}\in \mathbf{W}^2$, define
$|\mathbf{w}|_G^2=\langle \mathbf{w}, G\mathbf{w}\rangle.$ Then, for any $\vm{v}_i\in W$, $i=0,1,2$,
$$
 \Big\langle\Big\langle\frac{3}{2}\vm{v}_2-2\vm{v}_1+\frac{1}{2}\vm{v}_0,\vm{v}_2  \Big\rangle\Big\rangle
 =\frac{1}{2}\left(|\mathbf{w}_1|^2_{G}- |\mathbf{w}_0|^2_{G}\right)
 +\frac{\|\vm{v}_2-2\vm{v}_1+\vm{v}_0\|_S^2}{4},
$$
where $\mathbf{w}_0=[\vm{v}^0,\vm{v}^1]^T$ and $\mathbf{w}_1=[\vm{v}_1,\vm{v}_2]^T$.

The $G$-norms are equivalent norms on $(L^2(\Omega))^2$ in the sense that there exists $C_l, C_u >0$ such that
$$
C_l\|\mathbf{w}\|^2_{G}\le\|\mathbf{w}\|^2\le C_u\|\mathbf{w}\|^2_{G}\qquad\mbox{and} \qquad C_l\|\mathbf{w}\|^2_{G}\le|\mathbf{w}|^2_{G}\le C_u\|\mathbf{w}\|^2_{G}.
$$

We next recall the following basic inequalities:
\begin{itemize}
 \item trace inequality: if $\vec{\mathbf{v}}\in \mathbf{W}$, then
\begin{equation}\label{Inq:trace}
       \|\vec{\mathbf{v}}\|_{\Gamma}\le C_{tr} \sqrt{\|\vec{\mathbf{v}}\|\|\nabla\vec{\mathbf{v}}\|} , \qquad
       \|\vec{\mathbf{v}}\|_{\Gamma}\le C_{tr} \|\nabla\vec{\mathbf{v}}\|
\end{equation}
\item Poincar\'{e} inequality:\quad if $\vec{\mathbf{v}}\in \mathbf{W}$, then
$
\|\vec{\mathbf{v}}\|\le C_{P} \|\nabla\vec{\mathbf{v}}\|
$
 \item Young inequality:\quad
$
  a^{\frac{1}{2}}b^{\frac{1}{2}}c\le \frac{\varepsilon a^2}{4}+\frac{ b^2}{4\varepsilon^3 }+\frac{\varepsilon c^2}{2} \quad \forall\, a, b, c, \varepsilon >0 .
$
\end{itemize}
Other variants of Young's inequality will also be used.

The following estimate follows from the basic inequalities.

\begin{lemma}\label{Lemma1}
Let $a_{\gamma}(\cdot,\cdot)$ and $a_{st}(\cdot,\cdot)$ be defined as in \eqref{bilinearforms} and \eqref{ast}, respectively. Then, there exists a constant $C_{ct}$ such that
$$
|a_{st}(\vm{u},\vec{\mathbf{v}})|+|a_\Gamma(\vm{u},\vec{\mathbf{v}})|\le C_{ct}\|\vm{u}\|_\Gamma\|\vec{\mathbf{v}}\|_\Gamma
\qquad\forall\,\vm{u}, \vec{\mathbf{v}}\in \mathbf{W}.
$$
\end{lemma}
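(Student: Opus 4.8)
The statement to prove bounds the interface bilinear forms $a_{st}$ and $a_\Gamma$ by a product of $\Gamma$-norms. Both forms are built from $L^2(\Gamma)$ inner products of various scalar or normal-trace components of the vector functions $\vm{u},\vec{\mathbf{v}}\in\mathbf{W}$. My plan is simply to expand the definitions and apply the Cauchy--Schwarz inequality on $\Gamma$ term by term, then absorb the resulting constants ($g$, $\gamma_f$, $\gamma_p$) into a single constant $C_{ct}$.

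First I would recall the definitions: from \eqref{ast}, $a_{st}(\vm{u},\vec{\mathbf{v}})=\gamma_f(\mathbf{u}\cdot{\bf n}_f,\mathbf{v}\cdot{\bf n}_f)_\Gamma+\gamma_p(\phi,\psi)_\Gamma$, and from \eqref{bilinearforms}, $a_\Gamma(\vm{u},\vec{\mathbf{v}})=g(\phi,\mathbf{v}\cdot{\bf n}_f)_\Gamma-g(\mathbf{u}\cdot{\bf n}_f,\psi)_\Gamma$. For $a_{st}$, Cauchy--Schwarz on each term gives $|a_{st}(\vm{u},\vec{\mathbf{v}})|\le\gamma_f\|\mathbf{u}\cdot{\bf n}_f\|_\Gamma\|\mathbf{v}\cdot{\bf n}_f\|_\Gamma+\gamma_p\|\phi\|_\Gamma\|\psi\|_\Gamma$. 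Since $|\mathbf{u}\cdot{\bf n}_f|\le|\mathbf{u}|$ pointwise (as ${\bf n}_f$ is a unit vector), we have $\|\mathbf{u}\cdot{\bf n}_f\|_\Gamma\le\|\mathbf{u}\|_\Gamma\le\|\vm{u}\|_\Gamma$, and likewise $\|\phi\|_\Gamma\le\|\vm{u}\|_\Gamma$ (using that $\|\vm{u}\|_\Gamma^2=\|\mathbf{u}\|_\Gamma^2+\|\phi\|_\Gamma^2$ in the product-space norm). Thus each term is bounded by $\|\vm{u}\|_\Gamma\|\vec{\mathbf{v}}\|_\Gamma$, giving $|a_{st}(\vm{u},\vec{\mathbf{v}})|\le(\gamma_f+\gamma_p)\|\vm{u}\|_\Gamma\|\vec{\mathbf{v}}\|_\Gamma$. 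The same computation applied to the two terms of $a_\Gamma$ yields $|a_\Gamma(\vm{u},\vec{\mathbf{v}})|\le 2g\|\vm{u}\|_\Gamma\|\vec{\mathbf{v}}\|_\Gamma$. Adding, one takes $C_{ct}=\gamma_f+\gamma_p+2g$.

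There is no real obstacle here; the only point requiring a word of care is the precise meaning of $\|\vm{u}\|_\Gamma$ for $\vm{u}=[\mathbf{u},\phi]^T$ in the product space $\mathbf{W}$, namely that it controls both $\|\mathbf{u}\|_\Gamma$ and $\|\phi\|_\Gamma$ individually, which is immediate from the definition of the norm on a product of $L^2(\Gamma)$-spaces. One should also note that the $a_{BJSJ}$-type tangential term does not appear in either $a_{st}$ or $a_\Gamma$, so it plays no role in this lemma. The lemma is purely a continuity (boundedness) statement for the interface forms with respect to the $\Gamma$-norm, and it will be used later together with the trace inequality \eqref{Inq:trace} to absorb interface contributions into the coercive and dissipative terms.

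\begin{proof}[Proof of Lemma~\ref{Lemma1}]
By the definitions \eqref{ast} and \eqref{bilinearforms} of $a_{st}$ and $a_\Gamma$, the Cauchy--Schwarz inequality on $L^2(\Gamma)$, and the pointwise bounds $|\mathbf{u}\cdot{\bf n}_f|\le|\mathbf{u}|$, $|\mathbf{v}\cdot{\bf n}_f|\le|\mathbf{v}|$ (valid since $|{\bf n}_f|=1$), we obtain
\begin{equation*}
|a_{st}(\vm{u},\vec{\mathbf{v}})|\le\gamma_f\|\mathbf{u}\|_\Gamma\|\mathbf{v}\|_\Gamma+\gamma_p\|\phi\|_\Gamma\|\psi\|_\Gamma
\end{equation*}
and
\begin{equation*}
|a_\Gamma(\vm{u},\vec{\mathbf{v}})|\le g\|\phi\|_\Gamma\|\mathbf{v}\|_\Gamma+g\|\mathbf{u}\|_\Gamma\|\psi\|_\Gamma.
\end{equation*}
Since $\vm{u}=[\mathbf{u},\phi]^T$ and $\vec{\mathbf{v}}=[\mathbf{v},\psi]^T$ with $\|\vm{u}\|_\Gamma^2=\|\mathbf{u}\|_\Gamma^2+\|\phi\|_\Gamma^2$ and $\|\vec{\mathbf{v}}\|_\Gamma^2=\|\mathbf{v}\|_\Gamma^2+\|\psi\|_\Gamma^2$, each of the four products on the right-hand sides above is bounded by $\|\vm{u}\|_\Gamma\|\vec{\mathbf{v}}\|_\Gamma$. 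Adding the two estimates yields
\begin{equation*}
|a_{st}(\vm{u},\vec{\mathbf{v}})|+|a_\Gamma(\vm{u},\vec{\mathbf{v}})|\le(\gamma_f+\gamma_p+2g)\|\vm{u}\|_\Gamma\|\vec{\mathbf{v}}\|_\Gamma,
\end{equation*}
so the claim holds with $C_{ct}=\gamma_f+\gamma_p+2g$.
\end{proof}
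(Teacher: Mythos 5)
Your proof is correct and follows essentially the same route as the paper: expand the definitions, apply Cauchy--Schwarz on each $L^2(\Gamma)$ inner product, and use that each component norm ($\|\mathbf{u}\cdot{\bf n}_f\|_\Gamma$, $\|\phi\|_\Gamma$, etc.) is controlled by the product-space norm $\|\vm{u}\|_\Gamma$. The only difference is cosmetic --- you bound each of the four cross terms by $\|\vm{u}\|_\Gamma\|\vec{\mathbf{v}}\|_\Gamma$ and sum, obtaining $C_{ct}=\gamma_f+\gamma_p+2g$, whereas the paper factors the sum into a product of sums and obtains a $\max$-type constant; both are valid since the lemma only asserts existence of some $C_{ct}$.
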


\begin{proof}
By the definition \eqref{ast} of $a_{st}(\cdot,\cdot)$, we have
\begin{equation}\label{astineq}
\begin{aligned}
|a_{st}(\vm{u},\vec{\mathbf{v}})|
&\le \gamma_f |( \mathbf{u}\cdot {\bf n}_f, \mathbf{v}\cdot {\bf n}_f)_\Gamma|
+\gamma_p|( \phi, \psi)_{\Gamma}|\\
&\le \gamma_f \|\mathbf{u}\cdot {\bf n}_f\|_\Gamma
\|\mathbf{v}\cdot
\vec{n}_f\|_\Gamma+\gamma_p\|\phi\|_\Gamma\|\psi\|_\Gamma\\
&\le \gamma_{\max}\left(\|\mathbf{u}\cdot {\bf n}_f\|_\Gamma
\|\mathbf{v}\cdot
{\bf n}_f\|_\Gamma + \|\phi\|_\Gamma\|\psi\|_\Gamma\right),
\end{aligned}
\end{equation}
where the triangle and Cauchy-Schwarz inequalities are used and
$\gamma_{\max}=\max\{\gamma_f,\gamma_p\}$. Similarly, by the definition \eqref{bilinearforms} of $a_{\Gamma}(\cdot,\cdot)$, we have
\begin{equation}\label{agamma}
|a_{\Gamma}(\vm{u},\vec{\mathbf{v}})|
\le g\left( \|\phi\|_\Gamma \|\mathbf{v}\cdot {\bf n}_f\|_\Gamma + \|\mathbf{u}\cdot {\bf n}_f\|_\Gamma\|\psi\|_\Gamma\right).
\end{equation}
Note that $\vm{u}=[\mathbf{u}, \phi]^T$ so that
$$
\|\vm{u}\|^2_\Gamma=\|\mathbf{u}\|_\Gamma^2 + \|\phi\|_\Gamma^2=\|\mathbf{u}\cdot{\bf n}_f\|_\Gamma^2 + \|\mathbf{u}\cdot{\boldsymbol\tau}\|^2_\Gamma+\|\phi\|^2_\Gamma.
$$
Then, combining (\ref{astineq}) and (\ref{agamma}), we obtain
$$
\begin{aligned}
|a_{st}(\vm{u}&,\vec{\mathbf{v}})|+|a_\Gamma(\vm{u},\vec{\mathbf{v}})|
\\    &\le (\gamma_{\max}(\|\mathbf{u}\cdot\vec{n}_f\|_\Gamma \|\mathbf{v}\cdot
\vec{n}_f\|_\Gamma + \|\phi\|_\Gamma \|\psi\|_\Gamma )+g(
\|\phi\|_\Gamma\|\mathbf{v}\cdot \vec{n}_f\|_\Gamma + \|\mathbf{u}\cdot
\vec{n}_f\|_\Gamma \|\psi\|_\Gamma))\\
  &\le \max\{\gamma_{\max},g\} (\|\mathbf{u}\cdot
\vec{n}_f\|_\Gamma  +\|\phi\|_\Gamma  )( \|\mathbf{v}\cdot \vec{n}_f\|_\Gamma+\|\psi\|_\Gamma)\\
&\le  \sqrt{2}\max\{\gamma_{\max},g\} \|\vm{u}\|_{\Gamma}\|\vec{\mathbf{v}}\|_{\Gamma}.
\end{aligned}
$$
The lemma is proved by setting $C_{ct}=\sqrt{2}\max\{\gamma_{\max},g\}$.
\end{proof}

For the sake of brevity, we introduce the {\em BDF difference operator}
$Dv^{n+1}= \frac{3}{2}v^{n+1}-2v^n+\half v^{n-1}$ and the {\em central difference operator} $\delta v^{n+1}= v^{n+1}-2v^{n}+v^{n-1}$.

\subsection{Unconditional stability of the the BDF2 and AMB2 schemes}\label{sec:31}

\subsubsection{Unconditional stability of the BDF2 scheme}\label{sec:311}
\textcolor{white}{xxx}

\begin{theorem}\label{thm31}
Let $T>0$ be any fixed time. Then, the {\em BDF2} scheme \eqref{schemeBDF} is unconditionally stable on $(0,T]$.
\end{theorem}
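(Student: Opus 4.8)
The plan is the standard discrete energy argument for BDF2, adapted to the coupled system and to the explicit treatment of the interface term. First I would test \eqref{schemeBDF} with $\vec{\mathbf{v}}=\vm{u}^{n+1}$ and $q=p^{n+1}$; the second equation then makes $b(\mathbf{u}^{n+1},p^{n+1})=0$, so the pressure drops out. For the BDF2 difference quotient I would invoke the $\langle\langle\cdot,\cdot\rangle\rangle$-version of the $G$-matrix identity recalled in \S\ref{sec:3}, writing $\langle\langle\frac{3\vm{u}^{n+1}-4\vm{u}^n+\vm{u}^{n-1}}{2\Delta t},\vm{u}^{n+1}\rangle\rangle = \frac{1}{2\Delta t}\big(|\mathbf{w}_{n+1}|_G^2-|\mathbf{w}_n|_G^2\big)+\frac{1}{4\Delta t}\|\delta\vm{u}^{n+1}\|_S^2$ with $\mathbf{w}_{n+1}=[\vm{u}^n,\vm{u}^{n+1}]^T$, and I would \emph{keep} the $\frac{1}{4\Delta t}\|\delta\vm{u}^{n+1}\|_S^2$ term, which is the crucial dissipation. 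For $a(\vm{u}^{n+1},\vm{u}^{n+1})$ I would use coercivity \eqref{ineq2}. The key algebraic simplification is that $2\vm{u}^n-\vm{u}^{n-1}=\vm{u}^{n+1}-\delta\vm{u}^{n+1}$ and $a_\Gamma$ is skew-symmetric (so $a_\Gamma(\vm{w},\vm{w})=0$): combining the explicitly-treated term $-\widetilde{a}_\Gamma(2\vm{u}^n-\vm{u}^{n-1},\vm{u}^{n+1})$ with the implicit stabilization $a_{st}(\vm{u}^{n+1},\vm{u}^{n+1})$, the latter cancels and the whole coupling contribution collapses to $\widetilde{a}_\Gamma(\delta\vm{u}^{n+1},\vm{u}^{n+1})$ on the right-hand side. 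The forcing term $\langle\langle\langle\vec{\mathbf{f}}^{n+1},\vm{u}^{n+1}\rangle\rangle\rangle$ is handled routinely via Cauchy--Schwarz, Poincar\'e and Young, at the cost of a small fixed fraction of $a(\vm{u}^{n+1},\vm{u}^{n+1})$ and a $\|\vec{\mathbf{f}}^{n+1}\|_{(\mathbf{W})'}^2$ term.

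The heart of the proof is controlling $\widetilde{a}_\Gamma(\delta\vm{u}^{n+1},\vm{u}^{n+1})$. By Lemma~\ref{Lemma1} it is $\le C_{ct}\|\delta\vm{u}^{n+1}\|_\Gamma\|\vm{u}^{n+1}\|_\Gamma$; I would then apply the \emph{sharp} trace inequality $\|\cdot\|_\Gamma\le C_{tr}\|\cdot\|^{1/2}\|\nabla\cdot\|^{1/2}$ of \eqref{Inq:trace} to both factors and split the product $(\|\delta\vm{u}^{n+1}\|\,\|\vm{u}^{n+1}\|)^{1/2}(\|\nabla\delta\vm{u}^{n+1}\|\,\|\nabla\vm{u}^{n+1}\|)^{1/2}$ by Young's inequality with a parameter $\lambda>0$. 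The gradient half yields $\tfrac{C_{ct}C_{tr}^2}{2\lambda}\|\nabla\delta\vm{u}^{n+1}\|\,\|\nabla\vm{u}^{n+1}\|$; using $\|\nabla\delta\vm{u}^{n+1}\|\le\|\nabla\vm{u}^{n+1}\|+2\|\nabla\vm{u}^{n}\|+\|\nabla\vm{u}^{n-1}\|$ and choosing $\lambda$ large but fixed (comparable to $C_{ct}C_{tr}^2/C_a$, \emph{independent of $\Delta t$}), the $\|\nabla\vm{u}^{n+1}\|^2$ part is absorbed into coercivity and only fixed multiples of $\|\nabla\vm{u}^n\|^2$ and $\|\nabla\vm{u}^{n-1}\|^2$ survive, to be dominated after summation. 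The $L^2$ half, $\sim\lambda\|\delta\vm{u}^{n+1}\|\,\|\vm{u}^{n+1}\|$, is split once more by Young with a $\Delta t$-dependent weight $\mu\sim\Delta t^{-1}$: the $\|\delta\vm{u}^{n+1}\|^2$ piece (coefficient $\sim\lambda\mu$) is absorbed into the $\frac{1}{4\Delta t}\|\delta\vm{u}^{n+1}\|_S^2$ dissipation using \eqref{eq:normS}, and the remaining piece comes with coefficient $\sim\lambda^2\Delta t$ on $\|\vm{u}^{n+1}\|^2\le C|\mathbf{w}_{n+1}|_G^2$ --- exactly the form a discrete Gronwall lemma can absorb.

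Assembling these bounds and summing over $n=1,\ldots,N-1$ with $N\Delta t=T$, the $G$-norms telescope, the low-level $\|\nabla\vm{u}^k\|^2$ remainders are swallowed by the $\sum_k C_a\|\nabla\vm{u}^k\|^2$ from coercivity (by the fixed choice of $\lambda$), and multiplying through by $\Delta t$ puts the estimate in the form $|\mathbf{w}_N|_G^2\le |\mathbf{w}_1|_G^2 + C\Delta t\sum_n|\mathbf{w}_n|_G^2 + C\Delta t\sum_n\|\vec{\mathbf{f}}^{n+1}\|_{(\mathbf{W})'}^2$. The discrete Gronwall inequality then gives, with $C(T)$ independent of $\Delta t$, a bound of the type $\max_{1\le n\le N}\|\vm{u}^n\|^2+\Delta t\sum_n\|\nabla\vm{u}^{n+1}\|^2\le C(T)\big(\|\vm{u}^0\|^2+\|\vm{u}^1\|^2+\Delta t\sum_n\|\vec{\mathbf{f}}^{n+1}\|_{(\mathbf{W})'}^2\big)$, which is precisely unconditional stability on $(0,T]$. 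The one genuinely delicate point is the interface estimate in the previous paragraph: since the dissipation $\|\nabla\vm{u}^{n+1}\|^2$ lives at the new time level while the coupling is extrapolated from the two previous levels, one must both exploit the $\delta\vm{u}^{n+1}$ dissipation produced by the $G$-identity and keep every ``bad'' constant independent of $\Delta t$ --- that independence is exactly what makes the scheme unconditionally, rather than merely conditionally, stable.
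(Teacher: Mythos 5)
Your overall energy framework (testing with $\vm{u}^{n+1}$, the $G$-norm identity, coercivity, Gronwall) matches the paper's, but your treatment of the interface term is the one the paper reserves for the \emph{long-time} result (Theorem~\ref{Longstab:BDF2}), not the one it uses for this theorem, and the difference is not cosmetic. By rewriting the coupling as $\widetilde{a}_\Gamma(\delta\vm{u}^{n+1},\vm{u}^{n+1})$ you place the increment $\delta\vm{u}^{n+1}$ --- a new-time-level quantity --- inside the trace estimate, and you must then absorb its $L^2$ part into the dissipation $\frac{1}{4\Delta t}\|\delta\vm{u}^{n+1}\|_S^2$. Your $\Delta t$-dependent Young weight $\mu\sim\Delta t^{-1}$ does handle the $\|\delta\vm{u}^{n+1}\|^2$ piece without a restriction, but the partner term it produces, $\sim\lambda^2\Delta t\,\|\vm{u}^{n+1}\|^2\le C\Delta t\,|\mathbf{w}_{n+1}|_G^2$, sits at the \emph{current} level of the recursion. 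Your telescoped inequality therefore contains $C\Delta t\,|\mathbf{w}_N|_G^2$ on the right-hand side, and moving it to the left (the implicit form of the discrete Gronwall lemma) requires $C\Delta t<1$. That is a genuine, if mild, time-step restriction depending on the problem constants, so as written you prove stability for $\Delta t\le\Delta t_0$ rather than the unconditional statement claimed.

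The paper avoids this entirely by \emph{not} converting the extrapolated argument into $\delta\vm{u}^{n+1}$: it estimates $\widetilde{a}_\Gamma(-2\vm{u}^{n}+\vm{u}^{n-1},\vm{u}^{n+1})$ directly, applying the sharp trace inequality to the first factor, so that the resulting $L^2$ quantity $\|{-2\vm{u}^{n}+\vm{u}^{n-1}}\|^2\le C|\vec{\mathbf{w}}_{n-1}|_G^2$ lives at the \emph{old} levels, while $\vm{u}^{n+1}$ enters only through $\|\nabla\vm{u}^{n+1}\|$, which is absorbed into coercivity; the terms $\frac14\|\delta\vm{u}^{n+1}\|_S^2$ and $a_{st}(\vm{u}^{n+1},\vm{u}^{n+1})$ are simply discarded rather than exploited. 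This yields the explicit recursion $E_n\le(1+C\Delta t)E_{n-1}+C\Delta t\|\vec{\mathbf{f}}^{n+1}\|^2$, valid for every $\Delta t>0$, with growth $e^{CT}$. To repair your argument with minimal change, either switch to the paper's decomposition of the interface term, or accept the restriction and observe that your version is essentially the proof of Theorem~\ref{Longstab:BDF2} (where the restriction \eqref{timestepBDF} is in fact imposed) specialized to finite time.
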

\begin{proof}
Setting
$\vec{\mathbf{v}}=\vm{u}^{n+1}=\left(\mathbf{u}^{n+1},\phi^{n+1}\right)$ in the BDF2 scheme (\ref{schemeBDF}), we have
$$
\begin{aligned}
 \frac{1}{\Delta t}\big\langle\big\langle D\vm{u}^{n+1},\vm{u}^{n+1}\big\rangle\big\rangle&+a({\vm{u}}^{n+1},{\vm{u}}^{n+1})
+a_{st}(\delta \vm{u}^{n+1},\vm{u}^{n+1})\\
&=\big\langle\big\langle\big\langle\vec{\mathbf{f}}^{n+1},\vec{{\mathbf{u}}}^{n+1}\big\rangle\big\rangle\big\rangle-a_\Gamma(2\vm{u}^{n}-\vm{u}^{n-1},\vm{u}^{n+1}).
\end{aligned}
$$
From \eqref{Gnormid} and the skew-symmetry of $a_{\Gamma}(\cdot,\cdot)$, we obtain
\begin{equation}\label{ineq1}
\begin{aligned}
\half|\vec{\mathbf{w}}_n|^2_G-&\half|\vec{\mathbf{w}}_{n-1}|^2_G+  \frac{1}{4}{\|\delta \vm{u}^{n+1}\|_S^2} +\Delta
t a({\vm{u}}^{n+1},{\vm{u}}^{n+1}) +\Delta
ta_{st}(\vm{u}^{n+1},\vm{u}^{n+1})\\
& =\Delta t\Big(
\big\langle\big\langle\big\langle\vec{\mathbf{f}}^{n+1},\vec{{\mathbf{u}}}^{n+1}\big\rangle\big\rangle\big\rangle+
\widetilde{a}_\Gamma(-2\vm{u}^{n}+\vm{u}^{n-1},\vm{u}^{n+1})\Big),
\end{aligned}
\end{equation}
where $\vec{\mathbf{w}}_{n}=[\vm{u}^{n+1}, \vm{u}^n]^T$. Also, from the definition of the bilinear form $\widetilde{a}_{st}(\cdot,\cdot)$, Lemma \ref{Lemma1}, the trace inequality, and Young's inequality, we have
\begin{equation}\label{BDF:interface1}
\begin{aligned}
\widetilde{a}_{\Gamma}(-2\vm{u}^{n}&+\vm{u}^{n-1},\vm{u}^{n+1})
  \le C_{ct}  \|-2\vm{u}^{n}+\vm{u}^{n-1}\|_\Gamma\|\vm{u}^{n+1}\|_\Gamma \\
&\le C_{ct}C^2_{tr} \|-2\vm{u}^{n}+\vm{u}^{n-1}\|^{\half}\|-2\nabla\vm{u}^{n}+\nabla\vm{u}^{n-1}\|^{\half}\|\nabla\vm{u}^{n+1}\|\\
&\le C_{ct}C^2_{tr} \|-2\vm{u}^{n}+\vm{u}^{n-1}\|^{\half}\|\nabla\vm{u}^{n+1}\|\left(\sqrt{2}\|\nabla\vm{u}^{n}\|^{\half}+\|\nabla\vm{u}^{n-1}\|^{\half}\right)\\
&\le\frac{C_1}{2} |\vec{\mathbf{w}}_{n-1}|^2+\frac{C_a }{6}\|\nabla\vm{u}^{n+1}\|^2+\frac{C_a }{3}\|\nabla\vm{u}^{n}\|^2\\
&\qquad+\frac{C_2}{2} |\vec{\mathbf{w}}_{n-1}|^2+\frac{C_a }{6}\|\nabla\vm{u}^{n+1}\|^2+\frac{C_a }{6}\|\nabla\vm{u}^{n-1}\|^2.
\end{aligned}
\end{equation}
For the forcing term, we have
\begin{equation}\label{forcingterm}
  \big\langle\big\langle\big\langle\vec{\mathbf{f}}^{n+1},\vec{{\mathbf{u}}}^{n+1}\big\rangle\big\rangle\big\rangle
 \le\frac{C_3}{2} \|\vec{\mathbf{f}}^{n+1}\|^2+\frac{C_a}{6 {C^2_P}}\left\|{\vm{u}^{n+1}}\right\|^2.
\end{equation}
After we discard the nonnegative terms $ \|\delta \vm{u}^{n+1}\|^2_S $ and $a_{st}(\vm{u}^{n+1},\vm{u}^{n+1})$, noting that $\|\nabla\vm{u}^{n+1}\|\ge\frac{1}{C_P}\|\vm{u}^{n+1}\|$, and using \eqref{BDF:interface1} and \eqref{forcingterm}, \eqref{ineq1} becomes
$$
\begin{aligned}
|\vec{\mathbf{w}}_n|^2_G&+C_a\Delta t \|\nabla \vm{u}^{n+1}\|^2\le C_3\|\vec{\mathbf{f}}^{n+1}\|^2\Delta t\\
& +(1+(C_1+C_2)\Delta t)|\vec{\mathbf{w}}_{n-1}|^2_G+\frac{2C_a\Delta t}{3}
\|\nabla \vm{u}^{n}\|^2+\frac{C_a\Delta t}{3} \|\nabla \vm{u}^{n-1}\|^2.
\end{aligned}
$$
Next, by adding $\frac{C_a\Delta t}{3}\|\nabla \vm{u}^{n}\|^2$ to both sides of this inequality, we deduce
$$
\begin{aligned}
&E_n+\frac{C_a\Delta t^2(C_1+C_2)}{(1+(C_1+C_2)\Delta t)} \|\nabla \vm{u}^{n+1}\|^2+\frac{C_a\Delta t^2(C_1+C_2)}{3(1+(C_1+C_2)\Delta t)} \|\nabla \vm{u}^{n}\|^2\\
&\qquad\le C_3\|\vec{\mathbf{f}}^{n+1}\|^2\Delta t + (1+(C_1+C_2)\Delta t)E_{n-1}\\
&\qquad\le e^{(C_1+C_2)T}E_0+\frac{C_3}{C_1+C_2}e^{(C_1+C_2)T}\max_n\|\vec{\mathbf{f}}^{n+1}\|^2,
\end{aligned}
$$
where
$$
E_{n}=|\vec{\mathbf{w}}_n|^2_G+\frac{C_a\Delta t}{(1+(C_1+C_2)\Delta t)} \|\nabla \vm{u}^{n+1}\|^2+
\frac{C_a\Delta t}{3(1+(C_1+C_2)\Delta t)} \|\nabla \vm{u}^{n}\|^2.
$$
Thus, the unconditional stability of the BDF2 scheme is proved.
\end{proof}

\subsubsection{Unconditional stability of the AMB2 scheme}\label{sec:312}

We introduce the parameters
\begin{equation}\label{alpha-beta}
\begin{aligned}
&\alpha_1=\Big|\frac{3}{2}-2\alpha\Big|, \qquad \alpha_2=\Big|\alpha-\half\Big|, \qquad
\\&
\beta_3= \alpha_1+ \alpha_2, \qquad \beta_1=2\alpha- \beta_3, \qquad \beta_2=\frac12(\beta_1+\beta_3).
\end{aligned}
\end{equation}

\begin{theorem}\label{theorem32}
Let $T>0$ be any fixed time and let $1/2<\alpha<1$. Then, the {\em AMB2} scheme \eqref{schemeAMB} is unconditionally stable in $(0,T]$.
\end{theorem}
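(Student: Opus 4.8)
The plan is to follow the architecture of the proof of Theorem~\ref{thm31}, with the one-step difference quotient $(\vm u^{n+1}-\vm u^n)/\Delta t$ and the weighted operator $D_\alpha$ playing the roles that the BDF difference operator and the $G$-matrix played there. First I would test \eqref{schemeAMB} with $\vec{\mathbf v}=D_\alpha\vm u^{n+1}$, whose velocity component is $D_\alpha\mathbf u^{n+1}$; the pressure term then equals $b(D_\alpha\mathbf u^{n+1},D_\alpha p^{n+1})$, which vanishes by the discrete constraint $b(D_\alpha\mathbf u^{n+1},q)=0$. This leaves
\[
\frac1{\Delta t}\langle\langle\vm u^{n+1}-\vm u^n,D_\alpha\vm u^{n+1}\rangle\rangle+a(D_\alpha\vm u^{n+1},D_\alpha\vm u^{n+1})+a_{st}(D_\alpha\vm u^{n+1},D_\alpha\vm u^{n+1})=\langle\langle\langle\vec{\mathbf f}^{n+\frac12},D_\alpha\vm u^{n+1}\rangle\rangle\rangle-\widetilde a_\Gamma\Big(\tfrac32\vm u^n-\tfrac12\vm u^{n-1},D_\alpha\vm u^{n+1}\Big).
\]

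The algebraic heart of the argument --- the analogue of \eqref{Gnormid} for this method --- is the identity
\[
\langle\langle\vm u^{n+1}-\vm u^n,D_\alpha\vm u^{n+1}\rangle\rangle=\mathcal E_n-\mathcal E_{n-1}+\frac{2\alpha-1}{4}\,\|\delta\vm u^{n+1}\|_S^2,\qquad\mathcal E_n:=\half\|\vm u^{n+1}\|_S^2+\frac{2\alpha-1}{4}\|\vm u^{n+1}-\vm u^n\|_S^2,
\]
obtained by expanding $D_\alpha\vm u^{n+1}=\alpha\vm u^{n+1}+(\tfrac32-2\alpha)\vm u^n+(\alpha-\tfrac12)\vm u^{n-1}$ and using polarization for $\langle\langle\cdot,\cdot\rangle\rangle$. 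The hypothesis $\alpha>\tfrac12$ is precisely what makes $\mathcal E_n$ a nonnegative quantity equivalent to $\|\vm u^{n+1}\|_S^2+\|\vm u^{n+1}-\vm u^n\|_S^2$ (equivalently, positive definiteness of the $2\times2$ ``$G$-matrix'' attached to the Adams--Moulton part). Inserting this identity and multiplying through by $\Delta t$, I would bound $a(D_\alpha\vm u^{n+1},D_\alpha\vm u^{n+1})\ge C_a\|\nabla D_\alpha\vm u^{n+1}\|^2$ via \eqref{ineq2}, keep $a_{st}(\cdot,\cdot)\ge0$, and absorb the forcing term by Cauchy--Schwarz, Poincar\'e, and Young at the cost of $\tfrac14 C_a\Delta t\|\nabla D_\alpha\vm u^{n+1}\|^2$ and $C\Delta t\|\vec{\mathbf f}^{n+\frac12}\|^2$.

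The delicate point --- the main obstacle --- is controlling $\Delta t\,\widetilde a_\Gamma(\tfrac32\vm u^n-\tfrac12\vm u^{n-1},D_\alpha\vm u^{n+1})$ with no restriction on $\Delta t$. Lemma~\ref{Lemma1}, the trace inequalities \eqref{Inq:trace}, and the generalized Young inequality produce a bound of the shape $\tfrac14 C_a\Delta t\|\nabla D_\alpha\vm u^{n+1}\|^2+C\Delta t(\|\vm u^n\|_S^2+\|\vm u^{n-1}\|_S^2)+C\Delta t\,\|\nabla(\tfrac32\vm u^n-\tfrac12\vm u^{n-1})\|^2$; the first term is absorbed on the left and the second is harmless for the Gr\"onwall step, but the last is problematic because, unlike in Theorem~\ref{thm31} where testing with $\vm u^{n+1}$ gave $\|\nabla\vm u^{n+1}\|^2$ directly, here coercivity only controls $\|\nabla D_\alpha\vm u^{n+1}\|^2$. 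To close the gradient ``budget'' one uses the identity $\tfrac32\vm u^n-\tfrac12\vm u^{n-1}=D_\alpha\vm u^{n+1}-\alpha\,\delta\vm u^{n+1}$ together with the skew-symmetry $a_\Gamma(\vm w,\vm w)=0$ (so that $\widetilde a_\Gamma(D_\alpha\vm u^{n+1},D_\alpha\vm u^{n+1})=-a_{st}(D_\alpha\vm u^{n+1},D_\alpha\vm u^{n+1})$ only adds stabilization) to reduce the interface term to $\alpha\,\widetilde a_\Gamma(\delta\vm u^{n+1},D_\alpha\vm u^{n+1})$, and then re-balances the previous-level gradient contributions against the coercive term and the nonnegative remainder $\tfrac{2\alpha-1}{4}\|\delta\vm u^{n+1}\|_S^2$ from the $G$-identity, so that what is left can be carried in a $\Delta t$-weighted term of the Gr\"onwall functional exactly as the gradient terms were carried in the quantity $E_n$ of the proof of Theorem~\ref{thm31}. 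It is this balancing that forces the parameters $\alpha_1,\alpha_2,\beta_1,\beta_2,\beta_3$ of \eqref{alpha-beta} and the restriction $1/2<\alpha<1$ ($\alpha>\tfrac12$ for the positivity of $\mathcal E_n$, and the full range to make the gradient bookkeeping close).

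Finally, collecting the above and rearranging as in Theorem~\ref{thm31}, I would arrive at a functional $E_n$ --- built from $\mathcal E_n$ and $\Delta t$-weighted gradient terms --- satisfying $E_n\le(1+C\Delta t)E_{n-1}+C\Delta t\max_k\|\vec{\mathbf f}^{k+\frac12}\|^2$; iterating gives $E_n\le e^{CT}\big(E_0+C\max_k\|\vec{\mathbf f}^{k+\frac12}\|^2\big)$ for $t_n=n\Delta t\le T$, and since $E_n$ dominates $\|\vm u^{n+1}\|_S^2\sim\|\vm u^{n+1}\|^2$ by \eqref{eq:normS}, this is the asserted unconditional stability on $(0,T]$. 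To summarize, the one genuinely new difficulty compared to the BDF2 case is that coercivity here bounds $D_\alpha\vm u^{n+1}$ in $H^1$ rather than $\vm u^{n+1}$, so the extra gradient terms created by the explicitly lagged interface coupling must be routed through the algebraic identities relating $D_\alpha\vm u^{n+1}$, $\tfrac32\vm u^n-\tfrac12\vm u^{n-1}$ and $\delta\vm u^{n+1}$, and this is what pins down the admissible parameter range.
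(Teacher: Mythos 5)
Your route is genuinely different from the paper's: the paper tests \eqref{schemeAMB} with $\vec{\mathbf{v}}=\vm{u}^{n+1}$, not with $D_\alpha\vm{u}^{n+1}$. That choice makes the time-difference term telescope via $2(a-b,a)=\|a\|^2-\|b\|^2+\|a-b\|^2$, and --- this is the decisive advantage --- it lets the implicit term be split by Cauchy--Schwarz as $2a(D_\alpha\vm{u}^{n+1},\vm{u}^{n+1})\ge\beta_1\|\vm{u}^{n+1}\|_a^2-\alpha_1\|\vm{u}^{n}\|_a^2-\alpha_2\|\vm{u}^{n-1}\|_a^2$ (inequality \eqref{goodtermAMB}), so coercivity delivers $\|\nabla\vm{u}^{n+1}\|^2$ \emph{directly} with positive weight $\beta_1=2\alpha-\beta_3$; the hypothesis $\half<\alpha<1$ enters exactly there, through $\beta_1>\beta_2>\beta_3>0$ in \eqref{alpha-beta}. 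The lagged interface term is then absorbed exactly as in the BDF2 proof and the Gr\"onwall functional carries $\Delta t$-weighted copies of $\|\vm{u}^{j}\|_a^2$ and $\|\nabla\vm{u}^{j}\|^2$. Your test function buys cleaner algebra at the front end --- your telescoping identity for $\langle\langle\vm{u}^{n+1}-\vm{u}^{n},D_\alpha\vm{u}^{n+1}\rangle\rangle$ is correct (it follows from $D_\alpha\vm{u}^{n+1}=\half(\vm{u}^{n+1}+\vm{u}^{n})+(\alpha-\half)\delta\vm{u}^{n+1}$), and the cancellation of $a_{st}$ via $\frac32\vm{u}^{n}-\half\vm{u}^{n-1}=D_\alpha\vm{u}^{n+1}-\alpha\,\delta\vm{u}^{n+1}$ is tidy --- but it pays at the back end: coercivity now controls only $\|\nabla D_\alpha\vm{u}^{n+1}\|^2$, and the terms $\Delta t\|\nabla\vm{u}^{j}\|^2$ spawned by Young's inequality on the interface term must be routed through $\alpha\|\nabla\vm{u}^{n+1}\|\le\|\nabla D_\alpha\vm{u}^{n+1}\|+\alpha_1\|\nabla\vm{u}^{n}\|+\alpha_2\|\nabla\vm{u}^{n-1}\|$. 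This is the one step you assert rather than carry out, and it is not automatic: with the crude bound $\|\nabla\vm{u}^{n+1}\|^2\le\frac{3}{\alpha^2}(\|\nabla D_\alpha\vm{u}^{n+1}\|^2+\alpha_1^2\|\nabla\vm{u}^{n}\|^2+\alpha_2^2\|\nabla\vm{u}^{n-1}\|^2)$ the recursion for the $\Delta t$-weighted gradient part of the energy fails to contract for $\alpha$ near either endpoint of $(\half,1)$; one needs the sharp weighted Young inequality, which reduces the closure condition to $\beta_3<\alpha$ --- true precisely on $(\half,1)$ --- so your plan can be completed, but only with that extra care. The paper's choice of test function sidesteps the issue entirely, which is why its proof is shorter.
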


\begin{proof}
Setting $\vec{\mathbf{v}}=\vm{u}^{n+1}=\left(\mathbf{u}^{n+1},\phi^{n+1}\right)$
in (\ref{schemeAMB}), we deduce
\begin{equation}\label{AMB2:scheme2}
\begin{aligned}
&\big\langle\big\langle\frac{\vm{u}^{n+1}-\vm{u}^n}{\Delta
t},\vm{u}^{n+1}\big\rangle\big\rangle+a(D_\alpha{\vm{u}}^{n+1},\vm{u}^{n+1}) + a_{st}(D_\alpha{\vm{u}}^{n+1},\vm{u}^{n+1})\\
&\quad=\langle\langle\langle\vec{\mathbf{f}}^{n+\frac{1}{2}},\vm{u}^{n+1}\rangle\rangle\rangle
-a_\Gamma\Big(\frac{3}{2}\vm{u}^{n}-\frac{1}{2}\vm{u}^{n-1},\vm{u}^{n+1}\Big)
+a_{st}\Big(\frac{3}{2}\vm{u}^{n}-\frac{1}{2}\vm{u}^{n-1},\vm{u}^{n+1}\Big).
\end{aligned}
\end{equation}
Combining the two $a_{st}(\cdot, \cdot)$ terms and using the basic equality $2(a-b)a=|a|^2-|b|^2+|a-b|^2$, we have
\begin{equation}\label{weakformAMB}
\begin{aligned}
&\frac{1}{\Delta t}(\|\vm{u}^{n+1}\|_S^2-\|\vm{u}^{n}\|_S^2+\|\vm{u}^{n+1}-\vm{u}^{n}\|_S^2) +2a\left(D_\alpha{\vm{u}}^{n+1},\vm{u}^{n+1}\right)
\\&\quad=2(\vec{\mathbf{f}}^{n+\frac{1}{2}},\vm{u}^{n+1})
 -2a_\Gamma\left(\frac{3}{2}\vm{u}^{n}-\frac{1}{2}\vm{u}^{n-1},\vm{u}^{n+1}\right)  -2\alpha
a_{st}(\delta \vm{u}^{n+1},\vm{u}^{n+1}).
\end{aligned}
\end{equation}
Note that $\alpha>\alpha_1+\alpha_2$ provided $\frac{1}{2}<\alpha<1$.
Therefore, $\beta_1=2\alpha- \beta_3= 2\alpha - (\alpha_1+\alpha_2)>\alpha$ and hence $\beta_1>\beta_2>\beta_3$ when $\half <\alpha<1$. By the Cauchy-Schwarz inequality, we then have
\begin{equation}\label{goodtermAMB}
\begin{aligned}
&2  a\left(D_\alpha{\vm{u}}^{n+1},\vm{u}^{n+1}\right) \ge 2 \alpha a(\vm{u}^{n+1}, \vm{u}^{n+1}) -
\alpha_1(a(\vm{u}^{n+1},\vm{u}^{n+1}) +a(\vm{u}^{n}, \vm{u}^{n})) \\
&\qquad\qquad -\alpha_2(a(\vm{u}^{n+1},\vm{u}^{n+1})+a(\vm{u}^{n-1},\vm{u}^{n-1}))\\
  &\qquad = \beta_1a(\vm{u}^{n+1},\vm{u}^{n+1}) -
\alpha_1a(\vm{u}^{n},\vm{u}^{n})- \alpha_2a(\vm{u}^{n-1},\vm{u}^{n-1}).
\end{aligned}
\end{equation}
Similarly as for \eqref{BDF:interface1}, for the interface coupling term, there exists a constant $C_5$ such that
\begin{equation}\label{badterm}
\begin{aligned}
&-2 a_\Gamma\Big(\frac{3}{2}\vm{u}^{n}-\frac{1}{2}\vm{u}^{n-1},\vm{u}^{n+1}\Big)-2\alpha
a_{st}(-2\vm{u}^{n}+\vm{u}^{n-1},\vm{u}^{n+1})\\
&\qquad\le \frac{C_a(\beta_1-\beta_2)}{4} \|\nabla\vm{u}^{n}\|^{2}+2C_5 \|\vm{u}^{n}\|_S^{2}
+\frac{C_a(\beta_1-\beta_2)}{4}\|\nabla\vm{u}^{n+1}\|^2\\
&\qquad\qquad +\frac{C_a(\beta_1-\beta_2)}{8} \|\nabla\vm{u}^{n-1}\|^{2}
 +C_5 \|\vm{u}^{n-1}\|_S^{2}+\frac{C_a(\beta_1-\beta_2)}{8}\|\nabla\vm{u}^{n+1}\|^2.
\end{aligned}
\end{equation}
For the forcing term, there exists a constant $C_6$ such that
\begin{equation}\label{forcingAMB}
2\Delta
t\big\langle\big\langle\big\langle\vec{\mathbf{f}}^{n+\frac{1}{2}},\vm{u}^{n+1}\big\rangle\big\rangle\big\rangle
\le C_6\Delta
t\|\vec{\mathbf{f}}^{n+\frac{1}{2}}\|^2+\frac{C_a (\beta_1-\beta_2)}{8}\Delta
t \|\nabla \vm{u}^{n+1}\|^2.
\end{equation}
Substituting \eqref{goodtermAMB}--\eqref{forcingAMB} into \eqref{weakformAMB} yields
\begin{equation}\label{shorttimeAMB1}
\begin{aligned}
&\|\vm{u}^{n+1}\|_S^2
+\frac{C_a(\beta_1-\beta_2)}{2}\Delta
t\left\|\nabla\vm{u}^{n+1}\right\|^2+\Delta
t\beta_2\|\vm{u}^{n+1}\|^2_a\\ \
&\quad + \|\vm{u}^{n+1}-\vm{u}^{n}\|_S^2 +2\alpha\Delta t a_{st}\left(\vm{u}^{n+1},\vm{u}^{n+1}\right)\\
 & \le C_6\|\vec{\mathbf{f}}^{n+\frac{1}{2}}\|^2\Delta t+(1+2C_5\Delta t)\|\vm{u}^{n}\|_S^2 +
 C_5\Delta t\|\vm{u}^{n-1}\|_S^2+\Delta
t\alpha_1\|\vm{u}^{n}\|^2_a
\\&\quad+\Delta
t\alpha_2 \|\vm{u}^{n-1}\|^2_a+\frac{C_a(\beta_1-\beta_2)}{4}\Delta
t \|\nabla\vm{u}^{n}\|^2+\frac{C_a(\beta_1-\beta_2)}{8}\Delta
t\|\nabla\vm{u}^{n-1}\|^2.
\end{aligned}
\end{equation}
Define the energy
$$
\begin{aligned}
&E_n= \|\vm{u}^{n}\|_S^2+\frac{C_5\Delta t}{1+3C_5\Delta t}\|\vm{u}^{n-1}\|_S^2+\frac{\beta_3\Delta
t}{1+3C_5\Delta t}\|\vm{u}^{n}\|^2_a+\frac{\Delta
t\alpha_2}{1+3C_5\Delta t}\|\vm{u}^{n-1}\|^2_a\\
&\qquad\qquad +\frac{3C_a(\beta_1-\beta_2)}{8(1+3C_5\Delta t)}\Delta
t\|\nabla\vm{u}^{n}\|^2+\frac{C_a(\beta_1-\beta_2)}{8(1+3C_5\Delta t)}\Delta
t\|\nabla\vm{u}^{n-1}\|^2.
\end{aligned}
$$
Then, discarding the last two positive terms on the left-hand side of \eqref{shorttimeAMB1} and adding $C_5\Delta t \|\vm{u}^{n}\|_S^2+\alpha_2\Delta t\|\vm{u}^{n}\|^2_a+\frac{C_a(\beta_1-\beta_2)}{8}\Delta t\|\nabla\vm{u}^{n}\|^2$ to both sides, we obtain
$$
\begin{aligned}
&E_{n+1}+\frac{3C_5\Delta t^2}{1+3C_5\Delta t}\|\vm{u}^{n}\|_S^2 +\frac{(\beta_2-\beta_3)\Delta
t+3C_5\beta_3\Delta t^2}{1+3C_5\Delta t}\|\vm{u}^{n+1}\|^2_a\\
&\qquad +\frac{3C_5\alpha_2\Delta
t^2}{1+3C_5\Delta t}\|\vm{u}^{n}\|^2_a
+\frac{3C_5C_a(\beta_1-\beta_2)\Delta t^2}{8(1+3C_5\Delta t)}\|\nabla\vm{u}^{n}\|^2\\
&\qquad +\frac{C_a(\beta_1-\beta_2)\Delta
t+12C_5C_a(\beta_1-\beta_2)\Delta t^2}{8(1+3C_5\Delta t)}\|\nabla\vm{u}^{n+1}\|^2\\
& \le   C_6\|\vec{\mathbf{f}}^{n+\frac{1}{2}}\|^2\Delta t +(1+3C_5\Delta t)E_n.
\end{aligned}
$$
Discarding all terms on the left-hand side, all of which are positive, except for $E_{n+1}$, we are left with
$$
E_{n+1}\le  C_6 \Delta t\max_{n}\|\vec{\mathbf{f}}^{n+\frac{1}{2}}\|^2  +(1+3C_5\Delta t)E_n.
$$
Then, by recursion,
$$
E_n \le e^{3C_5 T} E_1+ \frac{ C_6  }{3C_5}e^{3C_5 T}\max_{i}\|\vec{\mathbf{f}}^{i+\frac{1}{2}}\|^2
$$
so that the proof of the theorem is complete.
\end{proof}

\subsection{Long-time stability of the the BDF2 and AMB2 schemes}\label{sec:32}

\subsubsection{Uniform in time estimates for the BDF2 scheme}\label{sec:321}

\textcolor{white}{xxxx}

\begin{theorem}\label{Longstab:BDF2}
Assume that $\vm{f}\in L^\infty(L^2(\Omega))$ and that the time-step restriction \eqref{timestepBDF} is satisfied. Then, the solution to the {\em BDF2} scheme \eqref{schemeBDF} is uniformly bounded for all time.
Specifically, there exist $0<\lambda_1<1$, $\lambda_2<\infty$, and $E_0\ge 0$ such that
$$
\|\vm{u}^{n}\|^2\le C_u \lambda_1^n  E_0+\lambda_2.
$$
\end{theorem}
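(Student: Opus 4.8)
\emph{Proof idea.} The plan is to run the energy argument of Theorem~\ref{thm31} but much more frugally, so as to extract a genuine contraction factor in place of the factor $1+C\Delta t$ obtained there. First I would test \eqref{schemeBDF} with $\vec{\mathbf v}=\vm{u}^{n+1}$ and, as in the derivation of \eqref{ineq1}, apply the $G$-norm identity to the BDF2 quotient and use the skew-symmetry of $a_\Gamma(\cdot,\cdot)$; then, rewriting the extrapolated interface contribution through the algebraic identity $-2\vm{u}^{n}+\vm{u}^{n-1}=\delta\vm{u}^{n+1}-\vm{u}^{n+1}$, the $a_{st}(\cdot,\cdot)$ terms cancel and \eqref{ineq1} collapses to
\[
\tfrac12|\vec{\mathbf{w}}_n|_G^2-\tfrac12|\vec{\mathbf{w}}_{n-1}|_G^2+\tfrac14\|\delta\vm{u}^{n+1}\|_S^2+\Delta t\,a(\vm{u}^{n+1},\vm{u}^{n+1})=\Delta t\,\langle\langle\langle\vec{\mathbf{f}}^{n+1},\vm{u}^{n+1}\rangle\rangle\rangle+\Delta t\,\widetilde{a}_\Gamma(\delta\vm{u}^{n+1},\vm{u}^{n+1}).
\]
The decisive point is that every reference to the old time levels is now collected into the single term $\widetilde{a}_\Gamma(\delta\vm{u}^{n+1},\vm{u}^{n+1})$, and that I keep both the full coercive term $\Delta t\,a(\vm{u}^{n+1},\vm{u}^{n+1})\ge C_a\Delta t\|\nabla\vm{u}^{n+1}\|^2$ and the central-difference term $\tfrac14\|\delta\vm{u}^{n+1}\|_S^2$, the latter having simply been discarded in the unconditional-stability proof.

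The hard part is the estimate of $\widetilde{a}_\Gamma(\delta\vm{u}^{n+1},\vm{u}^{n+1})$. Using Lemma~\ref{Lemma1}, the trace inequality \eqref{Inq:trace} on both arguments, the bound $\|\nabla\delta\vm{u}^{n+1}\|\le\|\nabla\vm{u}^{n+1}\|+2\|\nabla\vm{u}^{n}\|+\|\nabla\vm{u}^{n-1}\|$, and a multi-factor Young inequality, I would split this term so that its $\|\delta\vm{u}^{n+1}\|^2$-factor is absorbed into $\tfrac14\|\delta\vm{u}^{n+1}\|_S^2$ (via \eqref{eq:normS}), its $\|\vm{u}^{n+1}\|^2$-factor (via Poincar\'e) and $\|\nabla\vm{u}^{n+1}\|^2$-factor are absorbed into a fixed fraction of $C_a\Delta t\|\nabla\vm{u}^{n+1}\|^2$, and its $\|\nabla\vm{u}^{n}\|^2$- and $\|\nabla\vm{u}^{n-1}\|^2$-factors appear with a coefficient of order $\Delta t^{2}$. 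The mechanism is that the Young weight one must place on $\|\delta\vm{u}^{n+1}\|^2$ so that it fits beneath $\tfrac14\|\delta\vm{u}^{n+1}\|_S^2$ is of size $O(\Delta t^{-1})$, whence by the Young balance the complementary weight on the old gradients is of size $O(\Delta t)$; combined with the overall $\Delta t$ prefactor this yields $O(\Delta t^{2})$. The outcome is
\[
|\vec{\mathbf{w}}_n|_G^2+\hat c_1\Delta t\|\nabla\vm{u}^{n+1}\|^2\le|\vec{\mathbf{w}}_{n-1}|_G^2+C_2\Delta t^{2}\|\nabla\vm{u}^{n}\|^2+C_2\Delta t^{2}\|\nabla\vm{u}^{n-1}\|^2+C_3\|\vec{\mathbf{f}}^{n+1}\|^2\Delta t
\]
with fixed positive constants $\hat c_1,C_2,C_3$; the crucial gain over Theorem~\ref{thm31} is that the coefficient of $|\vec{\mathbf{w}}_{n-1}|_G^2$ is exactly $1$, because the $\delta$-rewriting leaves behind no $\|\vm{u}^{n}\|^2$ or $\|\vm{u}^{n-1}\|^2$ term.

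It remains to close the recursion. I would set $E_n=|\vec{\mathbf{w}}_n|_G^2+\hat c_1\Delta t\|\nabla\vm{u}^{n+1}\|^2+\gamma\Delta t\|\nabla\vm{u}^{n}\|^2$, so that the level-$(n{+}1)$ gradient term above is exactly accounted for, and use the Poincar\'e inequality together with the norm equivalences in the form $|\vec{\mathbf{w}}_{n-1}|_G^2\le K(\|\nabla\vm{u}^{n}\|^2+\|\nabla\vm{u}^{n-1}\|^2)$ to spend a small part of the $\gamma\Delta t\|\nabla\vm{u}^{n}\|^2$ and $\gamma\Delta t\|\nabla\vm{u}^{n-1}\|^2$ already present in $E_{n-1}$. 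This gives $E_n\le\lambda_1E_{n-1}+(1-\lambda_1)\lambda_2$ with $\lambda_1=1-\Theta(\Delta t)\in(0,1)$, provided one can pick $\gamma$ with $C_2\Delta t<\gamma<\hat c_1-C_2\Delta t$; the existence of such a $\gamma$ is precisely the time-step restriction \eqref{timestepBDF} (which reads $\Delta t<\hat c_1/(2C_2)$ after unwinding constants), and then $\lambda_2$ comes out as a fixed multiple of $\|\vec{\mathbf{f}}\|_{L^\infty(L^2)}^2$. Iterating yields $E_n\le\lambda_1^nE_0+\lambda_2$, and since $\|\vm{u}^n\|^2\le\|\vec{\mathbf{w}}_n\|^2\le C_u|\vec{\mathbf{w}}_n|_G^2\le C_uE_n$ by the $G$-norm equivalence, the claim follows. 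The genuine obstacle throughout is the interface estimate of the second paragraph: the crude bound used for unconditional stability is not good enough, and it is precisely the recovery of $O(\Delta t^{2})$ smallness --- bought with the hitherto-unused central-difference dissipation, at the price of a time-step restriction --- that turns stability into long-time stability.
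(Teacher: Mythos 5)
Your proposal is correct and follows essentially the same route as the paper's proof of Theorem~\ref{Longstab:BDF2}: the two decisive moves --- collapsing the extrapolated interface term into $\widetilde{a}_\Gamma(\delta\vm{u}^{n+1},\vm{u}^{n+1})$ via the skew-symmetry of $a_\Gamma$ (the paper's \eqref{BDF:agamma}) and spending the previously discarded dissipation $\tfrac14\|\delta\vm{u}^{n+1}\|_S^2$ to control it --- are exactly the paper's, as is the closing weighted-energy argument with Poincar\'e and the $G$-norm equivalence yielding $(1+C_7\Delta t)E_n\le E_{n-1}+C\Delta t\|\vm{f}^{n+1}\|^2$. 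The only divergence is bookkeeping inside the Young step: the paper keeps the old-gradient terms at $O(\Delta t)$ with coefficient $\tfrac{C_a}{8}$ strictly below the retained $C_a$ and lets the restriction \eqref{timestepBDF} absorb an $O(\Delta t)\|\delta\vm{u}^{n+1}\|_S^2$ remainder, whereas you push the old gradients down to $O(\Delta t^2)$ (precisely the split the paper itself uses later in Corollary~\ref{Longstab:BDF2-1}) and let the time-step restriction enter when closing the recursion; both distributions of the Young weights work.
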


\begin{proof}
Recall that $a_{\Gamma}(\vm{u}^{n+1},\vm{u}^{n+1})=0$. Therefore, by Lemma \ref{Lemma1},
\begin{equation}\label{BDF:agamma}
\begin{aligned}
a_\Gamma(-2\vm{u}^{n}&+\vm{u}^{n-1},\vm{u}^{n+1}) -a_{st}(\delta \vm{u}^{n+1},\vm{u}^{n+1})\\& =
\widetilde{a}_\Gamma(\delta \vm{u}^{n+1},\vm{u}^{n+1})
\le  C_{ct} \left\|\delta \vm{u}^{n+1}\right\|_\Gamma\left\|\vm{u}^{n+1}\right\|_\Gamma.
\end{aligned}
\end{equation}
The trace and Poincar\'{e} inequalities imply
\begin{equation}\label{BDF:agamma2}
\begin{aligned}
&\left\|\delta \vm{u}^{n+1}\right\|_\Gamma\left\|\vm{u}^{n+1}\right\|_\Gamma  \le  C_{tr}^2 \|\delta \vm{u}^{n+1}\|^\half\|\nabla \delta \vm{u}^{n+1}\|^\half
\|\nabla\vm{u}^{n+1}\| \\
&\quad\le C_S^\half C_{tr}^2 \|\delta \vm{u}^{n+1}\|^\half_S(\|\nabla\vm{u}^{n+1}\|^\half
+\sqrt{2}\|\nabla \vm{u}^{n}\|^\half+\|\nabla \vm{u}^{n-1}\|^\half)
\|\nabla\vm{u}^{n+1}\| .
\end{aligned}
\end{equation}
The three terms on the right-hand side can be bounded using Young's inequalities:
$$
\begin{aligned}
 \|\delta \vm{u}^{n+1}\|^\half_S\|\nabla\vm{u}^{n+1}\|^\frac{3}{2}
&\le \frac{\varepsilon}{8}\|\nabla \vm{u}^{n+1}\|^2 + \frac{{54}}{\varepsilon^3}\|\delta \vm{u}^{n+1}\|^2_S\\
\sqrt{2}\|\delta \vm{u}^{n+1}\|^\half_S\|\nabla\vm{u}^{n}\|^\frac{1}{2}\|\nabla\vm{u}^{n+1}\|&\le \frac{\varepsilon}{8}\|\nabla \vm{u}^{n+1}\|^2
+\frac{\varepsilon}{16}\|\nabla \vm{u}^{n}\|^2+\frac{64}{\varepsilon^3}\|\delta \vm{u}^{n+1}\|^2_S\\
\|\delta \vm{u}^{n+1}\|^\half_S\|\nabla\vm{u}^{n-1}\|^\frac{1}{2}\|\nabla\vm{u}^{n+1}\|&\le \frac{\varepsilon}{8}\|\nabla \vm{u}^{n+1}\|^2
+\frac{\varepsilon}{16}\|\nabla \vm{u}^{n-1}\|^2+\frac{16}{\varepsilon^3}\|\delta \vm{u}^{n+1}\|^2_S.
\end{aligned}
$$
Then, setting $\varepsilon=\varepsilon_0=\frac{C_a}{C_s^\half C_{ct}C_{tr}^2}$, we deduce from these three inequalities, \eqref{BDF:agamma}, and \eqref{BDF:agamma2} that
\begin{equation}\label{actBDF}
\begin{aligned}
 &\widetilde{a}_\Gamma(\delta \vm{u}^{n+1},\vm{u}^{n+1})\le \frac{3C_a}{8}\left\|\nabla\vm{u}^{n+1}\right\|^2 \\
&\qquad +\frac{C_a}{16}\left\|\nabla\vm{u}^{n}\right\|^2+\frac{C_a}{16}\left\|\nabla\vm{u}^{n-1}\right\|^2
+\frac{{134}C_s^2C_{ct}^4C_{tr}^8}{C_a^3}\left\|\delta \vm{u}^{n+1}\right\|^2_S.
\end{aligned}
\end{equation}
The forcing term can be bounded as
\begin{equation}\label{forcingBDFlong}
 \big\langle\big\langle\big\langle \vm{f}^{n+1},{\mathbf{u}}^{n+1}\big\rangle\big\rangle\big\rangle
 \le \frac{2{C^2_P}}{C_a}\|\vm{f}^{n+1}\|^2+\frac{C_a }{8{C^2_P}}\left\|{\vm{u}^{n+1}}\right\|^2.
\end{equation}
Combining \eqref{ineq1} and \eqref{ineq2} with \eqref{actBDF} and \eqref{forcingBDFlong}, we obtain
$$
\begin{aligned}
&|\vec{\mathbf{w}}_n|^2_G+C_a\Delta
t\left\|\nabla\vm{u}^{n+1}\right\|^2+
\left[\frac{1}{2}- \frac{{268}C_s^2C_{ct}^4C_{tr}^8}{C_a^3}\Delta
t\right]\left\|\delta \vm{u}^{n+1}\right\|_S^2\\
&\qquad\le\frac{4{C^2_P}\Delta t}{C_a}\|\vm{f}^{n+1}\|^2+|\vec{\mathbf{w}}_{n-1}|^2_G+\frac{C_a\Delta
t}{8}\left\|\nabla\vm{u}^{n}\right\|^2+\frac{C_a\Delta
t}{8}\left\|\nabla\vm{u}^{n-1}\right\|^2.   \label{BDF:rec1}
\end{aligned}
$$
If the time-step restriction
\begin{equation}\label{timestepBDF}
\Delta
t\le\frac{C_a^3}{{536}C_s^2C_{ct}^4C_{tr}^8}.
\end{equation}
is satisfied, this leads to
$$
\begin{aligned}
 &|\vec{\mathbf{w}}_n|^2_G+C_a\Delta
t\left\|\nabla\vm{u}^{n+1}\right\|^2 \\&\quad\le
\frac{4{C^2_P}\Delta t}{C_a}\|\vm{f}^{n+1}\|^2 +|\vec{\mathbf{w}}_{n-1}|^2_G+\frac{C_a\Delta
t}{8}\left\|\nabla\vm{u}^{n}\right\|^2
+\frac{C_a\Delta
t}{8}\left\|\nabla\vm{u}^{n-1}\right\|^2.
\end{aligned}
$$
Adding $\frac{3C_a\Delta t}{8}\left\|\nabla\vm{u}^{n}\right\|^2$ to both sides of the above inequality,  we obtain
$$
\begin{aligned}
&|\vec{\mathbf{w}}_n|^2_G+C_a\Delta
t\left\|\nabla\vm{u}^{n+1}\right\|^2+\frac{3C_a\Delta
t}{8}\left\|\nabla\vm{u}^{n}\right\|^2\\ &\quad\le
\frac{4{C^2_P}\Delta t}{C_a}\|\vm{f}^{n+1}\|^2 +|\vec{\mathbf{w}}_{n-1}|^2_G+\frac{C_a\Delta
t}{2}\left\|\nabla\vm{u}^{n}\right\|^2+\frac{C_a\Delta
t}{8}\left\|\nabla\vm{u}^{n-1}\right\|^2
\end{aligned}
$$
which is equivalent to
\begin{equation}\label{enineq}
E_{n}+\frac{C_a}{2}\Delta
t\left\|\nabla\vm{u}^{n+1}\right\|^2+\frac{C_a}{4}\Delta
t\left\|\nabla\vm{u}^{n}\right\|^2\le E_{n-1}+\frac{4{C^2_P}\Delta t}{C_a}\|\vm{f}^{n+1}\|^2,
\end{equation}
where
$E_{n}=|\vec{\mathbf{w}}_{n}|^2_G+\frac{C_a\Delta
t}{2}\left\|\nabla\vm{u}^{n+1}\right\|^2+\frac{C_a\Delta
t}{8}\left\|\nabla\vm{u}^{n}\right\|^2$.

Utilizing the Poincar\'e inequality and the equivalence of the $G$-norm and the $L^2$-norm, we have
$$
\frac{C_a}{2}\left\|\nabla\vm{u}^{n+1}\right\|^2+\frac{C_a}{4}\left\|\nabla\vm{u}^{n}\right\|^2
\ge  \frac{C_a}{4}\left\|\nabla\vm{u}^{n+1}\right\|^2+\frac{C_a}{8}\left\|\nabla\vm{u}^{n}\right\|^2
+\frac{C_l^2C_a}{8C_P^2}|\vm{w}_n|_G^2.
$$
Therefore, setting $C_7=\min\{\frac{C_l^2C_a}{8C_P^2},\frac{1}{2\Delta t}\}$, we have from \eqref{enineq} that
$$
(1+C_7\Delta t)E_{n}\le  E_{n-1}+\frac{4{C^2_P}\Delta t}{C_a}\|\vm{f}^{n+1}\|^2.
$$
A simple induction argument leads to
$$
E_{n}\le \Big(\frac{1}{1+C_7\Delta t}\Big)^n E_0+\frac{4{C^2_P}(1+C_7\Delta t)}{C_aC_7}\max_{i}\|\vm{f}^{i}\|^2.
$$
Recall that $\|\vm{u}^n\|\le C_u  E_{n}$. Hence, the theorem is proved with $\lambda_1=\frac{1}{1+C_7\Delta t}$ and $\lambda_2=C_u\frac{4{C^2_P}(1+C_7\Delta t)}{C_aC_7}\max_{i}\|\vm{f}^{i}\|^2$.
\end{proof}

The following corollary is used in the analysis of the fully-discrete BDF2 scheme; see \S\ref{sec:51}.

\begin{corollary}\label{Longstab:BDF2-1}
In addition to the assumptions of Theorem~{\em\ref{Longstab:BDF2}}, assume that the second time-step restriction \eqref{timestepBDF2} is satisfied. Then,
\begin{equation}
\|\vm{u}^{n}\|^2\le C \lambda_1^{n-2}\big( \|\vm{u}^0\|^2+\|\vm{u}^1\|^2+\Delta t^2 \|\nabla\vm{u}^0\|^2+\Delta t^2 \|\nabla\vm{u}^1\|^2\big)+C\lambda_2.
\end{equation}
\end{corollary}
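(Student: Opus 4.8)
The plan is to re-run the energy recursion established in the proof of Theorem~\ref{Longstab:BDF2}, but starting it two steps in, so that the corollary reduces to a single estimate on $E_2$. Recall from that proof that $\|\vm{u}^n\|^2\le C_u E_{n-1}$ and $(1+C_7\Delta t)E_m\le E_{m-1}+\frac{4C_P^2\Delta t}{C_a}\|\vm{f}^{m+1}\|^2$ for every $m\ge1$. Iterating the latter from $m=3$ to $m=n-1$ and using $\|\vm{u}^n\|^2\le C_u E_{n-1}$ gives, for $n\ge3$, $\|\vm{u}^n\|^2\le C\lambda_1^{\,n-3}E_2+C\lambda_2$; since $\lambda_1^{\,n-3}=(1+C_7\Delta t)\lambda_1^{\,n-2}$ with $1+C_7\Delta t$ bounded above (by \eqref{timestepBDF}), this is of the form $C\lambda_1^{\,n-2}E_2+C\lambda_2$, and the cases $n=2,3$ will be read off from the step estimates below. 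So it remains to bound $E_2$. Writing $E_2=|\vec{\mathbf{w}}_2|_G^2+\frac{C_a\Delta t}{2}\|\nabla\vm{u}^3\|^2+\frac{C_a\Delta t}{8}\|\nabla\vm{u}^2\|^2$ and $|\vec{\mathbf{w}}_2|_G^2\le C(\|\vm{u}^2\|^2+\|\vm{u}^3\|^2)$, it suffices to bound each of $\|\vm{u}^2\|^2$, $\|\vm{u}^3\|^2$, $\Delta t\|\nabla\vm{u}^2\|^2$, $\Delta t\|\nabla\vm{u}^3\|^2$ by $C\big(\|\vm{u}^0\|^2+\|\vm{u}^1\|^2+\Delta t^2\|\nabla\vm{u}^0\|^2+\Delta t^2\|\nabla\vm{u}^1\|^2\big)+C\lambda_2$.

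I would obtain these by analyzing the first two implicit steps ($n=1,2$) of \eqref{schemeBDF} on their own. Testing \eqref{schemeBDF} with $\vec{\mathbf{v}}=\vm{u}^{n+1}$, using $b(\vm{u}^{n+1},p^{n+1})=0$, the coercivity \eqref{ineq2}, the nonnegativity of $a_{st}$, the norm equivalence \eqref{eq:normS}, and Young's inequality on the cross terms in the discrete time difference and on the forcing term, one is left with
\[
\frac{c_0}{\Delta t}\|\vm{u}^{n+1}\|^2+C_a\|\nabla\vm{u}^{n+1}\|^2\ \le\ \frac{C}{\Delta t}\big(\|\vm{u}^{n}\|^2+\|\vm{u}^{n-1}\|^2\big)+C\Delta t\,\|\vm{f}^{n+1}\|^2+\big|\widetilde a_\Gamma\big(2\vm{u}^{n}-\vm{u}^{n-1},\vm{u}^{n+1}\big)\big|
\]
for some $c_0>0$. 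The decisive point is the last, explicit interface term. By Lemma~\ref{Lemma1} it is at most $C_{ct}\|2\vm{u}^{n}-\vm{u}^{n-1}\|_\Gamma\,\|\vm{u}^{n+1}\|_\Gamma$; I would bound the factor $\|\vm{u}^{n+1}\|_\Gamma\le C_{tr}\|\nabla\vm{u}^{n+1}\|$ and hide the resulting $\|\nabla\vm{u}^{n+1}\|^2$ on the left, but bound the \emph{other} factor with the sharp trace inequality $\|w\|_\Gamma\le C_{tr}(\|w\|\,\|\nabla w\|)^{1/2}$ applied to $w=2\vm{u}^{n}-\vm{u}^{n-1}$, followed by a Young split \emph{calibrated to $\Delta t$}, namely $\|w\|\,\|\nabla w\|\le\Delta t\|\nabla w\|^2+\frac1{4\Delta t}\|w\|^2$. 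This turns the coupling term into an $O(\Delta t)(\|\nabla\vm{u}^{n}\|^2+\|\nabla\vm{u}^{n-1}\|^2)$ piece plus an $O(\Delta t^{-1})(\|\vm{u}^{n}\|^2+\|\vm{u}^{n-1}\|^2)$ piece, the latter carrying exactly the $\Delta t^{-1}$ weight already present on the right above. Absorbing the $\|\vm{u}^{n+1}\|^2$ and $\|\nabla\vm{u}^{n+1}\|^2$ remainders on the left — this is where the second time-step restriction \eqref{timestepBDF2} enters, keeping the $O(1)$ constants beneath the $O(\Delta t^{-1})$ ones — and then multiplying the whole inequality by $\Delta t$ yields, for $n=1,2$,
\[
\|\vm{u}^{n+1}\|^2+\Delta t\|\nabla\vm{u}^{n+1}\|^2\ \le\ C\big(\|\vm{u}^{n}\|^2+\|\vm{u}^{n-1}\|^2\big)+C\Delta t^2\big(\|\nabla\vm{u}^{n}\|^2+\|\nabla\vm{u}^{n-1}\|^2\big)+C\Delta t^2\|\vm{f}^{n+1}\|^2 .
\]
For $n=1$ this is the desired bound for $\|\vm{u}^2\|^2$ and $\Delta t\|\nabla\vm{u}^2\|^2$. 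For $n=2$ it bounds $\|\vm{u}^3\|^2$ and $\Delta t\|\nabla\vm{u}^3\|^2$ through $\|\vm{u}^2\|^2,\|\vm{u}^1\|^2,\Delta t^2\|\nabla\vm{u}^2\|^2,\Delta t^2\|\nabla\vm{u}^1\|^2$; substituting the $n=1$ bounds and using $\Delta t\le1$ (so $\Delta t\,\|\vm{u}^0\|^2\le\|\vm{u}^0\|^2$ and $\Delta t^3\le\Delta t^2$) reduces everything to $\vm{u}^0,\vm{u}^1$. Feeding these four bounds into $E_2$ and then into the recursion completes the argument.

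The one genuinely subtle step is the treatment of the explicit interface coupling just described. A naive estimate of $\widetilde a_\Gamma(2\vm{u}^{n}-\vm{u}^{n-1},\vm{u}^{n+1})$ attaches an $O(1)$ coefficient (or, with the ordinary trace bound, still only $O(\Delta t)$) to $\|\nabla\vm{u}^0\|^2$ and $\|\nabla\vm{u}^1\|^2$, which after the recursion would give merely $\Delta t\,\|\nabla\vm{u}^j\|^2$ — insufficient for the second-order fully-discrete error estimate of \S\ref{sec:5}, where the startup gradients are only $O(1)$. It is the combination of the sharp $L^2$--$H^1$ trace inequality applied to \emph{both} factors of the coupling term, the $\Delta t$-weighted Young split, and the factor $\Delta t$ gained by dividing through the $\Delta t^{-1}\|\vm{u}^{n+1}\|^2$ produced by the implicit time difference that upgrades this contribution to the required $\Delta t^2$.
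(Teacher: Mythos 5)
Your proposal is correct and rests on the same key idea as the paper's own proof: re-estimate the explicit interface coupling using the sharp $L^2$--$H^1$ trace inequality together with a $\Delta t$-calibrated Young split, so that the startup gradients enter the bound on $E_2$ with weight $\Delta t^2$ while the startup $L^2$ norms absorb the $\Delta t^{-1}$ weight, and then chain $E_2$ into the decay recursion of Theorem~\ref{Longstab:BDF2}. The only differences are cosmetic — the paper derives a second recursion valid for all $n$ by bounding $\widetilde a_\Gamma(\delta\vm{u}^{n+1},\vm{u}^{n+1})$ (which is where \eqref{timestepBDF2} is actually needed, to absorb an $O(\Delta t)\|\nabla\vm{u}^{n+1}\|^2$ remainder) and then specializes to $n=2$, whereas you estimate the first two steps directly, in which case the fixed-fraction Young splits make \eqref{timestepBDF2} unnecessary for the absorption you describe, contrary to your parenthetical remark; this is a mischaracterization of where the hypothesis is used, not a gap.
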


\begin{proof}
For the interface term \eqref{BDF:agamma}, we can derive another estimate.
From \eqref{Inq:trace} and noting that $\|\nabla\delta \vm{u}^{n+1}\|^\half \le \sqrt{2}(\|\nabla\vm{u}^{n+1}\|^\half+ \|\nabla\vm{u}^{n}\|^\half+\|\nabla\vm{u}^{n-1}\|^\half)$ and $\|\vm{u}^{n+1}\|^\half \le \sqrt{2}(\|\delta\vm{u}^{n+1}\|^\half+ \|\vm{u}^{n}\|^\half+\|\vm{u}^{n-1}\|^\half)$, we have
$$
\begin{aligned}
&\widetilde{a}_{\Gamma}(\delta \vm{u}^{n+1},\vm{u}^{n+1})
\\&\quad
\le\widetilde{C}\|\delta \vm{u}^{n+1}\|^\half_S
\sum_{j=n-1}^{n+1}\|\nabla \vm{u}^{j}\|^\half
(\|\delta\vm{u}^{n+1}\|^\half_S+ \|\vm{u}^{n}\|^\half_S+\|\vm{u}^{n-1}\|^\half_S)\|\nabla \vm{u}^{n+1}\|^\half \\
&\quad=
\widetilde{C}\|\delta \vm{u}^{n+1}\|_S
\sum_{j=n-1}^{n+1}\|\nabla \vm{u}^{j}\|^\half \|\nabla \vm{u}^{n+1}\|^\half \\
&\qquad + \widetilde{C} \|\delta \vm{u}^{n+1}\|^\half_S
\sum_{j=n-1}^{n+1}\|\nabla \vm{u}^{j}\|^\half   ( \|\vm{u}^{n}\|^\half_S+\|\vm{u}^{n-1}\|^\half_S)\|\nabla \vm{u}^{n+1}\|^\half
:=S_1+S_2,
\end{aligned}
$$
where $\widetilde{C}=2C_sC_{ct}C_{tr}^2$. The terms in the right-hand side can be bounded by Young's inequalities:
$$
\begin{aligned}
S_1&\le \sum_{j=n-1}^{n+1} \left( \frac{1}{24\Delta t}\|\delta \vm{u}^{n+1}\|^2_S+
 {3\widetilde{C}^2\Delta t} (\|\nabla \vm{u}^{n+1}\|^2 +  \|\nabla \vm{u}^{j}\|^2)\right)\\
&=  \frac{1}{8\Delta t}\|\delta \vm{u}^{n+1}\|^2_S+
 {3\widetilde{C}^2\Delta t}  (\|\nabla \vm{u}^{n-1}\|^2+\|\nabla \vm{u}^{n}\|^2+2\|\nabla \vm{u}^{n+1}\|^2)
\end{aligned}
$$
and
$$
\begin{aligned}
  S_2 &\le  \sum_{j=n-1}^{n+1}\sum_{k=n-1}^{n}\Big(\frac{1}{48\Delta t}\|\delta \vm{u}^{n+1}\|^2_S+
   \frac{C_a}{24}\|\nabla \vm{u}^{n+1}\|^2+ \frac{9\widetilde{C}^2}{C_a} \|\vm{u}^{k}\|^2_S
  +\frac{\widetilde{C}^2\Delta t}{2}  \|\nabla \vm{u}^{j}\|^2\Big)\\
 &=  \frac{1}{8\Delta t}\|\delta \vm{u}^{n+1}\|^2_S +  \frac{C_a}{4}\|\nabla \vm{u}^{n+1}\|^2 + \sum_{k=n-1}^n\frac{27\widetilde{C}^2}{C_a} \|\vm{u}^{k}\|^2_S
  +\sum_{j=n-1}^{n+1} {\widetilde{C}^2\Delta t}   \|\nabla \vm{u}^{j}\|^2.
\end{aligned}
$$
Now, if we require
\begin{equation}
\Delta t\le \frac{3C_a}{56\widetilde{C}^2}=\frac{3C_a}{112C_s^2C_{ct}^2C_{tr}^4}, \label{timestepBDF2}
\end{equation}
the interface term can then be bounded by
$$
\begin{aligned}
  & \widetilde{a}_{\Gamma}(\delta \vm{u}^{n+1},\vm{u}^{n+1}) \le
   \frac{1}{4\Delta t}\|\delta \vm{u}^{n+1}\|^2_S
   \\&\quad+  \frac{5C_a}{8}\|\nabla \vm{u}^{n+1}\|^2
   +\frac{27\widetilde{C}^2}{C_a} (\|\vm{u}^{n}\|^2_S+\|\vm{u}^{n-1}\|^2_S)  + 4\widetilde{C}^2\Delta t  (\|\nabla \vm{u}^{n}\|^2+\|\nabla \vm{u}^{n-1}\|^2)
\end{aligned}
$$
which leads to another recursion formula:
$$
\begin{aligned}
&|\vec{\mathbf{w}}_n|^2_G+\frac{C_a\Delta t}{2}
\left\|\nabla\vm{u}^{n+1}\right\|^2 \le\frac{4{C^2_P}\Delta t}{C_a}\|\vm{f}^{n+1}\|^2\\&\qquad
+|\vec{\mathbf{w}}_{n-1}|^2_G +\frac{54\widetilde{C}^2\Delta t}{C_a} (\|\vm{u}^{n}\|^2_S+\|\vm{u}^{n-1}\|^2_S)
   + 8\widetilde{C}^2\Delta t^2  (\|\nabla \vm{u}^{n}\|^2+\|\nabla \vm{u}^{n-1}\|^2).
\end{aligned}
$$
Using this relationship, it is easy to verify that
$$
  E_n\le C\Delta t(\|\vm{f}^{n+1}\|^2 +\|\vm{f}^{n}\|^2)+ C|V_{n-2}|_G^2 +C\Delta t^2 (\|\nabla \vm{u}^{n-1}\|^2+\|\nabla \vm{u}^{n-2}\|^2).
$$
Specifically, for $n=2$, we have
\begin{equation}\label{Longstab:E2}
  E_2\le C\Delta t(\|\vm{f}^{2}\|^2 +\|\vm{f}^{1}\|^2)+ C|V_{0}|_G^2 +C\Delta t^2 (\|\nabla \vm{u}^{0}\|^2+\|\nabla \vm{u}^{1}\|^2).
\end{equation}
Combing with Theorem \ref{Longstab:BDF2} completes the proof.
\end{proof}

\subsubsection{Uniform in time estimates for the AMB2 scheme}\label{sec322}

We start with the following estimate.

\begin{lemma}\label{AMB2:Egamma}
Let
$$
{\mathcal E}_\Gamma=-2 a_\Gamma\left(\frac{3}{2}\vm{u}^{n}-\frac{1}{2}\vm{u}^{n-1},\vm{u}^{n+1}\right)-2\alpha
a_{st}\left(\delta \vm{u}^{n+1},\vm{u}^{n+1}\right).
$$
Then, with $\beta_1$ and $\beta_2$ defined in \eqref{alpha-beta}, we have the bound
\begin{equation}\label{newbound}
\begin{aligned}
  &|{\mathcal E}_{\Gamma}|\le \frac{4C_a(\beta_1-\beta_2)}{9} \|\nabla\vm{u}^{n+1}\|^2+\frac{2C_a(\beta_1-\beta_2)}{9}
  \|\nabla\vm{u}^{n}\|^2
 \\&\quad+\frac{C_a(\beta_1-\beta_2)}{9}
  \|\nabla\vm{u}^{n-1}\|^2
+(C_8+C_9)
\|\vm{u}^{n+1}-\vm{u}^{n}\|_S^2
+2C_9\|\vm{u}^{n}-\vm{u}^{n-1}\|_S^2.
\end{aligned}
\end{equation}
\end{lemma}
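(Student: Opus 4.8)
The plan is to decompose $\mathcal{E}_\Gamma$ into the genuine interface-coupling piece and the stabilization piece, and to estimate each by the same trace-plus-Young machinery already used in \eqref{BDF:interface1} and \eqref{actBDF}, but now routing the ``extra'' $L^2$ factors onto the first-order differences $\|\vm{u}^{n+1}-\vm{u}^n\|_S$ and $\|\vm{u}^n-\vm{u}^{n-1}\|_S$ rather than onto the $G$-norms, since for the AMB2 scheme it is those increments (not a $\delta\vm{u}$ term) that appear with a good sign on the left of \eqref{shorttimeAMB1}. Concretely, first I would write $\tfrac32\vm{u}^n-\tfrac12\vm{u}^{n-1} = \vm{u}^{n+1} - (\vm{u}^{n+1}-\vm{u}^n) - \tfrac12(\vm{u}^n-\vm{u}^{n-1}) + \tfrac12 \delta\vm{u}^{n+1}$; since $a_\Gamma(\vm{u}^{n+1},\vm{u}^{n+1})=0$ by skew-symmetry, and $\delta\vm{u}^{n+1} = (\vm{u}^{n+1}-\vm{u}^n)-(\vm{u}^n-\vm{u}^{n-1})$, the first argument of $a_\Gamma$ is reduced to a linear combination of the two increments $\vm{u}^{n+1}-\vm{u}^n$ and $\vm{u}^n-\vm{u}^{n-1}$ alone. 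Likewise $\delta\vm{u}^{n+1}$ in the $a_{st}$ term is exactly that same combination of increments.

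Next I would apply Lemma~\ref{Lemma1} to bound $|\mathcal{E}_\Gamma| \le 2C_{ct}(1+\alpha)\big(\|\vm{u}^{n+1}-\vm{u}^n\|_\Gamma + \|\vm{u}^n-\vm{u}^{n-1}\|_\Gamma\big)\|\vm{u}^{n+1}\|_\Gamma$, then invoke the trace inequality \eqref{Inq:trace} in the sharp form $\|\vm{w}\|_\Gamma \le C_{tr}\|\vm{w}\|^{1/2}\|\nabla\vm{w}\|^{1/2}$ on each increment and $\|\vm{u}^{n+1}\|_\Gamma \le C_{tr}\|\nabla\vm{u}^{n+1}\|$ on the last factor. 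The gradient of an increment $\nabla(\vm{u}^{n+1}-\vm{u}^n)$ is controlled by $\sqrt{2}(\|\nabla\vm{u}^{n+1}\|^{1/2}+\|\nabla\vm{u}^n\|^{1/2})$ in the relevant half-power, exactly as in \eqref{actBDF}, and similarly for $\nabla(\vm{u}^n-\vm{u}^{n-1})$. After using $\|\cdot\| \le C_S\|\cdot\|_S$ to convert the $L^2$ norms of the increments, one is left with a sum of terms of the schematic form $\|\vm{w}\|_S^{1/2}\|\nabla\vm{u}^j\|^{1/2}\|\nabla\vm{u}^{n+1}\|$ with $w$ an increment and $j\in\{n-1,n,n+1\}$. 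Each is split by the three-factor Young inequality stated in the preliminaries, $a^{1/2}b^{1/2}c \le \varepsilon a^2/4 + b^2/(4\varepsilon^3) + \varepsilon c^2/2$, dumping the quadratic in $\|\nabla\vm{u}^j\|$ onto a controllable small multiple of $C_a(\beta_1-\beta_2)$ and the quadratic in $\|\vm{w}\|_S$ onto constants $C_8,C_9$. Collecting the $\|\nabla\vm{u}^{n+1}\|^2$, $\|\nabla\vm{u}^n\|^2$, $\|\nabla\vm{u}^{n-1}\|^2$ contributions and choosing the Young parameters so the totals come out as $\tfrac{4}{9}, \tfrac{2}{9}, \tfrac{1}{9}$ times $C_a(\beta_1-\beta_2)$ respectively gives exactly \eqref{newbound}; the residual increment-norm coefficients are then named $C_8$ (coming from the $\vm{u}^{n+1}-\vm{u}^n$ terms, including those produced when $\delta\vm{u}^{n+1}$ is expanded) and $C_9$ (from the $\vm{u}^n-\vm{u}^{n-1}$ terms), with the factor $2$ on $\|\vm{u}^n-\vm{u}^{n-1}\|_S^2$ accounting for its appearance both directly in $a_\Gamma$ and inside $\delta\vm{u}^{n+1}$ in $a_{st}$.

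The main obstacle is purely bookkeeping: one must keep the $\|\nabla\vm{u}^{n+1}\|^2$ coefficient strictly below $\beta_1-\beta_2$ (so that \eqref{shorttimeAMB1}'s left-hand side retains a positive $\|\nabla\vm{u}^{n+1}\|^2$ after subtraction) while simultaneously keeping the $\|\nabla\vm{u}^n\|^2$ and $\|\nabla\vm{u}^{n-1}\|^2$ coefficients small enough that the downstream recursion closes; the target fractions $\tfrac49,\tfrac29,\tfrac19$ sum to $\tfrac79<1$, which is what makes this possible, and the Young parameter $\varepsilon$ must be chosen (analogously to $\varepsilon_0$ in the BDF2 argument) as a fixed multiple of $C_a$ over the trace/coercivity constants. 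No time-step restriction is needed at this stage because every $\|\vm{u}^{n+1}-\vm{u}^n\|_S^2$-type term is allocated an $O(1)$ rather than $O(1/\Delta t)$ coefficient, so these will later be absorbed by the $\|\vm{u}^{n+1}-\vm{u}^n\|_S^2$ already present on the left of \eqref{shorttimeAMB1} together with a mild restriction introduced in the theorem that uses this lemma.
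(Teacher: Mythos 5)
Your proposal follows essentially the same route as the paper's proof: use the skew-symmetry $a_\Gamma(\vm{u}^{n+1},\vm{u}^{n+1})=0$ to rewrite both the $a_\Gamma$ and the $a_{st}$ contributions in terms of the two increments $\vm{u}^{n+1}-\vm{u}^{n}$ and $\vm{u}^{n}-\vm{u}^{n-1}$, then apply Lemma~\ref{Lemma1}, the sharp trace inequality \eqref{Inq:trace}, the norm equivalence \eqref{eq:normS}, and Young's inequality with parameters tuned so that the gradient terms collect into the $\tfrac{4}{9},\tfrac{2}{9},\tfrac{1}{9}$ multiples of $C_a(\beta_1-\beta_2)$. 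One correction: the decomposition identity you quote is wrong as written --- its right-hand side equals $\tfrac12\vm{u}^{n+1}-\tfrac12\vm{u}^{n}+\vm{u}^{n-1}$, not $\tfrac32\vm{u}^{n}-\tfrac12\vm{u}^{n-1}$; the correct identity is $\tfrac32\vm{u}^{n}-\tfrac12\vm{u}^{n-1}=\vm{u}^{n+1}-(\vm{u}^{n+1}-\vm{u}^{n})+\tfrac12(\vm{u}^{n}-\vm{u}^{n-1})$, which after skew-symmetry gives $-2a_\Gamma(\tfrac32\vm{u}^{n}-\tfrac12\vm{u}^{n-1},\vm{u}^{n+1})=2a_\Gamma(\vm{u}^{n+1}-\vm{u}^{n},\vm{u}^{n+1})-a_\Gamma(\vm{u}^{n}-\vm{u}^{n-1},\vm{u}^{n+1})$, exactly the splitting the paper uses. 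Your stated consequence (reduction to the two increments) and all subsequent steps are then correct and match the paper's argument, including the observation that no time-step restriction is needed at this stage.
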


\begin{proof}
Recall that $a_\Gamma(\cdot,\cdot)$ is skew-symmetric. Therefore,
\begin{equation}\label{AMB2:E}
\begin{aligned}
|{\mathcal E}_{\Gamma}|&\le |2  a_\Gamma\left(\vm{u}^{n+1}-\vm{u}^{n},\vm{u}^{n+1}\right)-
a_\Gamma\left(\vm{u}^{n}-\vm{u}^{n-1},\vm{u}^{n+1}\right)|\\
&\quad+ |-2\alpha
a_{st}\left(\vm{u}^{n+1}-\vm{u}^{n},\vm{u}^{n+1}\right)+2 \alpha
a_{st}\left(\vm{u}^{n}-\vm{u}^{n-1},\vm{u}^{n+1}\right)|\\
&\le 2C_{ct}
\|\vm{u}^{n+1}-\vm{u}^{n}\|_\Gamma\|\vm{u}^{n+1}\|_\Gamma+2C_{ct}
\|\vm{u}^{n}-\vm{u}^{n-1}\|_\Gamma\|\vm{u}^{n+1}\|_\Gamma\\
&\le 2C_{ct}C^2_{tr} \|\vm{u}^{n+1}-\vm{u}^{n}\|^{1/2}\|\nabla\left(\vm{u}^{n+1}-\vm{u}^{n}\right)\|^{1/2}\|\nabla\vm{u}^{n+1}\|\\
&\quad+2C_{ct}C^2_{tr}\|\vm{u}^{n}-\vm{u}^{n-1}\|^{1/2}\|\nabla\left(\vm{u}^{n}-\vm{u}^{n-1}\right)\|^{1/2}\|\nabla\vm{u}^{n+1}\|\\
&\le 2C_{ct}C^2_{tr} \|\vm{u}^{n+1}-\vm{u}^{n}\|^{1/2}\|\nabla\vm{u}^{n+1}\|\left(\|\nabla\vm{u}^{n+1}\|^{1/2}+\|\nabla\vm{u}^{n}\|^{1/2}\right)\\
&\quad+2C_{ct}C^2_{tr} \|\vm{u}^{n}-\vm{u}^{n-1}\|^{1/2}\|\nabla\vm{u}^{n+1}\|\left(\|\nabla\vm{u}^{n}\|^{1/2}+\|\nabla\vm{u}^{n-1}\|^{1/2}\right).
\end{aligned}
\end{equation}
Then, by Young's inequality and \eqref{eq:normS},
$$
\begin{aligned}
|{\mathcal E}_{\Gamma}|&\le \frac{C_a(\beta_1-\beta_2)}{9} \|\nabla\vm{u}^{n+1}\|^2+ C_8 \|\vm{u}^{n+1}-\vm{u}^{n}\|_S^2  \\
&\quad+\frac{C_a(\beta_1-\beta_2)}{9} \|\nabla\vm{u}^{n+1}\|^2+\frac{C_a(\beta_1-\beta_2)}{9} \|\nabla\vm{u}^{n}\|^2+ C_9 \|\vm{u}^{n+1}-\vm{u}^{n}\|_S^2\\
&\quad+\frac{C_a(\beta_1-\beta_2)}{9} \|\nabla\vm{u}^{n+1}\|^2+\frac{C_a(\beta_1-\beta_2)}{9}
\|\nabla\vm{u}^{n}\|^2+C_9 \|\vm{u}^{n}-\vm{u}^{n-1}\|_S^2\\
&\quad+\frac{C_a(\beta_1-\beta_2)}{9} \|\nabla\vm{u}^{n+1}\|^2+\frac{C_a(\beta_1-\beta_2)}{9} \|\nabla\vm{u}^{n-1}\|^2
+C_9 \|\vm{u}^{n}-\vm{u}^{n-1}\|_S^2\\
 &=\frac{4C_a(\beta_1-\beta_2)}{9} \|\nabla\vm{u}^{n+1}\|^2+\frac{2C_a(\beta_1-\beta_2)}{9}
\|\nabla\vm{u}^{n}\|^2\\ &\quad+\frac{ C_a(\beta_1-\beta_2)}{9}\|\nabla\vm{u}^{n-1}\|^2
+(C_8+C_9)
\|\vm{u}^{n+1}-\vm{u}^{n}\|_S^2
 +2C_9\|\vm{u}^{n}-\vm{u}^{n-1}\|_S^2.
\end{aligned}
$$
\end{proof}

\begin{theorem} \label{Longstab:AMB2}
Assume that $1/2<\alpha<1$, $\vm{f}\in L^\infty(L^2(\Omega_f))$, and that the time-step restriction \eqref{timestepAMB} is satisfied.
Then, the  solution to the {\em AMB2} scheme \eqref{schemeAMB} is uniformly bounded for all time. Specifically, there exist  $0<\lambda_3<1$, $\lambda_4<\infty$, and $E_1\ge 0$ such that
$$
\|\vm{u}^{n+1}\|^2\le C_S \lambda_3^n E_1+\lambda_4.
$$
\end{theorem}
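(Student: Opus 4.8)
The plan is to mimic the structure of the proof of Theorem~\ref{Longstab:BDF2}, but starting from the energy identity \eqref{weakformAMB} rather than the $G$-norm identity. First I would test \eqref{schemeAMB} with $\vec{\mathbf{v}}=\vm{u}^{n+1}$ and recombine the two $a_{st}(\cdot,\cdot)$ contributions exactly as in the derivation of \eqref{weakformAMB}, obtaining
$$
\frac{1}{\Delta t}\bigl(\|\vm{u}^{n+1}\|_S^2-\|\vm{u}^{n}\|_S^2+\|\vm{u}^{n+1}-\vm{u}^{n}\|_S^2\bigr)
+2a\bigl(D_\alpha\vm{u}^{n+1},\vm{u}^{n+1}\bigr)
=2\bigl(\vec{\mathbf{f}}^{n+1/2},\vm{u}^{n+1}\bigr)+{\mathcal E}_\Gamma,
$$
with ${\mathcal E}_\Gamma$ exactly the quantity estimated in Lemma~\ref{AMB2:Egamma}. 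For the implicit elliptic term I would keep the lower bound \eqref{goodtermAMB}, namely $2a(D_\alpha\vm{u}^{n+1},\vm{u}^{n+1})\ge\beta_1\|\vm{u}^{n+1}\|_a^2-\alpha_1\|\vm{u}^{n}\|_a^2-\alpha_2\|\vm{u}^{n-1}\|_a^2$; the point of Lemma~\ref{AMB2:Egamma} (as opposed to the cruder \eqref{badterm} used in Theorem~\ref{theorem32}) is that it leaves a genuine dissipative surplus of size $(\beta_1-\beta_2)$ in $\|\nabla\vm{u}^{n+1}\|^2$ that can absorb the interface and forcing terms, while the ``bad'' leftovers are only the differences $\|\vm{u}^{n+1}-\vm{u}^{n}\|_S^2$ and $\|\vm{u}^{n}-\vm{u}^{n-1}\|_S^2$ rather than the norms $\|\vm{u}^{n}\|_S^2$ themselves. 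This is what will make the recursion contractive rather than merely Gronwall-bounded.

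Next I would bound the forcing term by Young's inequality, $2(\vec{\mathbf f}^{n+1/2},\vm{u}^{n+1})\le C_P^2/C_a\,\|\vec{\mathbf f}^{n+1/2}\|^2 + (C_a/\text{(something)})\|\nabla\vm{u}^{n+1}\|^2$, choosing the constant so that the combined coefficient of $\|\nabla\vm{u}^{n+1}\|^2$ coming from ${\mathcal E}_\Gamma$ (which is $4C_a(\beta_1-\beta_2)/9$) plus this forcing contribution stays strictly below the available surplus; here one uses $\beta_1>\beta_2$ (valid since $1/2<\alpha<1$, as noted before \eqref{goodtermAMB}) and multiplies through by $\Delta t$. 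The key structural move is then to define an energy $E_n$ that bundles $\|\vm{u}^{n}\|_S^2$ with $\|\vm{u}^{n-1}\|_S^2$, with $\|\vm{u}^{n}\|_a^2$ and $\|\vm{u}^{n-1}\|_a^2$ (to soak up the $\alpha_1,\alpha_2$ terms), and with $\|\nabla\vm{u}^{n}\|^2$, $\|\nabla\vm{u}^{n-1}\|^2$, all with appropriately tuned positive weights — the natural analogue of the $E_n$ appearing in Theorem~\ref{theorem32}, but re-weighted so that after moving the leftover difference terms $\|\vm{u}^{n+1}-\vm{u}^{n}\|_S^2$ and $\|\vm{u}^{n}-\vm{u}^{n-1}\|_S^2$ to the left they can be dominated by the $\Delta t^{-1}\|\vm{u}^{n+1}-\vm{u}^{n}\|_S^2$ term already present (this imposes the time-step restriction \eqref{timestepAMB}: roughly $\Delta t\le c/(C_8+2C_9)$ for a suitable absolute constant $c$). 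Using the Poincaré inequality $\|\vm{u}^{n+1}\|_S^2\le c_s^{-2}C_P^2 C_a^{-1}\cdot C_a\|\nabla\vm{u}^{n+1}\|^2$ and the norm equivalences \eqref{eq:normS}, a portion of the residual $\|\nabla\vm{u}^{n+1}\|^2$ dissipation controls $\|\vm{u}^{n+1}\|_S^2$ itself, yielding an inequality of the form $(1+C\Delta t)E_{n+1}\le E_n + C\Delta t\,\|\vec{\mathbf f}^{n+1/2}\|^2$.

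Finally, a one-line induction on this contractive recursion gives $E_{n+1}\le\lambda_3^{\,n}E_1 + \lambda_4$ with $\lambda_3=(1+C\Delta t)^{-1}\in(0,1)$ and $\lambda_4 = C'\max_i\|\vec{\mathbf f}^{i+1/2}\|^2$, and since $\|\vm{u}^{n+1}\|^2\le C_S\|\vm{u}^{n+1}\|_S^2\le C_S E_{n+1}$ the stated bound follows. I expect the main obstacle to be the bookkeeping in the second step: one must choose the several Young's-inequality parameters and the weights inside $E_n$ simultaneously so that (i) every term on the right of the one-step inequality is a term of $E_n$ with a coefficient $\le(1+C\Delta t)$ times its coefficient in $E_{n+1}$, (ii) the coefficient of $\|\nabla\vm{u}^{n+1}\|^2$ that survives on the left is still positive after subtracting all the $\|\nabla\vm{u}^{n+1}\|^2$ pieces from ${\mathcal E}_\Gamma$ and the forcing term, and (iii) the leftover $\|\vm{u}^{n+1}-\vm{u}^{n}\|_S^2$ and $\|\vm{u}^{n}-\vm{u}^{n-1}\|_S^2$ terms are absorbed — and only (iii) actually costs a time-step restriction, with the $\alpha$-dependence entering solely through the sign condition $\beta_1>\beta_2>\beta_3$, which is why $1/2<\alpha<1$ is assumed.
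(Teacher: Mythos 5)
Your proposal follows essentially the same route as the paper's proof: test with $\vm{u}^{n+1}$ to get the energy identity \eqref{weakformAMB}, keep the lower bound \eqref{goodtermAMB} for the implicit term, invoke Lemma~\ref{AMB2:Egamma} precisely because it leaves only difference terms $\|\vm{u}^{n+1}-\vm{u}^{n}\|_S^2$, $\|\vm{u}^{n}-\vm{u}^{n-1}\|_S^2$ on the right (enabling a contractive rather than Gronwall-type recursion), absorb these via the time-step restriction \eqref{timestepAMB}, and tune the weights of an energy $E_n$ so that Poincar\'e yields $(1+C\Delta t)E_{n+1}\le E_n+C\Delta t\|\vec{\mathbf f}^{n+1/2}\|^2$. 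The constants and bookkeeping you leave implicit are exactly those the paper fixes ($C_{11}$, $C_{12}$, $C_{13}$ and the restriction $\Delta t<1/(C_8+3C_9)$), so the argument is correct and matches the paper's.
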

\begin{proof} The interface term has been estimated in Lemma \ref{AMB2:Egamma}. The forcing term can be bounded as
\begin{equation}\label{forcingAMB2}
2 \big\langle\big\langle\big\langle\vm{f}^{n+\frac{1}{2}},\vm{u}^{n+1}\big\rangle\big\rangle\big\rangle
\le C_{10}\|\vm{f}^{n+\frac{1}{2}}\|^2+\frac{C_a(\beta_1-\beta_2)}{9} \|\nabla\vm{u}^{n+1}\|^2.
\end{equation}
Combining \eqref{goodtermAMB}, \eqref{newbound}, and \eqref{forcingAMB2}, \eqref{weakformAMB} becomes
\begin{equation}\label{longtimeAMB1}
\begin{aligned}
&\|\vm{u}^{n+1}\|_S^2+\Delta
t\beta_2\|\vm{u}^{n+1}\|^2_a
+\frac{4C_a(\beta_1-\beta_2)}{9}\Delta
t\|\nabla\vm{u}^{n+1}\|^2\\
&\qquad\qquad  +(1-(C_{8}+C_{9})\Delta
t)\|\vm{u}^{n+1}-\vm{u}^{n}\|_S^2\\
 &\qquad\le \|\vm{u}^{n}\|_S^2 +  C_{10}\|\vm{f}^{n+\frac{1}{2}}\|^2\Delta t+\Delta
t\alpha_1\|\vm{u}^{n}\|^2_a+\Delta
t\alpha_2\|\vm{u}^{n-1}\|^2_a\\
&\qquad\qquad+\frac{2C_a(\beta_1-\beta_2)}{9}\Delta
t\|\nabla\vm{u}^{n}\|^2+\frac{C_a(\beta_1-\beta_2)}{9}\Delta
t\|\nabla\vm{u}^{n-1}\|^2\\&
\qquad\qquad +2C_{9}\Delta
t\|\vm{u}^{n}-\vm{u}^{n-1}\|_S^2.
\end{aligned}
\end{equation}
Now add $C_{11}\Delta t\|\vm{u}^{n}\|^2_a+C_{12}\frac{C_a(\beta_1-\beta_2)}{9}\Delta t\|\nabla\vm{u}^{n}\|^2$ to both sides, require that
$$
\beta_2-\alpha_1>C_{11}>\alpha_2, \quad 2>C_{12}>1,
$$
require the time-step restriction
\begin{equation}\label{timestepAMB}
\Delta t<\frac{1}{C_{8}+3C_{9}},
\end{equation}
and set
$$
\begin{aligned}
E_n=&\|\vm{u}^{n}\|_S^2+\Delta
t\left(\alpha_1+C_{11}\right)\|\vm{u}^{n}\|^2_a+(2+C_{12})\frac{C_a(\beta_1-\beta_2)}{9}\Delta
t\|\nabla\vm{u}^{n}\|^2\\
&\qquad+\Delta
t\alpha_2\|\vm{u}^{n-1}\|^2_a+\frac{C_a(\beta_1-\beta_2)}{9}\Delta
t\|\nabla\vm{u}^{n-1}\|^2+2C_{9}\Delta
t \|\vm{u}^{n}-\vm{u}^{n-1}\|_S^2.
\end{aligned}
$$
Then, \eqref{longtimeAMB1} becomes
\begin{equation}\label{longtimeAMB2}
\begin{aligned}
E_{n+1}&+\Delta t\left(\beta_2-\alpha_1-C_{11}\right)\|\vm{u}^{n+1}\|^2_a
+(2-C_{12})\frac{C_a(\beta_1-\beta_2)}{9}\Delta
t\|\nabla\vm{u}^{n+1}\|^2\\
&\qquad+\Delta t\left(C_{11}-\alpha_2\right)\|\vm{u}^{n}\|^2_a  + (C_{12}-1)\frac{C_a(\beta_1-\beta_2)}{9}\Delta
t\|\nabla\vm{u}^{n}\|^2 \\
&\qquad+ (1-(C_{8}+3C_{9})\Delta t)\|\vm{u}^{n+1}-\vm{u}^{n}\|_S^2 \le  C_{10}\|\vm{f}^{n+\frac{1}{2}}\|^2 \Delta t+E_n.
\end{aligned}
\end{equation}
Because there exists a constant $C_{13}>0$ such that
$$
\begin{aligned}
C_{13}\|\vm{u}^{n+1}\|_S  &\le  (2-C_{12})\frac{C_a(\beta_1-\beta_2)}{18}\|\nabla \vm{u}^{n+1}\|
\\
C_{13}\Delta
t^2 \left(\alpha_1+C_{11}\right)&\le \Delta t\left(\beta_2-\alpha_1-C_{11}\right)\\
C_{13}\Delta
t^2\alpha_2&\le\Delta t\left(C_{11}-\alpha_2\right)\\
C_{13}(2+C_{12})\frac{C_a(\beta_1-\beta_2)}{9}\Delta
t^2&\le(2-C_{12})\frac{C_a(\beta_1-\beta_2)}{18}\Delta
t,\\
C_{13}\frac{C_a(\beta_1-\beta_2)}{9}\Delta
t^2&\le(C_{12}-1)\frac{C_a(\beta_1-\beta_2)}{9}\Delta
t,\\
2C_{13}C_{9}\Delta
t^2&\le (1-(C_{8}+3C_{9})\Delta t)
\end{aligned}
$$
we have from \eqref{longtimeAMB2} that
$$
(1+C_{13}\Delta t)E_{n+1}\le E_n + C_{10}\|\vm{f}^{n+\frac{1}{2}}\|^2\Delta t.
$$
Thus, we have
$$
\|\vm{u}^{n+1}\|_S^2\le  E_{n+1}  \le \Big(\frac{1}{1+C_{13}\Delta t}\Big)^n E_1
 +\frac{ C_{10}(1+C_{13}\Delta t)}{C_{13}}\max_i\|\vm{f}^{i+\frac{1}{2}}\|^2.
$$
Setting $\lambda_3=\frac{1}{1+C_{13}\Delta t}$ and $ \lambda_4=\frac{C_{10}(1+C_{13}\Delta t)}{C_{13}}\max_i\|\vm{f}^{i+\frac{1}{2}}\|^2$, by \eqref{eq:normS} the proof is complete.
\end{proof}

\begin{rmk}
Similarly to Corollary~{\rm\ref{Longstab:BDF2-1}}, in the error analysis, $E_1$ can be taken as $C(|\vm{w}_0|_G+\Delta t^2(\|\nabla \vm{u}_0\|^2+\|\nabla \vm{u}_1\|^1))$ in Theorem~{\em\ref{Longstab:AMB2}}.
\end{rmk}

\section{$H^1(\Omega)$ stability of the schemes}\label{sec:4}

The purpose of this section is to prove uniform in time bounds for the solutions to the schemes \eqref{schemeBDF} and \eqref{schemeAMB} with respect to the $H^1(\Omega)$ norm. This additional estimate is needed for the estimation of finite element element errors for for the fluid velocity and hydraulic head with respect to the $H^1(\Omega)$ and for the pressure with respect to the $L^2(\Omega_f)$ norm; see \S\ref{sec:51}.

\subsection{Uniform in time $H^1(\Omega)$ bound of the BDF2 scheme}\label{sec:41}

In this subsection, we assume that the time-step restriction \eqref{timestepBDF} holds. We introduce the notation $\bar\partial \vm{u}^{n+1}=\frac{1}{\Delta t}(\vm{u}^{n+1}-\vm{u}^n)$.

\begin{lemma} \label{lem:BDF:barpartialu}
The first-order discrete time derivative of the {\em BDF2} scheme \eqref{schemeBDF} is uniformly bounded in time. Specifically, we have
\begin{equation}\label{BDF:barpartialu}
   \|\bar{\partial}\vm{u}^{n+1}\|^2\le C \lambda_1^{n} +C\max_i\|\bar{\partial}\vm{f}^{i}\|^2,
\end{equation}
where the positive parameter $\lambda_1<1$ is defined in Theorem~{\em\ref{Longstab:BDF2}}.
\end{lemma}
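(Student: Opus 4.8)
The plan is to differentiate the BDF2 scheme \eqref{schemeBDF} in time, i.e., write down the scheme at level $n+1$ and at level $n$, subtract, and obtain an equation for the discrete-time-difference quantity. Introduce $\vm{U}^{n+1}=\bar{\partial}\vm{u}^{n+1}=\frac{1}{\Delta t}(\vm{u}^{n+1}-\vm{u}^n)$; taking the difference of the two scheme instances, the BDF2 difference operator, the $a(\cdot,\cdot)$, $b(\cdot,\cdot)$, $a_{st}(\cdot,\cdot)$ terms, and the explicit Gear extrapolation $\widetilde{a}_\Gamma$ all commute with the time-differencing (they are linear and time-independent), so $\vm{U}^{n+1}$ satisfies a BDF2-type scheme of the same structure as \eqref{schemeBDF}, with forcing $\bar{\partial}\vm{f}^{n+1}$ in place of $\vm{f}^{n+1}$ and with the explicit interface term $\widetilde{a}_\Gamma(2\vm{U}^n-\vm{U}^{n-1},\vec{\mathbf{v}})$. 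The incompressibility constraint $b(\vm{U}^{n+1},q)=0$ is likewise inherited, so the pressure difference drops out when we test with $\vec{\mathbf{v}}=\vm{U}^{n+1}$ (which is a legitimate test function since $\vm{U}^{n+1}\in\mathbf{W}$ and is discretely divergence-free in the conduit).

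Next I would repeat, verbatim, the energy argument used in the proof of Theorem~\ref{Longstab:BDF2}, but now applied to $\vm{U}^{n+1}$. That is: set $\vec{\mathbf{v}}=\vm{U}^{n+1}$, apply the $G$-norm identity to the BDF2 term, use skew-symmetry of $a_\Gamma$ together with Lemma~\ref{Lemma1}, the trace inequality \eqref{Inq:trace}, and Young's inequality to absorb the explicit interface contribution $\widetilde{a}_\Gamma(\delta\vm{U}^{n+1},\vm{U}^{n+1})$ into $\tfrac38 C_a\|\nabla\vm{U}^{n+1}\|^2$ plus lower-order terms, exactly as in \eqref{actBDF}, and bound the forcing term by $\frac{2C_P^2}{C_a}\|\bar{\partial}\vm{f}^{n+1}\|^2+\frac{C_a}{8C_P^2}\|\vm{U}^{n+1}\|^2$ as in \eqref{forcingBDFlong}. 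Under the time-step restriction \eqref{timestepBDF} the term $\|\delta\vm{U}^{n+1}\|_S^2$ retains a nonnegative coefficient and can be discarded, and we obtain the same recursion \eqref{enineq} with an energy $\widehat{E}_n=|\widehat{\vm{w}}_n|_G^2+\frac{C_a\Delta t}{2}\|\nabla\vm{U}^{n+1}\|^2+\frac{C_a\Delta t}{8}\|\nabla\vm{U}^{n}\|^2$ where $\widehat{\vm{w}}_n=[\vm{U}^{n+1},\vm{U}^n]^T$. Induction then gives $\widehat{E}_n\le\lambda_1^n\widehat{E}_0+C\max_i\|\bar{\partial}\vm{f}^i\|^2$ with the same $\lambda_1=\frac{1}{1+C_7\Delta t}$, and since $\|\vm{U}^{n+1}\|^2\le C_u|\widehat{\vm{w}}_n|_G^2\le C\widehat{E}_n$, the claimed bound \eqref{BDF:barpartialu} follows.

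The one genuinely nonroutine point — and where I would spend the most care — is controlling the ``initial'' energy $\widehat{E}_0$, i.e.\ bounding $\|\vm{U}^1\|^2=\|\bar{\partial}\vm{u}^1\|^2$ (and $\|\vm{U}^0\|$) and $\Delta t\|\nabla\vm{U}^j\|^2$ for $j=0,1$ so that they fold into the constant $C$ in \eqref{BDF:barpartialu}; strictly, $\bar{\partial}\vm{u}^1$ is determined by the startup procedure and by the first application of the scheme, and one needs the already-established $L^2$ and $\|\nabla\cdot\|$ bounds on $\vm{u}^0,\vm{u}^1,\vm{u}^2$ from Theorem~\ref{Longstab:BDF2} (and Corollary~\ref{Longstab:BDF2-1}) to see this is finite; the $\lambda_1^n$ decay factor is what makes the contribution of this possibly-large-but-finite initial data harmless for large $n$. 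A secondary subtlety is making sure that when we difference the scheme we may legitimately take $n\ge 2$ so that levels $n-1,n,n+1$ and $n-2,n-1,n$ are all defined; for the very first difference the argument uses the two-level structure directly. Apart from these bookkeeping issues, every inequality is a carbon copy of the Theorem~\ref{Longstab:BDF2} estimates with $\vm{u}$ replaced by $\bar{\partial}\vm{u}$ and $\vm{f}$ replaced by $\bar{\partial}\vm{f}$.
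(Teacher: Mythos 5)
Your proposal is correct and follows the paper's own argument exactly: the paper likewise differences the scheme at consecutive time levels, tests with $\bar{\partial}\vm{u}^{n+1}$, and then declares the remainder "a verbatim copy of the proof of Theorem~\ref{Longstab:BDF2} with $\vm{f}$ replaced by $\bar{\partial}\vm{f}$." Your additional care about bounding the initial energy $\widehat{E}_0$ via the startup data is a point the paper leaves implicit, but it does not change the route.
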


\begin{proof} For the BDF scheme \eqref{schemeBDF}, we take the difference of the $n$ and $n+1$ level equations to obtain
$$
\begin{aligned}
 &\frac{1}{\Delta t}\big\langle\big\langle D \bar{\partial}\vm{u}^{n+1},\vm{v}\big\rangle\big\rangle +a(\bar{\partial}\vm{u}^{n+1}, \vm{v})
 +b(\mathbf{v},\bar{\partial}p^{n+1})
 +a_{st}(\delta \bar{\partial}\vm{u}^{n+1}, \vm{v})\nonumber\\
 &\qquad\qquad=\big\langle\big\langle\big\langle\bar{\partial}\vm{f}^{n+1}, \vm{v}\big\rangle\big\rangle\big\rangle+a_{\Gamma}(-2\bar{\partial}\vm{u}^n+\bar{\partial}\vm{u}^{n-1},\vm{v}).
\end{aligned}
$$
Now setting $\vm{v}=\bar{\partial}\vm{u}^{n+1}$ and using the skew-symmetry of $a_{\Gamma}$, we have
$$
\begin{aligned}
 &\frac{1}{\Delta t}\big\langle\big\langle  D \bar{\partial}\vm{u}^{n+1},\bar{\partial}\vm{u}^{n+1}\big\rangle\big\rangle
  +a(\bar{\partial}\vm{u}^{n+1}, \bar{\partial}\vm{u}^{n+1})
 +a_{st}(\delta \bar{\partial}\vm{u}^{n+1}, \bar{\partial}\vm{u}^{n+1})\nonumber\\
 &\qquad\qquad=\big\langle\big\langle\big\langle\bar{\partial}\vm{f}^{n+1}, \bar{\partial}\vm{u}^{n+1}\big\rangle\big\rangle\big\rangle+a_{\Gamma}(\delta\bar{\partial}\vm{u}^{n+1},\bar{\partial}\vm{u}^{n+1}).
\end{aligned}
$$
The rest proof is a verbatim copy of the proof of Theorem \ref{Longstab:BDF2} with $\vm{f}$ replaced by $\bar{\partial}\vm{f}$.
\end{proof}

A direct consequence of the Lemma \ref {lem:BDF:barpartialu} is the following result, once we realize that $\frac{1}{\Delta t}D\vm{u}^{n+1}=\frac{3}{2}\bar{\partial}\vm{u}^{n+1}-\half \bar{\partial} \vm{u}^n$ and $\frac{1}{\Delta t}\delta \vm{u}^{n+1} = \bar{\partial}\vm{u}^{n+1}- \bar{\partial} \vm{u}^n$.

\begin{corollary}
Let $\vm{u}^n$ be the solution to  the {\em BDF2} scheme \eqref{schemeBDF}. Then,
\begin{equation}\label{BDF:Dunorm}
   \|\frac{1}{\Delta t}D \vm{u}^{n+1} \|^2 +\|\frac{1}{\Delta t}\delta \vm{u}^{n+1}\|^2\le
  C \lambda_1^{n} +C \max_i\|\bar{\partial}\vm{f}^{i}\|^2.
\end{equation}
\end{corollary}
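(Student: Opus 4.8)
The plan is to read off the corollary as an immediate consequence of Lemma~\ref{lem:BDF:barpartialu} together with the two algebraic identities quoted just above the statement, namely $\frac{1}{\Delta t}D\vm{u}^{n+1}=\frac{3}{2}\bar{\partial}\vm{u}^{n+1}-\half\bar{\partial}\vm{u}^{n}$ and $\frac{1}{\Delta t}\delta\vm{u}^{n+1}=\bar{\partial}\vm{u}^{n+1}-\bar{\partial}\vm{u}^{n}$. In this way everything reduces to controlling the two consecutive first-order discrete time derivatives $\bar{\partial}\vm{u}^{n+1}$ and $\bar{\partial}\vm{u}^{n}$, for which Lemma~\ref{lem:BDF:barpartialu} already provides a uniform geometric-decay bound.

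First I would apply the triangle inequality to both identities and then square, using $(a+b)^2\le 2a^2+2b^2$, to obtain
\[
\Big\|\frac{1}{\Delta t}D\vm{u}^{n+1}\Big\|^2+\Big\|\frac{1}{\Delta t}\delta\vm{u}^{n+1}\Big\|^2
\le C\big(\|\bar{\partial}\vm{u}^{n+1}\|^2+\|\bar{\partial}\vm{u}^{n}\|^2\big)
\]
with an absolute constant $C$. Second, I would invoke Lemma~\ref{lem:BDF:barpartialu} at the indices $n$ and $n-1$, giving $\|\bar{\partial}\vm{u}^{n+1}\|^2\le C\lambda_1^{n}+C\max_i\|\bar{\partial}\vm{f}^{i}\|^2$ and $\|\bar{\partial}\vm{u}^{n}\|^2\le C\lambda_1^{n-1}+C\max_i\|\bar{\partial}\vm{f}^{i}\|^2$. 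Since $\lambda_1=1/(1+C_7\Delta t)\in(0,1)$ with $C_7\Delta t\le\half$ from the definition of $C_7$ in the proof of Theorem~\ref{Longstab:BDF2}, we have $\lambda_1^{n-1}=\lambda_1^{-1}\lambda_1^{n}\le\frac32\lambda_1^{n}$, so the index-$(n-1)$ bound is of the same form $C\lambda_1^{n}+C\max_i\|\bar{\partial}\vm{f}^{i}\|^2$. Substituting both bounds into the previous display and renaming the constant yields exactly the claimed estimate.

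There is essentially no obstacle here: the result is a routine corollary of Lemma~\ref{lem:BDF:barpartialu}, and the only point worth flagging is that passing from step $n+1$ to step $n$ in that lemma costs only the fixed factor $\lambda_1^{-1}$, which is bounded independently of $n$ (and indeed of $\Delta t$), so the clean geometric-decay form $C\lambda_1^{n}$ is preserved.
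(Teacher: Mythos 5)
Your argument is correct and is exactly the route the paper takes: the paper presents this corollary as a ``direct consequence'' of Lemma~\ref{lem:BDF:barpartialu} via the same two identities $\frac{1}{\Delta t}D\vm{u}^{n+1}=\frac{3}{2}\bar{\partial}\vm{u}^{n+1}-\half\bar{\partial}\vm{u}^{n}$ and $\frac{1}{\Delta t}\delta\vm{u}^{n+1}=\bar{\partial}\vm{u}^{n+1}-\bar{\partial}\vm{u}^{n}$, with the shift from $\lambda_1^{n-1}$ to $\lambda_1^{n}$ absorbed into the constant just as you observe. Your explicit remark that $\lambda_1^{-1}=1+C_7\Delta t\le\frac{3}{2}$ is a nice touch the paper leaves implicit.
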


The following technical lemma is useful in deriving the uniform in time $H^1(\Omega)$ bound.

\begin{lemma}\label{lem:an}
Let $\{a_n\}$ be a nonnegative sequence that satisfies
$$
  a_{n+1}\le c_1\Delta t (a_n+a_{n-1}) + c_2\lambda^n + c_3 \quad\mbox{for $n=1,2,\ldots$,}
$$
where $c_i$, $i=1,2,3$, are  positive  numbers and $0<\lambda<1$.
Moreover, if $\Delta t< \frac{2\lambda}{(1+\sqrt{5})c_1}$, then,
\begin{equation}\label{lem:an:res}
    a_{n+1}\le \frac{c_3}{1-\frac{1+\sqrt{5}}{2}c_1\Delta t}+
    \lambda^n\Big(\frac{c_2}{1-\frac{1+\sqrt{5}}{2\lambda}c_1\Delta t}+a_1+\frac{\sqrt{5}-1}{2}a_0\Big).
\end{equation}
\end{lemma}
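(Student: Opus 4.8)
\emph{Proof proposal.} The plan is to collapse the two-step recursion into a first-order (scalar) recursion by passing to the golden-ratio-weighted quantity $b_n := a_n + \frac{\sqrt5-1}{2}\,a_{n-1}$, defined for $n\ge 1$. The presence of $a_1 + \frac{\sqrt5-1}{2}a_0$ in the target estimate is exactly $b_1$, which strongly suggests this is the right object. Write $r := c_1\Delta t$ and $\vartheta := \frac{\sqrt5-1}{2}$ — the positive root of $\vartheta^2 + \vartheta - 1 = 0$, so that $\vartheta^2 = 1-\vartheta$ and $\vartheta^{-1} = 1 + \vartheta = \frac{1+\sqrt5}{2}$. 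Adding $\vartheta a_n$ to both sides of the hypothesis gives
$$ b_{n+1} = a_{n+1} + \vartheta a_n \le (r+\vartheta)\,a_n + r\,a_{n-1} + c_2\lambda^n + c_3 . $$
Since $r<1$ one checks $r+\vartheta \ge r/\vartheta$, hence $(r+\vartheta)a_n + r a_{n-1} \le (r+\vartheta)\big(a_n + \vartheta a_{n-1}\big) = (r+\vartheta)\,b_n$, which yields the first-order inequality $b_{n+1} \le \mu\, b_n + c_2\lambda^n + c_3$ with $\mu := r+\vartheta$. These three algebraic identities for $\vartheta$ are precisely what make the constant $\frac{1+\sqrt5}{2}$ (and its scaling by $\lambda^{-1}$) surface in the final bound.

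Next I would iterate the scalar inequality, $b_{n+1} \le \mu^n b_1 + \sum_{k=1}^n \mu^{n-k}\big(c_2\lambda^k + c_3\big)$, and bound the two geometric sums separately: $\sum_{k} \mu^{n-k} c_3 \le \frac{c_3}{1-\mu}$ and $\sum_{k} \mu^{n-k}\lambda^k = \lambda^n\sum_{k}(\mu/\lambda)^{n-k} \le \frac{\lambda^n c_2}{1 - \mu/\lambda}$. The last step requires $\mu < \lambda$, and this is exactly where the time-step restriction $\Delta t < \frac{2\lambda}{(1+\sqrt5)c_1}$ is invoked; the same restriction also lets me replace $\mu^n$ by $\lambda^n$ in the leading term. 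Since $\vartheta a_n \ge 0$ gives $a_{n+1} \le b_{n+1}$, combining these estimates produces a bound of exactly the advertised shape $a_{n+1} \le \frac{c_3}{1-(\,\cdot\,)} + \lambda^n\big(\frac{c_2}{1-(\,\cdot\,)} + a_1 + \frac{\sqrt5-1}{2}a_0\big)$, and identifying the quantities in the denominators with $\frac{1+\sqrt5}{2}c_1\Delta t$ and $\frac{1+\sqrt5}{2\lambda}c_1\Delta t$ is then elementary manipulation with $\vartheta$, $r$, and $\lambda$. An equivalent, slightly more mechanical route is a direct strong induction taking the claimed inequality itself as the induction hypothesis, dispatching $m=1$ trivially (the right side dominates $a_1$) and $m=2$ using the actual values $a_0,a_1$; the substitution above simply exposes why the golden ratio governs the constants.

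The step I expect to require the most care is the final bookkeeping rather than anything conceptual: one must verify that the hypothesis on $\Delta t$ really does force $\mu < \lambda$ (so that all geometric series converge and $\mu^n \le \lambda^n$) and keeps every denominator strictly positive, and one must handle $a_0$ — which is not yet controlled by the induction hypothesis — separately in the first one or two steps. The reduction to a first-order recursion via the golden-ratio combination and the subsequent telescoping are short; the risk of sign or constant slips lives entirely in matching the final constants.
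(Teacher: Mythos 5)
Your overall strategy---collapse the two-term recursion to a first-order one via a golden-ratio combination, iterate, and sum geometric series---is the same as the paper's, but your choice of weight breaks the argument at precisely the step you yourself flag as delicate. Write $r=c_1\Delta t$ and $\vartheta=\frac{\sqrt{5}-1}{2}$. With $b_n=a_n+\vartheta a_{n-1}$ your contraction factor is $\mu=r+\vartheta$, which is bounded below by $\vartheta\approx 0.618$ \emph{no matter how small $\Delta t$ is}. The hypothesis $\Delta t<\frac{2\lambda}{(1+\sqrt5)c_1}$ only says $r<\vartheta\lambda$, and this does not imply $\mu<\lambda$: for any $\lambda\le\vartheta$ the inequality $\mu<\lambda$ is impossible for every $\Delta t>0$, and even for $\lambda>\vartheta$ it requires $r<\lambda-\vartheta$, a different and generally far more stringent condition (e.g.\ $\lambda=0.7$, $r=0.2$ is admissible but gives $\mu=0.818>\lambda$). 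It can even happen that $\mu>1$ (take $\lambda$ near $1$ and $r$ near $\vartheta$), so the bound $\sum_k\mu^{n-k}c_3\le c_3/(1-\mu)$ fails too. Hence the two geometric-sum estimates and the replacement $\mu^n\le\lambda^n$ are all unavailable, and even where they are available the resulting denominators $1-\vartheta-r$ and $1-(\vartheta+r)/\lambda$ do not match the advertised $1-\frac{1+\sqrt5}{2}c_1\Delta t$ and $1-\frac{1+\sqrt5}{2\lambda}c_1\Delta t$. The root cause is that your additive constant $\vartheta$ in $\mu$ does not shrink with $\Delta t$, so the time-step restriction has nothing to act on.

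The paper instead weights the combination by the time step, $b_{n+1}=a_{n+1}+\frac{\sqrt5-1}{2}c_1\Delta t\,a_n$, aiming at the contraction factor $\mu=\frac{1+\sqrt5}{2}c_1\Delta t$; that $\mu$ \emph{is} less than $\lambda$ under the stated restriction (because $\frac{1+\sqrt5}{2}\cdot\frac{2}{1+\sqrt5}=1$), it reproduces the denominators in \eqref{lem:an:res}, and $b_1\le a_1+\frac{\sqrt5-1}{2}a_0$ follows from $c_1\Delta t<1$. You should be aware, though, that for a general weight $\beta$ in $b_{n+1}=a_{n+1}+\beta a_n$ the reduction requires \emph{both} $\mu\ge r+\beta$ (to absorb $a_n$) and $\mu\beta\ge r$ (to absorb $a_{n-1}$). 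Your $\beta=\vartheta$ satisfies the second at the cost of a non-small $\mu$; the paper's $\beta=\vartheta r$ gives a small $\mu$ but then $\mu\beta=r^2<r$, so the $a_{n-1}$ term is not actually absorbed as written and that step of the paper's proof deserves scrutiny as well. Balancing the two constraints forces $\beta(\beta+r)=r$, i.e.\ $\mu=\frac{1}{2}\bigl(r+\sqrt{r^2+4r}\bigr)\le\frac{1+\sqrt5}{2}\sqrt{r}$ for $r\le1$, which is the scaling any airtight version of this reduction must carry. In any case, as written your argument does not close.
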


\begin{proof} Define  $b_{n+1}=a_{n+1}+\frac{\sqrt{5}-1}{2}c_1\Delta ta_n$. Then,
$$
 b_{n+1}\le \frac{1+\sqrt{5}}{2}c_1\Delta tb_n+c_2\lambda^n+c_3.
$$
A simple induction leads to
$$
   b_{n+1}\le \sum_{i=1}^n\Big(\frac{1+\sqrt{5}}{2}c_1\Delta t\Big)^{n-i}(c_2\lambda^i +c_3)+  \Big(\frac{1+\sqrt{5}}{2}c_1\Delta t\Big)^{n}b_1.
$$
Now if $\Delta t< \frac{2\lambda}{(1+\sqrt{5})c_1}$,  we have
$$
   b_{n+1}\le\frac{c_3}{1-\frac{1+\sqrt{5}}{2}c_1\Delta t}+ \frac{c_2\lambda^n}{1-\frac{1+\sqrt{5}}{2\lambda}c_1\Delta t}+\lambda^nb_1.
$$
The desired bound on $a_{n+1}$ follows from this inequality, the definition of $b_{n+1}$ and $b_1$, and the fact that $c_1\Delta t < 1$ under the assumption.
\end{proof}

\begin{rmk}If $\Delta t< \frac{\lambda}{(1+\sqrt{5})c_1}$, then \eqref{lem:an:res} implies
$$
 a_{n+1}\le 2c_3+\lambda^n(2c_2+a_1+\frac{\sqrt{5}-1}{2}a_0).
$$
\end{rmk}

\begin{theorem}\label{BDF:H1:stable}
The {\em BDF2} scheme \eqref{schemeBDF} is asymptotically stable with respect to the $H^1(\Omega)$ norm in the sense that the $H^1(\Omega)$ norm of the solution is uniformly bounded in time.
\end{theorem}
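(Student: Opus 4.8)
The plan is to test \eqref{schemeBDF} not with a discrete time derivative but with $\vec{\mathbf v}=\vm u^{n+1}$ itself, so that the resulting relation becomes an \emph{elliptic} identity for $\vm u^{n+1}$ at each time level, and then to absorb the first-difference term using the uniform-in-time bound \eqref{BDF:Dunorm} on $\tfrac1{\Delta t}D\vm u^{n+1}$ and $\tfrac1{\Delta t}\delta\vm u^{n+1}$ already established. First I would set $\vec{\mathbf v}=\vm u^{n+1}$ in \eqref{schemeBDF}; the pressure term drops because $b(\vm u^{n+1},q)=0$, and after substituting $2\vm u^n-\vm u^{n-1}=\vm u^{n+1}-\delta\vm u^{n+1}$, invoking the skew-symmetry $a_\Gamma(\vm u^{n+1},\vm u^{n+1})=0$, and letting the two stabilization contributions recombine, the scheme collapses to
\[
a(\vm u^{n+1},\vm u^{n+1})=\langle\langle\langle\vm f^{n+1},\vm u^{n+1}\rangle\rangle\rangle-\Big\langle\Big\langle\tfrac1{\Delta t}D\vm u^{n+1},\vm u^{n+1}\Big\rangle\Big\rangle+\widetilde a_\Gamma(\delta\vm u^{n+1},\vm u^{n+1}).
\]

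Next I would estimate the three right-hand terms. Coercivity \eqref{ineq2} gives $a(\vm u^{n+1},\vm u^{n+1})\ge C_a\|\nabla\vm u^{n+1}\|^2$ on the left. By Cauchy--Schwarz, the Poincar\'e inequality, Young's inequality, and \eqref{BDF:Dunorm}, the forcing and time-derivative terms are bounded by $\tfrac{C_a}{4}\|\nabla\vm u^{n+1}\|^2+C\big(\|\vm f^{n+1}\|^2+\lambda_1^{\,n}+\max_i\|\bar\partial\vm f^i\|^2\big)$. The interface term $\widetilde a_\Gamma(\delta\vm u^{n+1},\vm u^{n+1})$ is estimated exactly as in the proof of Theorem~\ref{Longstab:BDF2}, i.e.\ via \eqref{actBDF}, which supplies $\tfrac{3C_a}{8}\|\nabla\vm u^{n+1}\|^2+\tfrac{C_a}{16}\|\nabla\vm u^{n}\|^2+\tfrac{C_a}{16}\|\nabla\vm u^{n-1}\|^2+C\|\delta\vm u^{n+1}\|_S^2$, and \eqref{BDF:Dunorm} also yields $\|\delta\vm u^{n+1}\|_S^2\le C\Delta t^2\big(\lambda_1^{\,n}+\max_i\|\bar\partial\vm f^i\|^2\big)$, which is uniformly bounded. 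Collecting terms and absorbing $\tfrac{C_a}{4}+\tfrac{3C_a}{8}=\tfrac{5C_a}{8}<C_a$ into the left-hand side leaves a recursion
\[
\|\nabla\vm u^{n+1}\|^2\le\tfrac16\big(\|\nabla\vm u^{n}\|^2+\|\nabla\vm u^{n-1}\|^2\big)+c_2\lambda_1^{\,n}+c_3,
\]
with $c_2,c_3$ independent of $n$ and $\Delta t$. Since the two contraction coefficients sum to $\tfrac13<1$, this is of the type covered by Lemma~\ref{lem:an} (its smallness hypothesis $\lambda_1>\tfrac{1+\sqrt5}{12}$ is automatic, since $\lambda_1=\tfrac{1}{1+C_7\Delta t}\ge\tfrac23$), so $\|\nabla\vm u^{n}\|^2$ is bounded uniformly in $n$ and $\Delta t$ in terms of the data and $\|\nabla\vm u^0\|^2,\|\nabla\vm u^1\|^2$; together with the Poincar\'e inequality and the $L^2$ bound of Theorem~\ref{Longstab:BDF2}, this gives the claimed uniform-in-time $H^1(\Omega)$ bound, and no time-step restriction beyond \eqref{timestepBDF} is needed.

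The main obstacle is the interface coupling term $\widetilde a_\Gamma(\delta\vm u^{n+1},\vm u^{n+1})$: because it is supported on $\Gamma$, the trace inequality forces the $H^1$ norms of $\vm u$ at the explicit levels $n$ and $n-1$ back into the estimate, and the recursion can be made contractive only because those terms enter, by \eqref{actBDF}, with coefficients $C_a/16$ strictly below the coercivity constant $C_a$. Correspondingly, one cannot simply reuse the cruder $L^2$-stability argument here: there the gradient terms carry a factor $\Delta t$ and only $\Delta t\,\|\nabla\vm u^{n+1}\|^2$ comes out bounded. The improvement to a $\Delta t$-free $H^1$ bound is precisely what the time-derivative estimate \eqref{BDF:Dunorm} buys, by letting us move $\langle\langle\tfrac1{\Delta t}D\vm u^{n+1},\vm u^{n+1}\rangle\rangle$ to the right-hand side without losing a power of $\Delta t$.
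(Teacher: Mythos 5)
Your proposal is correct and shares the paper's overall architecture: both arguments test \eqref{schemeBDF} with $\vm{u}^{n+1}$ to obtain the same elliptic identity, both lean on the uniform bound \eqref{BDF:Dunorm} for $\frac{1}{\Delta t}D\vm{u}^{n+1}$ and $\frac{1}{\Delta t}\delta\vm{u}^{n+1}$ from Lemma~\ref{lem:BDF:barpartialu}, and both close a two-level recursion for the gradient norm via a recursion lemma. Where you genuinely diverge is in the treatment of $\widetilde{a}_\Gamma(\delta\vm{u}^{n+1},\vm{u}^{n+1})$, and this changes the shape of the recursion. The paper uses a $\Delta t$-weighted Young splitting, $\widetilde{a}_\Gamma(\delta\vm{u}^{n+1},\vm{u}^{n+1})\le\bigl(C\|\frac{1}{\Delta t}\delta\vm{u}^{n+1}\|+\frac{\Delta t}{4}\|\nabla\delta\vm{u}^{n+1}\|\bigr)\|\nabla\vm{u}^{n+1}\|$, which yields a \emph{linear} recursion $\|\nabla\vm{u}^{n+1}\|\le\cdots+C_{14}\Delta t(\|\nabla\vm{u}^n\|+\|\nabla\vm{u}^{n-1}\|)$ whose contraction coefficient is only $O(\Delta t)$; Lemma~\ref{lem:an} then applies, but at the cost of the additional restrictions $\Delta t\le C_a$ and $\Delta t\le\frac{\sqrt{1+4/((1+\sqrt{5})C_{14})}-1}{2C_7}$. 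You instead recycle the unconditional estimate \eqref{actBDF} together with $\|\delta\vm{u}^{n+1}\|_S^2=\Delta t^2\|\frac{1}{\Delta t}\delta\vm{u}^{n+1}\|_S^2$, producing a recursion in the \emph{squared} gradient with the absolute coefficients $\frac16+\frac16=\frac13<1$; your arithmetic checks out (absorbing $\frac{5C_a}{8}\|\nabla\vm{u}^{n+1}\|^2$ leaves $\frac{3C_a}{8}$ on the left, and $\frac{C_a/16}{3C_a/8}=\frac16$), and your application of Lemma~\ref{lem:an} with $c_1\Delta t=\frac16$ is legitimate since the hypothesis reduces to $\lambda_1>\frac{1+\sqrt{5}}{12}$, which is guaranteed by $C_7\Delta t\le\half$ --- though Lemma~\ref{lem:an2} with $c_4=c_5=\frac16$ would be the more natural citation. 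The payoff of your variant is that no time-step restriction beyond \eqref{timestepBDF} is needed, whereas the paper keeps the recursion linear in $\|\nabla\vm{u}^n\|$ at the price of extra smallness conditions on $\Delta t$.
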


\begin{proof}
Set $\vm{v}=\vm{u}^{n+1}$ in the BDF scheme \eqref{schemeBDF} and use the skew-symmetry property of $a_{\Gamma}$ to obtain
$$
   a(\vm{u}^{n+1},\vm{u}^{n+1})=-\frac{1}{\Delta t}\big\langle\big\langle D\vm{u}^{n+1},\vm{u}^{n+1}\big\rangle\big\rangle
   + \widetilde{a}_{\Gamma}(\delta \vm{u}^{n+1}, \vm{u}^{n+1})
   +\big\langle\big\langle\big\langle\vm{f}^{n+1},\vm{u}^{n+1}\big\rangle\big\rangle\big\rangle.
$$
Note that
$$
-\frac{1}{\Delta t}\big\langle\big\langle D\vm{u}^{n+1},\vm{u}^{n+1}\big\rangle\big\rangle
+\big\langle\big\langle\big\langle \vm{f}^{n+1},\vm{u}^{n+1}\big\rangle\big\rangle\big\rangle\le
C\left(\|\frac{1}{\Delta t} D\vm{u}^{n+1}\|+\|\vm{f}^{n+1}\|\right)
\|\nabla \vm{u}^{n+1}\|
$$
and
$$
 \widetilde{a}_{\Gamma}(\delta \vm{u}^{n+1}, \vm{u}^{n+1})\le  \left(C\|\frac{1}{\Delta t}\delta \vm{u}^{n+1}\|+
 \frac{\Delta t}{4}\|\nabla\delta \vm{u}^{n+1}\|\right)\|\nabla  \vm{u}^{n+1}\|,.
$$
Using the coercivity condition \eqref{ineq2} and \eqref{BDF:Dunorm}, we deduce
$$
\begin{aligned}
  \|\nabla\vm{u}^{n+1}\| &\le C_9\left(\|\frac{1}{\Delta t} D\vm{u}^{n+1}\|+\|\frac{1}{\Delta t}\delta \vm{u}^{n+1}\|+\|\vm{f}^{n+1}\|
  + \Delta t(\|\nabla\vm{u}^{n}\|+\|\nabla\vm{u}^{n-1}\|)\right)\\
     &\le  C_{15} \lambda_1^{\frac{n}{2}} +C_{15}\max_i(\|\bar{\partial}\vm{f}^{i+1}\|+\|\vm{f}^{i+1}\|)+C_{14} \Delta t(\|\nabla\vm{u}^{n}\|+\|\nabla\vm{u}^{n-1}\|),
\end{aligned}
$$
provided that $\Delta t \le C_a$. The desired uniform in time estimate then follows from Lemma~\ref{lem:an} with $a_n= \|\nabla\vm{u}^{n}\|$, $c_1=C_{14}$, $c_2=C_{15}$, and $\lambda=\lambda_1^{\half}= (1+C_7\Delta t)^{-\half}$. Specifially, provided that the time step is small enough in the sense that $\Delta t\le\frac{\sqrt{1+\frac{4}{(1+\sqrt{5})C_{14}}}-1}{2C_7}$, the time-step condition in Lemma~\ref{lem:an} is verified, i.e.,
$
 \frac{(1+\sqrt{5})C_{14}} {2\lambda_1}\Delta t\le \half .
$
Hence  by Lemma~\ref{lem:an},
$$
    \|\nabla\vm{u}^{n+1}\|\le C\lambda_1^{\frac{n}{2}}+ 2C_{15}\max_i(\|\bar{\partial}\vm{f}^{i+1}\|+\|\vm{f}^{i+1}\|).
$$
\end{proof}

\subsection{Uniform in time $H^1(\Omega)$ bound for the AMB2 scheme}\label{sec:42}

In this subsection, we assume that the time-step restriction \eqref{timestepAMB} holds.

Utilizing the same arguments as for Lemma~\ref{lem:BDF:barpartialu}, we can deduce that the discrete time derivative of the solution of \eqref{schemeAMB} is uniformly bounded in time.

\begin{lemma}
For the {\em ABM2 scheme}, we have
$$
   \|\bar{\partial}\vm{u}^{n+1}\|^2\le C  \lambda_3^{n-1}  +C\max_i\|\bar{\partial}\vm{f}^{i+\half}\|^2,
$$
where the positive parameter $\lambda_3<1$ is defined in Theorem~{\em\ref{Longstab:AMB2}}.
\end{lemma}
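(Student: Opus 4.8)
The plan is to mimic, essentially verbatim, the argument used for Lemma~\ref{lem:BDF:barpartialu} in the BDF2 case, but starting from the AMB2 scheme \eqref{schemeAMB} rather than \eqref{schemeBDF}. First I would form the first-order difference of the scheme in time: subtract the equation at level $n$ from the equation at level $n+1$. Because every term in \eqref{schemeAMB} is linear in the unknowns, this produces the same equation with $\vm{u}^j$ replaced by $\bar\partial\vm{u}^{j}=\frac{1}{\Delta t}(\vm{u}^{j}-\vm{u}^{j-1})$, $p$ replaced by $\bar\partial p$, and $\vm{f}^{n+\frac12}$ replaced by $\bar\partial\vm{f}^{n+\frac12}$; in particular the combination $\frac32\vm{u}^n-\frac12\vm{u}^{n-1}$ in $\widetilde a_\Gamma$ becomes $\frac32\bar\partial\vm{u}^n-\frac12\bar\partial\vm{u}^{n-1}$, and $D_\alpha\vm{u}^{n+1}$ becomes $D_\alpha\bar\partial\vm{u}^{n+1}$.

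Next I would test the differenced equation with $\vec{\mathbf v}=\bar\partial\vm{u}^{n+1}$. As in the original proof, the divergence constraint $b(D_\alpha\bar\partial\vm{u}^{n+1},q)=0$ together with the choice of test function kills the pressure term $b(\bar\partial\vm{u}^{n+1},D_\alpha\bar\partial p^{n+1})$, and the skew-symmetric coupling $a_\Gamma$ recombines with the explicit interface term exactly as in the passage from \eqref{AMB2:scheme2} to \eqref{weakformAMB}: one rewrites $-\widetilde a_\Gamma(\frac32\bar\partial\vm{u}^n-\frac12\bar\partial\vm{u}^{n-1},\bar\partial\vm{u}^{n+1})$ as $-a_\Gamma(\frac32\bar\partial\vm{u}^n-\frac12\bar\partial\vm{u}^{n-1},\bar\partial\vm{u}^{n+1})+a_{st}(\ldots)$, uses $a_\Gamma(\bar\partial\vm{u}^{n+1},\bar\partial\vm{u}^{n+1})=0$ to turn the first piece into $-a_\Gamma(\frac32\bar\partial\vm{u}^n-\frac12\bar\partial\vm{u}^{n-1}-\bar\partial\vm{u}^{n+1},\bar\partial\vm{u}^{n+1})$, and then applies Lemma~\ref{AMB2:Egamma} (with $\vm{u}$ replaced by $\bar\partial\vm{u}$) to bound the resulting quantity ${\mathcal E}_\Gamma[\bar\partial\vm{u}]$. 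The time-derivative term gives $\frac{1}{\Delta t}(\|\bar\partial\vm{u}^{n+1}\|_S^2-\|\bar\partial\vm{u}^n\|_S^2+\|\bar\partial\vm{u}^{n+1}-\bar\partial\vm{u}^n\|_S^2)$ via $2(a-b)a=|a|^2-|b|^2+|a-b|^2$, the term $2a(D_\alpha\bar\partial\vm{u}^{n+1},\bar\partial\vm{u}^{n+1})$ is bounded below exactly as in \eqref{goodtermAMB}, and the forcing term is handled as in \eqref{forcingAMB2} with $\bar\partial\vm{f}^{n+\frac12}$. From this point the energy $E_n$ of Theorem~\ref{Longstab:AMB2} (built from $\bar\partial\vm{u}$) satisfies the same recursion $(1+C_{13}\Delta t)E_{n+1}\le E_n+C_{10}\|\bar\partial\vm{f}^{n+\frac12}\|^2\Delta t$, whence by induction $\|\bar\partial\vm{u}^{n+1}\|_S^2\le\lambda_3^{\,n-1}E_2+\frac{C_{10}(1+C_{13}\Delta t)}{C_{13}}\max_i\|\bar\partial\vm{f}^{i+\frac12}\|^2$, and \eqref{eq:normS} converts $\|\cdot\|_S$ to $\|\cdot\|$, yielding the claimed bound with the stated exponent.

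There is essentially no new obstacle here, which is why the excerpt simply says ``utilizing the same arguments as for Lemma~\ref{lem:BDF:barpartialu}''; the one point that needs a word of care is the starting index and initialization. The differencing requires both the $n$- and $(n+1)$-level equations, so the recursion for $\bar\partial\vm{u}$ starts at a level one higher than the recursion for $\vm{u}$ itself, which is precisely why the exponent in the statement is $\lambda_3^{\,n-1}$ rather than $\lambda_3^{n}$, with $E_2$ (rather than $E_1$) playing the role of the initial energy; one should also note that $E_2$ is finite under the standing regularity assumptions on the data and the starting values $\vm{u}^0,\vm{u}^1$, paralleling the remark after Theorem~\ref{Longstab:AMB2}. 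The only genuinely quantitative check is that the constants $C_8,C_9,C_{13}$ and the time-step threshold \eqref{timestepAMB} are unchanged — they depend only on $C_{ct}$, $C_{tr}$, $C_a$, the norm-equivalence constants, and $\alpha$ through $\beta_1,\beta_2,\beta_3$, none of which are affected by replacing $\vm{u}$ with $\bar\partial\vm{u}$ — so no strengthening of \eqref{timestepAMB} is needed. The mildly tedious but routine part is re-deriving Lemma~\ref{AMB2:Egamma} for $\bar\partial\vm{u}$, but since that lemma's proof never uses anything about $\vm{u}$ beyond membership in $\mathbf W$, it transfers with no change.
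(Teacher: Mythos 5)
Your proposal is correct and is exactly the argument the paper intends: the paper's "proof" consists of the single sentence that the result follows by the same arguments as Lemma~\ref{lem:BDF:barpartialu}, i.e., difference the scheme in time, test with $\bar\partial\vm{u}^{n+1}$, and rerun the proof of Theorem~\ref{Longstab:AMB2} with $\vm{u}$ and $\vm{f}$ replaced by $\bar\partial\vm{u}$ and $\bar\partial\vm{f}$. Your additional remarks on the index shift (hence $\lambda_3^{n-1}$ with $E_2$ as the initial energy) and on the invariance of the constants and of the time-step restriction \eqref{timestepAMB} are accurate and usefully make explicit what the paper leaves implicit.
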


\begin{lemma} \label{lem:an2}
Let $a_n$ be a nonnegative sequence and let
$$
  a_{n+1}\le c_4 a_n+ c_5a_{n-1} +  c_6 \lambda^{n-1}  +   c_7 \quad\mbox{for $n=1,2,\ldots$},
$$
where $c_i$, $i=4,\ldots,7$, are positive real numbers and $0<\lambda<1$. Let $\xi_1=\frac{\sqrt{c_4^2+4c_5}+c_4}{2}$ and $\xi_2=\frac{\sqrt{c_4^2+4c_5}-c_4}{2}$. If $c_4+c_5<1$, then
\begin{equation}\label{lem:an:res2}
    a_{n+1}\le \xi_1^n(a_1+\xi_2a_0)+\frac{c_6(\max(\lambda,\xi_1))^{n-1}}{1-\min\left(\frac{\lambda}{\xi_1},\frac{\xi_1}{\lambda}\right)}+\frac{c_7}{1-\xi_1}.
\end{equation}
\end{lemma}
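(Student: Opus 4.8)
The plan is to collapse the second-order recursion into a first-order one by a change of variables, mirroring the proof of Lemma~\ref{lem:an}. First I would record the algebra governing $\xi_1$ and $\xi_2$. Since $\xi_1-\xi_2=c_4$ and $\xi_1\xi_2=\tfrac14\bigl((c_4^2+4c_5)-c_4^2\bigr)=c_5$, the number $\xi_1$ is the positive root of $x^2-c_4x-c_5=0$, so $\xi_1^2=c_4\xi_1+c_5$, and $\xi_2=\tfrac12(\sqrt{c_4^2+4c_5}-c_4)\ge0$. Evaluating the upward parabola $p(x)=x^2-c_4x-c_5$ at $x=1$ gives $p(1)=1-c_4-c_5>0$ by hypothesis; since $x=1$ lies to the right of the smaller root $-\xi_2<0$, the inequality $p(1)>0$ forces $1>\xi_1$, i.e. $0<\xi_1<1$. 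This strict bound is exactly what will make the tails below summable uniformly in $n$.

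Next I would introduce the auxiliary nonnegative sequence $b_{n+1}=a_{n+1}+\xi_2a_n$ and check it satisfies a first-order recursion. Using the hypothesis on $a_{n+1}$ and then the identities $c_4+\xi_2=\xi_1$ and $\xi_1\xi_2=c_5$, for $n\ge1$,
\[
b_{n+1}\le (c_4+\xi_2)a_n+c_5a_{n-1}+c_6\lambda^{n-1}+c_7=\xi_1\bigl(a_n+\xi_2a_{n-1}\bigr)+c_6\lambda^{n-1}+c_7=\xi_1 b_n+c_6\lambda^{n-1}+c_7.
\]
A routine induction then yields
\[
b_{n+1}\le \xi_1^{\,n}b_1+\sum_{k=1}^{n}\xi_1^{\,n-k}\bigl(c_6\lambda^{k-1}+c_7\bigr),\qquad b_1=a_1+\xi_2a_0.
\]

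Then I would estimate the two geometric sums. Because $\xi_1<1$, $\sum_{k=1}^{n}\xi_1^{\,n-k}c_7=c_7\sum_{j=0}^{n-1}\xi_1^{\,j}\le c_7/(1-\xi_1)$. For the mixed sum I would factor out whichever of $\lambda,\xi_1$ is larger: reindexing $j=k-1$ gives $\sum_{k=1}^{n}\xi_1^{\,n-k}\lambda^{k-1}=(\max(\lambda,\xi_1))^{n-1}\sum_{m=0}^{n-1}\bigl(\min(\lambda/\xi_1,\xi_1/\lambda)\bigr)^{m}\le (\max(\lambda,\xi_1))^{n-1}\big/\bigl(1-\min(\lambda/\xi_1,\xi_1/\lambda)\bigr)$ whenever $\lambda\ne\xi_1$. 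Substituting both estimates into the bound for $b_{n+1}$ and using $a_{n+1}\le b_{n+1}$ (since $\xi_2a_n\ge0$) produces exactly \eqref{lem:an:res2} with $b_1=a_1+\xi_2a_0$.

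I do not expect a genuine obstacle; the only delicate points are choosing the decoupling coefficient $\xi_2$ so that the cross terms telescope (this is forced by $c_4+\xi_2=\xi_1$ and $\xi_1\xi_2=c_5$) and the elementary verification that $c_4+c_5<1$ yields $\xi_1<1$, which controls both the $c_7$-sum and the geometric tail. The borderline case $\lambda=\xi_1$ is not covered by the stated formula; if needed it is handled separately by absorbing the extra linear factor $n$ into any $\tilde\lambda\in(\xi_1,1)$, but it does not arise in the application in \S\ref{sec:42}.
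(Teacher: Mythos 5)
Your proposal is correct and follows essentially the same route as the paper: the substitution $b_{n+1}=a_{n+1}+\xi_2 a_n$ using $c_4+\xi_2=\xi_1$ and $\xi_1\xi_2=c_5$, the induction to a first-order recursion, and the two geometric-sum bounds are exactly the paper's argument. Your added verification that $c_4+c_5<1$ forces $\xi_1<1$ and your remark on the borderline case $\lambda=\xi_1$ are welcome details the paper leaves implicit.
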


\begin{proof}
Note that $\xi_1<1$ because $c_4+c_5<1$. Letting $b_{n+1}=a_{n+1}+\xi_2a_n$
we have
$$
  b_{n+1}\le \xi_1b_{n}+   c_6 \lambda^{n-1} + c_7.
$$
By induction,
$$
  b_{n+1}\le \xi_1^nb_1+ c_6\sum_{i=1}^{n-1}\xi_1^{n-i}\lambda^i + c_7 \sum_{i=0}^{n-1}\xi_1^i.
$$
Because $\xi_1<1$, then $\sum_{i=0}^{n-1}\xi_1^i \le \frac{1}{1-\xi_1}$ and
$$
  \sum_{i=1}^n\xi_1^{n-i}\lambda^i\le \frac{(\max(\lambda,\xi_1))^{n-1}}{1-\min\left(\frac{\lambda}{\xi_1},\frac{\xi_1}{\lambda}\right)}.
$$
Now \eqref{lem:an:res2} can be obtained.
\end{proof}

\begin{thm}
The {\em ABM2} scheme is   asymptotically stable with respect to the $H^1(\Omega)$ norm.
\end{thm}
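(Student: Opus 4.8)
Below is how I would approach the proof; it closely parallels the proof of Theorem~\ref{BDF:H1:stable} for the BDF2 scheme, with one structural difference that I highlight.

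\textbf{Overall strategy.} As for the BDF2 case, the plan is to test the scheme against $\vm{u}^{n+1}$, use the coercivity \eqref{ineq2} to extract the dissipative quantity, bound every remaining term so that at most a \emph{controllable} multiple of the norm at levels $n$ and $n-1$ survives, and then close a two-step recursion with the discrete Gronwall-type Lemma~\ref{lem:an2}. The new feature, compared with BDF2, is that the implicit term $a(D_\alpha\vm{u}^{n+1},\vm{u}^{n+1})$ couples $\vm{u}^{n+1}$ to $\vm{u}^{n}$ and $\vm{u}^{n-1}$ at order one rather than at order $\Delta t$. Consequently the recursion must be closed in the energy norm $\|\cdot\|_a$ (which, by \eqref{ineq2} and the boundedness of $a(\cdot,\cdot)$, is equivalent to $\|\nabla\cdot\|$ on $\mathbf{W}$) rather than relying on an $O(\Delta t)$ smallness as in §\ref{sec:41}; this is exactly where the assumption $\tfrac12<\alpha<1$ is used.

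\textbf{The test step.} First I would set $\vec{\mathbf{v}}=\vm{u}^{n+1}$ in \eqref{schemeAMB}. The pressure term drops by the discrete incompressibility constraint; the two $a_{st}(\cdot,\cdot)$ contributions combine into $\alpha\,a_{st}(\delta\vm{u}^{n+1},\vm{u}^{n+1})$, since $D_\alpha\vm{u}^{n+1}-(\tfrac32\vm{u}^{n}-\tfrac12\vm{u}^{n-1})=\alpha\,\delta\vm{u}^{n+1}$; and the skew-symmetry of $a_\Gamma(\cdot,\cdot)$ turns the interface term into $a_\Gamma$ evaluated at $-\Delta t\,\bar{\partial}\vm{u}^{n+1}+\tfrac{\Delta t}{2}\bar{\partial}\vm{u}^{n}$, because $\tfrac32\vm{u}^{n}-\tfrac12\vm{u}^{n-1}-\vm{u}^{n+1}$ equals that combination. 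On the left I expand $a(D_\alpha\vm{u}^{n+1},\vm{u}^{n+1})=\alpha\|\vm{u}^{n+1}\|_a^2+(\tfrac32-2\alpha)\,a(\vm{u}^{n},\vm{u}^{n+1})+(\alpha-\tfrac12)\,a(\vm{u}^{n-1},\vm{u}^{n+1})$ and bound the last two terms, via the Cauchy--Schwarz inequality for the $a$-inner product, by $\alpha_1\|\vm{u}^{n}\|_a\|\vm{u}^{n+1}\|_a+\alpha_2\|\vm{u}^{n-1}\|_a\|\vm{u}^{n+1}\|_a$ with $\alpha_1,\alpha_2$ as in \eqref{alpha-beta}. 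This produces an inequality of the form $\alpha\|\vm{u}^{n+1}\|_a^2\le(\alpha_1\|\vm{u}^{n}\|_a+\alpha_2\|\vm{u}^{n-1}\|_a)\|\vm{u}^{n+1}\|_a+(\text{lower order})$.

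\textbf{The lower-order terms and the recursion.} The discrete time-derivative term $\langle\langle\bar{\partial}\vm{u}^{n+1},\vm{u}^{n+1}\rangle\rangle$ and the forcing term are bounded by $C(\|\bar{\partial}\vm{u}^{n+1}\|+\|\vm{f}^{n+\half}\|)\|\vm{u}^{n+1}\|_a$ using the Poincar\'e inequality, \eqref{eq:normS}, and \eqref{ineq2}. For the combined interface/stabilization term $-\alpha\,a_{st}(\delta\vm{u}^{n+1},\vm{u}^{n+1})+a_\Gamma(-\Delta t\,\bar{\partial}\vm{u}^{n+1}+\tfrac{\Delta t}{2}\bar{\partial}\vm{u}^{n},\vm{u}^{n+1})$, both first arguments are $\Delta t$ times first differences of $\vm{u}$, so Lemma~\ref{Lemma1} together with the sharp trace inequality in \eqref{Inq:trace} gives a bound $\le C(\Delta t)^{1/2}\|\bar{\partial}\vm{u}^{n+1}\|^{1/2}\big(\|\nabla\vm{u}^{n+1}\|+\|\nabla\vm{u}^{n}\|+\|\nabla\vm{u}^{n-1}\|\big)^{1/2}\|\nabla\vm{u}^{n+1}\|$; crucially, the uniform-in-time bound on $\|\bar{\partial}\vm{u}^{n+1}\|$ from the lemma established just above this theorem (which rests on Theorem~\ref{Longstab:AMB2}) makes the prefactor $C(\Delta t)^{1/2}\|\bar{\partial}\vm{u}^{n+1}\|^{1/2}$ uniformly $O((\Delta t)^{1/2})$, up to a geometrically decaying remainder. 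A sequence of Young inequalities then (i) absorbs an \emph{arbitrarily small} multiple of $\|\nabla\vm{u}^{n+1}\|^2$ into the left-hand side, (ii) leaves $O(\Delta t)$ and $O(\theta)$ (with $\theta$ a free small parameter) multiples of $\|\nabla\vm{u}^{n}\|$, $\|\nabla\vm{u}^{n-1}\|$, and (iii) produces additive terms bounded by $C\lambda^{\,n-1}+C$ for a suitable power $\lambda=\lambda_3^{1/4}<1$, with constants depending on $\max_i\|\vm{f}^{i+\half}\|$ and the data. Dividing through by $\|\vm{u}^{n+1}\|_a$ and using $\|\nabla\cdot\|\le C_a^{-1/2}\|\cdot\|_a$ everywhere, one arrives at a recursion $a_{n+1}\le c_4 a_n+c_5 a_{n-1}+c_6\lambda^{\,n-1}+c_7$ with $a_n=\|\vm{u}^{n}\|_a$, $c_4=\tfrac{\alpha_1}{\alpha}+o(1)$, and $c_5=\tfrac{\alpha_2}{\alpha}+o(1)$.

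\textbf{Conclusion and the main difficulty.} Finally I would invoke Lemma~\ref{lem:an2} with $a_n=\|\vm{u}^{n}\|_a$. Its only nontrivial hypothesis is $c_4+c_5<1$; since $\alpha_1+\alpha_2=\beta_3<\alpha$ for $\tfrac12<\alpha<1$ (as noted just before \eqref{goodtermAMB}), the leading part $\tfrac{\beta_3}{\alpha}$ is strictly less than $1$ with a fixed positive slack, and the $o(1)$ corrections are made smaller than that slack by first fixing the Young parameters $\theta,\varepsilon$ and then requiring $\Delta t$ small enough (in addition to \eqref{timestepAMB}). Lemma~\ref{lem:an2} then yields a uniform-in-time bound on $\|\vm{u}^{n}\|_a$, hence on $\|\nabla\vm{u}^{n}\|$, which together with the uniform $L^2$ bound of Theorem~\ref{Longstab:AMB2} gives the asserted uniform $H^1(\Omega)$ bound. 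I expect the \emph{main obstacle} to be precisely this bookkeeping: one must keep the order-one inter-level coupling strictly inside the $a$-inner product (bounding $|a(\vm{u}^{n},\vm{u}^{n+1})|$ by $\|\nabla\vm{u}^{n}\|\,\|\nabla\vm{u}^{n+1}\|$ would introduce the uncontrolled coercivity ratio and shrink the admissible range of $\alpha$), and one must verify that \emph{each} auxiliary term---stabilization, the Adams--Bashforth extrapolation defect, the discrete time derivative, and the forcing---contributes only an $o(1)$ perturbation to $c_4,c_5$ or a decaying/bounded additive term, never an $O(1)$ coupling; the trace-inequality manipulation that extracts the $(\Delta t)^{1/2}$ factor from the boundary norms of $\delta\vm{u}^{n+1}$ and of the extrapolation defect is the technical heart of this verification.
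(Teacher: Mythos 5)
Your proposal is correct and follows essentially the same route as the paper: test with $\vm{u}^{n+1}$, keep the order-one inter-level coupling inside the $a$-inner product so that the condition $\beta_3<\alpha$ (equivalently $1/2<\alpha<1$) closes the two-step recursion, extract the $(\Delta t)^{1/2}\|\bar{\partial}\vm{u}\|^{1/2}$ factor from the interface term via the trace inequality and the uniform bound on the discrete time derivative, and conclude with Lemma~\ref{lem:an2}. The only (immaterial) difference is that the paper runs the recursion in the squared norms $\|\vm{u}^{n}\|_a^2$ with coefficients $\widetilde{\alpha}_1,\widetilde{\alpha}_2$ obtained by Young's inequality on the cross terms, rather than dividing through by $\|\vm{u}^{n+1}\|_a$; since $\beta_2=\alpha$, both versions lead to the same smallness condition on $\Delta t$.
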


\begin{proof} From \eqref{AMB2:scheme2},
$$
   a(D_\alpha\vm{u}^{n+1},\vm{u}^{n+1})=\big\langle\big\langle\big\langle\vm{f}^{n+\half},\vm{u}^{n+1}\big\rangle\big\rangle\big\rangle +{\mathcal E}_\Gamma - \big\langle\big\langle \bar{\partial}\vm{u}^{n+1},\vm{u}^{n+1}\big\rangle\big\rangle,
$$
where ${\mathcal E}_{\Gamma}$ is defined in Lemma~\ref{AMB2:Egamma}. Note that
$$
 a(D_\alpha\vm{u}^{n+1},\vm{u}^{n+1})\ge \beta_2\|\vm{u}^{n+1}\|_a^2+C_a(\beta_1-\beta_2) \|\nabla\vm{u}^{n+1}\|^2
 -\alpha_1 \|\vm{u}^{n}\|_a^2-\alpha_2 \|\vm{u}^{n-1}\|_a^2,
$$
and
$$
\big\langle\big\langle\big\langle\vm{f}^{n+\frac{1}{2}},\vm{u}^{n+1}\big\rangle\big\rangle\big\rangle
 - \big\langle\big\langle  \bar{\partial}\vm{u}^{n+1},\vm{u}^{n+1}\big\rangle\big\rangle \le
 C\left(\|\bar{\partial}\vm{u}^{n+1}\|^2+\|\vm{f}^{n+\frac{1}{2}}\|^2\right)+
\frac{C_a(\beta_1-\beta_2)}{9}\|\nabla \vm{u}^{n+1}\|^2.
$$
By following the proof in \eqref{AMB2:E} with small modifications, we obtain
$$
\begin{aligned}
  {\mathcal E}_{\Gamma} &\le C\Delta t^\half \|\nabla\vm{u}^{n+1}\|\sum_{j=n}^{n+1}\|\bar{\partial}\vm{u}^{j}\|^{1/2}(\|\nabla\vm{u}^{j}\|^{1/2}
  +\|\nabla\vm{u}^{j-1}\|^{1/2})\\
  &\le C  \sum_{j=n}^{n+1}\|\bar{\partial}\vm{u}^{j}\|^2+\frac{C_a(\beta_1-\beta_2)\Delta t^2}{9}(4\|\nabla\vm{u}^{n+1}\|^2+
  2\|\nabla\vm{u}^{n}\|^2+\|\nabla\vm{u}^{n-1}\|^2).
\end{aligned}
$$
Define $\widetilde{\alpha}_1=\alpha_1+\frac{2(\beta_1-\beta_2)}{9}\Delta t^2$ and $\widetilde{\alpha}_2=\alpha_2+\frac{(\beta_1-\beta_2)}{9}\Delta t^2$. Then,
$$
  \beta_2\|\vm{u}^{n+1}\|_a^2\le \widetilde{\alpha}_1\|\vm{u}^{n}\|_a^2+ \widetilde{\alpha}_2\|\vm{u}^{n-1}\|_a^2+C\lambda_3^{n-2}
  + C\max_i(\|\bar{\partial}\vm{f}^{i+\half}\|^2+\|\vm{f}^{i+\half}\|^2).
$$
Now, if $\Delta t$ is satisfies
$$
 \Delta t< \sqrt{\frac{3(\beta_2-\beta_3)}{\beta_1-\beta_2}}
$$
then  $\beta_2>\widetilde{\alpha}_1+\widetilde{\alpha}_2$ and then, by Lemma~\ref{lem:an2},
$$
\begin{aligned}
    \|\vm{u}^{n+1}\|_a^2&\le \xi_1^n(\|\vm{u}^{1}\|_a^2+\xi_2\|\vm{u}^{0}\|_a^2)+
    \frac{C(\max(\lambda_3,\xi_1))^n}{\beta_2\lambda_3\left(1-\min\left(\frac{\lambda_3}{\xi_1},\frac{\xi_1}{\lambda_3}\right)\right)}
\\&\qquad\qquad + \frac{ C\max_i(\|\bar{\partial}\vm{f}^{i+\half}\|^2+\|\vm{f}^{i+\half}\|^2)}{\beta_2(1-\xi_1)},
\end{aligned}
$$
where   $\xi_1=\frac{\sqrt{\widetilde{\alpha}_1^2+4\widetilde{\alpha}_2\beta_2}+\widetilde{\alpha}_1}{2\beta_2}$
and $\frac{\sqrt{\widetilde{\alpha}_1^2+4\widetilde{\alpha}_2\beta_2}-\widetilde{\alpha}_1}{2\beta_2}$.
\end{proof}

\section{Error analysis}\label{sec:5}

In this section, we study the convergence of the fully-discrete BDF2 scheme, where spatial discretization is effected using finite element methods. A similar study yielding similar results can be done for the AMB2 scheme; however, for the sake of brevity, we omit that study.

Let $\mathbf{H}_{f,h}\subset \mathbf{H}_f$, ${H}_{p,h}\subset {H}_p$, and ${Q}_h\subset {Q}$ denote conforming finite element spaces. Let $\mathbf{W}_h=\mathbf{H}_{f,h}\times {H}_{p,h}$. We assume that the mesh is regular and that the parameter $h$ is a measure of the grid size. We use continuous piecewise polynomials of degrees $k$, $k$, and $k-1$ for the spaces $\mathbf{H}_{f,h}$, ${H}_{p,h}$, and ${Q}_h$, respectively. See \cite{Ciarlet} for details concerning such finite element discretizations. We also assume that the fluid velocity and pressure spaces $\mathbf{H}_{f,h}$ and ${Q}_h$ satisfy the discrete inf-sup condition necessary for ensuring the stability of the finite element discretization; see \cite{GR86}.

\begin{definition}
The Stokes-Darcy projection $P_h: \mathbf{W}\times {Q}\rightarrow  \mathbf{W}_h\times {Q}_h$ is defined as follows. For any $\vm{u}\in \mathbf{W}$ and $p\in {Q}$, let  $P_h \vm{u}\in \mathbf{W}_h$ and $P_h p\in {Q}_h$ denote the finite element solution of
$$
\begin{aligned}
   a(P_h\vm{u}, \vm{v}_h) + b(\mathbf{v}_h,P_hp) + a_{\Gamma}(P_h\vm{u}, \vm{v}_h)&=a(\vm{u}, \vm{v}_h) + b(\mathbf{v}_h,p) + a_{\Gamma}(\vm{u}, \vm{v}_h)
   \\b(P_h\mathbf{u},q_h)&=b(\mathbf{u},q_h)
\end{aligned}
$$
for all $\vm{v}_h\in \mathbf{W}_h$ and $q_h\in Q_h$.
 \end{definition}

It is easy to see that for any  $\vm{u}\in \mathbf{W}$ and $p\in {Q}$, the exist unique $P_h \vm{u}\in \mathbf{W}_h$ and $P_h p\in {Q}_h$.
Moreover, if we assume that $\vm{u}\in (H^{k+1}(\Omega_f))^d\times H^{k+1}(\Omega_p)$ and $p\in H^{k}(\Omega_f)$, then (see, e.g., \cite{Cao2010b}),
\begin{equation}\label{H1Ph}
  \|\vm{u}-P_h\vm{u}\|+ h\|\nabla(\vm{u}-P_h\vm{u})\|\le h^{k+1}(\|\vm{u}\|_{H^{k+1}}+\|p\|_{H^k}).
\end{equation}

\begin{rmk}
The estimate \eqref{H1Ph} and the optimal error estimates derived below assume that $\vm{u}\in (H^{k+1})^d\times H^{k+1}(\Omega_p)$ which requires that the interface $\Gamma$ be sufficiently smooth. In this case, the finite elements may need to be modified near the interface, e.g., by using isoparametric finite element approximations {\em\cite{Ciarlet}.} In any case, in this paper we assume that the optimal error order of convergence can be obtained for the steady-state Stokes-Darcy problems using the same grids and finite element spaces.
\end{rmk}

\subsection{Error analysis of the BDF2-FEM scheme}\label{sec:51}

The fully discrete BDF2-FEM scheme is defined as follows: for $n=0,1,2\ldots$, seek $\vm{u}^{n+1}_h\in \mathbf{W}_h$ and $p^{n+1}_h\in {Q}_h$ such that
\begin{equation}\label{fem}
\begin{aligned}
&\frac{1}{\Delta
t}\big\langle\big\langle D\vm{u}^{n+1}_h,\vec{\mathbf{v}}_h \big\rangle\big\rangle +a({\vm{u}}^{n+1}_h,\vm{v}_h) +b(\mathbf{v}_h,p^{n+1}_h) +a_{st}(\vm{u}^{n+1}_h,\vec{\mathbf{v}}_h)\\
&\qquad\qquad
=\big\langle\big\langle\big\langle\vec{\mathbf{f}}^{n+1},{\vec{\mathbf{v}}}_h\big\rangle\big\rangle\big\rangle
-a_\Gamma(2\vm{u}^{n}_h-\vm{u}^{n-1}_h,\vec{\mathbf{v}}_h)+
a_{st}(2\vm{u}^{n}_h-\vm{u}^{n-1}_h,\vec{\mathbf{v}}_h) \\
&b(\mathbf{u}^{n+1}_h,q_h)=0
\end{aligned}
\end{equation}
are satisfied for all $\vm{v}_h\in \mathbf{W}_h$ and $q_h\in {Q}_h$. Note that for all $\vm{v}_h\in \mathbf{W}_h$ and $q_h\in {Q}_h$,  the exact solution satisfies
\begin{equation}\label{exacts}
\begin{aligned}
 \big\langle\big\langle \vm{u}_t,\vm{v}_h\big\rangle\big\rangle  + a(\vm{u},\vm{v}_h)+a_{\Gamma}(\vm{u},\vm{v}_h)+b(\mathbf{v}_h,p)&=\big\langle\big\langle\big\langle \vm{f},\vm{v}_h\big\rangle\big\rangle\big\rangle \\
  b(\mathbf{u},q_h)&=0.
\end{aligned}
\end{equation}

\begin{theorem}\label{thm:BDF:L2} Assume that the solution of the Darcy-Stokes problem \eqref{SDsys} is sufficiently regular in the sense that
$$
   \vm{u} \in H^3(0,T;H^1)\cap H^2(0,T;H^{k+1}),
$$
that the time-step restrictions \eqref{timestepBDF} and \eqref{timestepBDF2} are satisfied, and that the finite element spaces are chosen so that the projection error bound \eqref{H1Ph} holds.
Then, the solution of the fully-discrete {\em BDF2} scheme \eqref{fem} satisfies the error estimate
$$
\begin{aligned}
   \|\vm{u}(t)-\vm{u}^n_h\|^2&\le \|P_h\vm{u}(t_0)-\vm{u}^0_h\|^2+ \|P_h\vm{u}(t_1)-\vm{u}^1_h\|^2 + C\Delta t(\|\nabla (P_h\vm{u}(t_0)-\vm{u}^0_h)\|^2\\
  &\qquad+ \|\nabla (P_h\vm{u}(t_1)-\vm{u}^1_h)\|^2)
  +C(\Delta t^4+h^{2(k+1)}).
\end{aligned}
$$
Moreover, if the solution of the Stokes-Darcy problem \eqref{SDsys} is long-time regular in the sense that
$$
   \vm{u} \in W^{3, \infty}(0,\infty;H^1)\cap W^{2, \infty}(0,\infty;H^{k+1}),
$$
then, there exists a constant $C_a$ and a generic constant $C$ independent of $\Delta t, h$, or $n$ such that the solution of the {\em BDF2} scheme \eqref{fem}  satisfies the {\em uniform in time error estimates}
\begin{equation}\label{BDF:L2:bound}
\begin{aligned}
   \|\vm{u}(t_n)-\vm{u}^n_h\|^2&\le C \lambda_1^{n-2} (\|P_h\vm{u}(t_0)-\vm{u}^0_h\|^2+ \|P_h\vm{u}(t_1)-\vm{u}^1_h\|^2) \\
  &+ C\Delta t^2 \lambda_1^{n-2} (\|\nabla (P_h\vm{u}(t_0)-\vm{u}^0_h)\|^2+ \|\nabla (P_h\vm{u}(t_1)-\vm{u}^1_h)\|^2) \\
  &+C(\Delta t^4+h^{2(k+1)} )\quad  \forall\, n
\end{aligned}
\end{equation}
and
$$
\begin{aligned}
    &\|\nabla(\vm{u}(t_{n+1})-\vm{u}^{n+1}_h)\|^2 +  \|p(t_{n+1})-p_h^{n+1}\|^2\\\
    &\quad\le C\lambda_1^{n-2}(\|\bar\partial P_h\vm{u}(t_1)-\bar\partial\vm{u}^1_h\|^2+ \|\bar\partial P_h\vm{u}(t_2)-\bar\partial\vm{u}^2_h\|^2) \\
  &\qquad+ C\Delta t^2\lambda_1^{n-2} (\|\nabla (\bar\partial P_h\vm{u}(t_1)-\bar\partial\vm{u}^1_h)\|^2+ \|\nabla (\bar\partial P_h\vm{u}(t_2)-\bar\partial\vm{u}^2_h)\|^2) \\
  &\qquad+C(\Delta t^2+h^{2k})\quad \forall\, n
\end{aligned}
$$
  provided that $\Delta t\le C_a$.
  \end{theorem}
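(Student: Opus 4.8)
The plan is to reduce the three estimates to the stability analysis of \S\ref{sec:3}--\S\ref{sec:4} via the standard projection splitting. Set $\eta^n:=\vm{u}(t_n)-P_h\vm{u}(t_n)$, $\theta^n:=P_h\vm{u}(t_n)-\vm{u}^n_h\in\mathbf{W}_h$, and analogously $\eta^n_p$, $\theta^n_p$ for the pressure, so that $\vm{u}(t_n)-\vm{u}^n_h=\eta^n+\theta^n$ and $p(t_n)-p^n_h=\eta^n_p+\theta^n_p$. By \eqref{H1Ph}, together with the stated time regularity of $\vm{u}$ (and the regularity of $p$ inherited from the steady Stokes--Darcy relation), $\eta^n$ and its discrete time differences contribute only $O(h^{k+1})$ in $L^2$ and $O(h^k)$ in $H^1(\Omega)$ and in $L^2(\Omega_f)$; hence it suffices to bound $\theta^n$ and $\theta^n_p$.

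First I would derive the error equation. Subtracting the fully-discrete scheme \eqref{fem} at level $n+1$ from the exact identity \eqref{exacts} evaluated at $t_{n+1}$, inserting $P_h$, and using the defining orthogonality of the Stokes--Darcy projection — namely $a(\eta^{n+1},\vm{v}_h)+a_\Gamma(\eta^{n+1},\vm{v}_h)+b(\mathbf{v}_h,\eta^{n+1}_p)=0$, while $\theta^{n+1}$ stays discretely divergence-free — one finds that $\theta^{n+1}$ satisfies the \emph{same} BDF2--Gear system \eqref{schemeBDF}:
\[
\tfrac1{\Delta t}\big\langle\big\langle D\theta^{n+1},\vm{v}_h\big\rangle\big\rangle+a(\theta^{n+1},\vm{v}_h)+b(\mathbf{v}_h,\theta^{n+1}_p)+a_{st}(\theta^{n+1},\vm{v}_h)=-\widetilde a_\Gamma(2\theta^n-\theta^{n-1},\vm{v}_h)+\mathcal R^{n+1}(\vm{v}_h),
\]
in which the consistency functional $\mathcal R^{n+1}$ collects: the BDF2 time-truncation error $\vm{u}_t(t_{n+1})-\tfrac1{\Delta t}D\vm{u}(t_{n+1})$, which is $O(\Delta t^2)$ in $L^2$ via $\vm{u}_{ttt}$; the Gear-extrapolation error $a_\Gamma(\delta\vm{u}(t_{n+1}),\cdot)$, which is $O(\Delta t^2)$ on $\Gamma$ via $\vm{u}_{tt}$; and the purely spatial remainders $\tfrac1{\Delta t}\langle\langle D\eta^{n+1},\cdot\rangle\rangle$, $a_\Gamma(\delta\eta^{n+1},\cdot)$ and $a_{st}(\delta P_h\vm{u}(t_{n+1}),\cdot)$, all $O(h^{k+1})$ by \eqref{H1Ph}. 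The crucial structural point is that $\mathcal R^{n+1}$ plays exactly the role played by the forcing $\vm{f}$ in Theorems \ref{thm31} and \ref{Longstab:BDF2} and Corollary \ref{Longstab:BDF2-1}: it is $O(\Delta t^2+h^{k+1})$ in $L^2$ and, under the long-time regularity hypothesis, it is bounded \emph{uniformly} in $n$.

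The three assertions then follow by re-running the corresponding stability proofs with $\theta$ in place of $\vm{u}$ and $\mathcal R$ in place of $\vm{f}$. For the finite-$T$ $L^2$ bound I would test with $\vm{v}_h=\theta^{n+1}$, apply the $G$-norm identity \eqref{Gnormid}, Lemma \ref{Lemma1}, the trace/Young manipulations of \eqref{BDF:interface1}, and a discrete Gronwall argument, exactly as in the proof of Theorem \ref{thm31}; the two initial errors $\theta^0,\theta^1$ enter because BDF2 is two-step. For the uniform-in-time bound \eqref{BDF:L2:bound} I would instead repeat the proofs of Theorem \ref{Longstab:BDF2} and Corollary \ref{Longstab:BDF2-1} under the time-step restrictions \eqref{timestepBDF}--\eqref{timestepBDF2}; the contraction factor $\lambda_1=(1+C_7\Delta t)^{-1}<1$ appears verbatim, it multiplies the initial-error term through the analogue of \eqref{Longstab:E2}, and uniform boundedness of $\max_i\|\mathcal R^i\|$ yields the $O(\Delta t^4+h^{2(k+1)})$ floor. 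For the $H^1(\Omega)$ and $L^2(\Omega_f)$-pressure bounds I would difference the error equation at two consecutive levels to get the BDF2 system for $\bar\partial\theta^{n+1}$ forced by $\bar\partial\mathcal R^{n+1}$, which is only $O(\Delta t)$ in $L^2$ (crudely, $\tfrac1{\Delta t}$ times the $O(\Delta t^2)$ residual difference) plus $O(h^{k+1})$ spatial terms — this is why $\bar\partial\theta^1,\bar\partial\theta^2$ appear as data — bound $\|\bar\partial\theta^{n+1}\|^2$ uniformly exactly as in Lemma \ref{lem:BDF:barpartialu} and the estimate \eqref{BDF:Dunorm}, recover $\|\nabla\theta^{n+1}\|$ from the coercivity \eqref{ineq2} together with Lemma \ref{lem:an} as in the proof of Theorem \ref{BDF:H1:stable}, and finally recover $\|\theta^{n+1}_p\|$ from the discrete inf--sup condition applied to the momentum part of the error equation, whose remaining terms have all just been bounded. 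Adding back $\eta^n$ and $\bar\partial\eta^n$ via \eqref{H1Ph} completes all three estimates.

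The step I expect to be the main obstacle is, as in the stability analysis, the careful handling of $\mathcal R^{n+1}$: each dangerous piece must be split so that its $\Gamma$-part is presented in precisely the form controlled by the trace-plus-Young estimates \eqref{BDF:interface1} and \eqref{actBDF}, so that the stabilization $a_{st}$ and the slack term $\tfrac14\|\delta\theta^{n+1}\|_S^2$ absorb it without degrading the decay rate $\lambda_1$. One must also verify that the stated regularity — $\vm{u}\in H^3(0,T;H^1)\cap H^2(0,T;H^{k+1})$, respectively its $W^{3,\infty}\cap W^{2,\infty}$ analogue — is exactly what is needed to bound $\vm{u}_t-\tfrac1{\Delta t}D\vm{u}$, $\delta\vm{u}$ and $D\eta^{n+1}$, and, for the $H^1$/pressure estimate, their once-time-differenced counterparts. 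The rest is a transcription of the arguments of \S\ref{sec:3} and \S\ref{sec:4}.
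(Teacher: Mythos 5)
Your proposal is correct and follows essentially the same route as the paper: the $\eta/\theta$ projection splitting, the observation that $\theta^{n+1}$ satisfies the BDF2--Gear scheme driven by a consistency functional (BDF2 truncation error, Gear-extrapolation error, and $P_h$-remainders, the latter killed in part by the defining orthogonality of the Stokes--Darcy projection), the re-running of the stability proofs of Theorems \ref{thm31} and \ref{Longstab:BDF2} and Corollary \ref{Longstab:BDF2-1} with this functional in place of $\vm{f}$, the differenced error equation combined with Lemma \ref{lem:BDF:barpartialu}, Lemma \ref{lem:an}, and Theorem \ref{BDF:H1:stable} for the $H^1$ bound, and the discrete inf--sup condition for the pressure. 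The only minor deviations are cosmetic (e.g., the paper's Taylor bounds for $\delta\vm{u}(t_{n+1})$ are phrased via $\vm{u}_{ttt}$ consistent with its $H^3$-in-time hypothesis, and the finite-$T$ bound telescopes without an explicit Gronwall step), and none affects the argument.
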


\begin{proof}
Let $\vm{e}^n=\vm{u}(t_n)-\vm{u}^n_h$ denote the error at the time $t=t_n$. Then, from \eqref{fem} and \eqref{exacts}, we have
\begin{align}
 &
 \begin{aligned}
 &\frac{1}{\Delta t}\big\langle\big\langle  D\vm{e}^{n+1}, \vm{v}_h\big\rangle\big\rangle  +a(\vm{e}^{n+1},\vm{v}_h)+b(\mathbf{v}_h, p(t_{n+1})-p_h^{n+1}) + a_{\Gamma}(\vm{e}^{n+1}, \vm{v}_h)\\
 &\qquad\qquad -\widetilde{a}_{\Gamma}(\delta \vm{e}^{n+1},\vm{v}_h)
 =\big\langle \big\langle  \omega_1^{n+1}, \vm{v}_h\big\rangle\big\rangle   -\widetilde{a}_{\Gamma}(\delta \vm{u}(t_{n+1}),\vm{v}_h)
\end{aligned}  \label{error:BDFe1} \\
  &b(\mathbf{e}^{n+1}, q_h)=0,  \nonumber
\end{align}
where ${\boldsymbol\omega}_1^{n+1}=-\vm{u}_t(t_{n+1})+\frac{1}{\Delta t}D\vm{u}(t_{n+1})$.
Let $\vm{\boldsymbol\rho}^n=\vm{u}(t_n)-P_h \vm{u}(t_n)$ and $\vm{\boldsymbol\theta}^n=P_h\vm{u}(t_n)-\vm{u}^n_h$. Then, $\vm{\boldsymbol\theta}^n\in \mathbf{W}_h$ and is discretely divergence free, i.e.,
\begin{equation}\label{disdf}
   b({\boldsymbol\theta}^n, q_h)=0 \quad \forall\, q_h\in {Q}_h.
\end{equation}
Because $\vm{e}^n=\vm{\boldsymbol\theta}^n+\vm{\boldsymbol\rho}^n$, the error equation \eqref{error:BDFe1} can be recast as
$$
\begin{aligned}
  &\frac{1}{\Delta t}\big\langle\big\langle  D\vm{{\boldsymbol\theta}}^{n+1}, \vm{v}_h\big\rangle\big\rangle  +a(\vm{{\boldsymbol\theta}}^{n+1},\vm{v}_h)+ a_{\Gamma}(\vm{{\boldsymbol\theta}}^{n+1}, \vm{v}_h)-\widetilde{a}_{\Gamma}(\delta \vm{{\boldsymbol\theta}}^{n+1},\vm{v}_h) \\
  &\quad=  \big\langle\big\langle  {\boldsymbol\omega}_1^{n+1}, \vm{v}_h\big\rangle\big\rangle
  -\widetilde{a}_{\Gamma}(\delta \vm{u}(t_{n+1}),\vm{v}_h)-b(\mathbf{v}_h, p(t_{n+1})-p_h^{n+1}) \\
  &\qquad -\frac{1}{\Delta t}\big\langle\big\langle  D\vm{{\boldsymbol \rho}}^{n+1},\vm{v}_h\big\rangle\big\rangle - a(\vm{{\boldsymbol \rho}}^{n+1},\vm{v}_h)-a_{\Gamma}(\vm{{\boldsymbol \rho}}^{n+1},\vm{v}_h)
  + \widetilde{a}_{\Gamma}(\delta \vm{{\boldsymbol \rho}}^{n+1}, \vm{v}_h) \\
    &\quad=  \big\langle\big\langle  {\boldsymbol\omega}_1^{n+1}, \vm{v}_h\big\rangle\big\rangle
  -\widetilde{a}_{\Gamma}(\delta \vm{u}(t_{n+1}),\vm{v}_h)-b(\mathbf{v}_h, P_hp(t_{n+1})-p_h^{n+1}) \\
  &\qquad -\frac{1}{\Delta t}\big\langle\big\langle  D\vm{{\boldsymbol \rho}}^{n+1},\vm{v}_h\big\rangle\big\rangle
  +\widetilde{a}_{\Gamma}(\delta \vm{{\boldsymbol \rho}}^{n+1}, \vm{v}_h).
\end{aligned}
$$
Setting $\vm{v}_h=\vm{\boldsymbol\theta}^{n+1}$, noting that $a_{\Gamma}(\vm{\boldsymbol\theta}^{n+1}, \vm{\boldsymbol\theta}^{n+1})=0$, and using \eqref{disdf} results in
\begin{equation}\label{BDF:error:L2}
\begin{aligned}
  &\frac{1}{\Delta t}\big\langle\big\langle D\vm{{\boldsymbol \theta}}^{n+1}, \vm{{\boldsymbol \theta}}^{n+1}\big\rangle\big\rangle
    +a(\vm{{\boldsymbol \theta}}^{n+1},\vm{{\boldsymbol \theta}}^{n+1})
  -\widetilde{a}_{\Gamma}(\delta \vm{{\boldsymbol \theta}}^{n+1},\vm{{\boldsymbol \theta}}^{n+1}) =\big\langle\big\langle  {\boldsymbol\omega}_1^{n+1}, \vm{{\boldsymbol \theta}}^{n+1}\big\rangle\big\rangle  \\
&
  \qquad\qquad -\widetilde{a}_{\Gamma}(\delta \vm{u}(t_{n+1}),\vm{{\boldsymbol \theta}}^{n+1})   -\frac{1}{\Delta t}\big\langle\big\langle  D\vm{{\boldsymbol \rho}}^{n+1},\vm{{\boldsymbol \theta}}^{n+1}\big\rangle\big\rangle
  +\widetilde{a}_{\Gamma}(\delta \vm{{\boldsymbol \rho}}^{n+1},\vm{{\boldsymbol \theta}}^{n+1}).
\end{aligned}
\end{equation}
Letting $\vm{w}_n=[\vm{{\boldsymbol \theta}}^{n+1}, \vm{{\boldsymbol \theta}}^n]^T$ and $E_n=|\vec{\mathbf{w}}_{n}|^2_G+\frac{C_a\Delta
t}{2} \|\nabla\vm{{\boldsymbol \theta}}^{n+1} \|^2+\frac{C_a\Delta
t}{8} \|\nabla\vm{{\boldsymbol \theta}}^{n} \|^2$ and following the lines of the proof of Theorem~\ref{Longstab:BDF2}, we have
\begin{equation}\label{error:En}
\begin{aligned}
& E_n +\frac{C_a}{2}\Delta
t\left\|\nabla\vm{{\boldsymbol \theta}}^{n+1}\right\|^2+\frac{C_a}{4}\Delta
t\left\|\nabla\vm{{\boldsymbol \theta}}^{n}\right\|^2 \le E_{n-1} \\
&\qquad   +C\Delta t( \|{\boldsymbol\omega}_1^{n+1}\|^2+ \|\nabla \delta \vm{u}(t_{n+1})\|^2+\|\frac{1}{\Delta t}D\vm{{\boldsymbol \rho}}^{n+1}\|^2+\|\nabla \delta \vm{{\boldsymbol \rho}}^{n+1}\|^2).
\end{aligned}
\end{equation}
By Taylor's theorem with the integral form of the remainder, we have
\begin{equation}\label{error:tr1}
   \|{\boldsymbol\omega}_1^{n+1}\|^2\le C\Delta t^3\int_{t_{n-1}}^{t_{n+1}}\|\vm{u}_{ttt}\|^2dt
   	\le C\Delta t^4 \|\vm{u}_{ttt}\|^2_{L^\infty(0, t_{n+1})}
\end{equation}
and
\begin{equation}\label{error:tr2}
\|\nabla \delta \vm{u}(t_{n+1})\|^2\le  C\Delta t^3\int_{t_{n-1}}^{t_{n+1}}\|\nabla \vm{u}_{ttt}\|^2dt
    \le C\Delta t^4 \|\nabla\vm{u}_{ttt}\|^2_{L^\infty(0, t_{n+1})}.
\end{equation}
Moreover, using \eqref{H1Ph}, we have
\begin{equation}\label{error:tr3}
\begin{aligned}
   \|\frac{1}{\Delta t}D\vm{\boldsymbol\rho}^{n+1}\|^2&\le Ch^{2(k+1)}\|\frac{1}{\Delta t}D\vm{u}(t_{n+1})\|_{H^{k+1}}^2
   \\&\le
    C\frac{h^{2(k+1)}}{\Delta t}\int_{t_{n-1}}^{t_{n+1}}\|\vm{u}_t\|_{H^{k+1}}^2dt
    \le C h^{2(k+1)}\|\vm{u}_t\|_{L^\infty(0, t_{n+1}; H^{k+1})}^2
\end{aligned}
\end{equation}
and
\begin{equation}\label{error:tr4}
\begin{aligned}
  \|\nabla \delta \vm{\boldsymbol\rho}^{n+1}\|^2\le Ch^{2k}\|\delta  \vm{u}(t_{n+1})\|_{H^{k+1}}^2 &\le Ch^{2k}\Delta t^3\int_{t_{n-1}}^{t_{n+1}}\|\vm{u}_{tt}\|_{H^{k+1}}^2dt\\
  &\le C h^{2k} \Delta t^4 \|\vm{u}_{tt}\|_{L^\infty(0, t_{n+1}; H^{k+1})}^2 .
\end{aligned}
\end{equation}
Combining \eqref{error:En}--\eqref{error:tr4}, we have
\begin{equation}
\begin{aligned}
&E_n+\frac{C_a}{2}\Delta
t\sum_{i=1}^{n+1}\left\|\nabla\vm{\theta}^{i}\right\|^2\le E_0+  C\Big(\Delta t^4\int_{0}^{t_{n+1}}(\|\vm{u}_{ttt}\|^2 + \|\nabla \vm{u}_{ttt}\|^2)dt \\
&\qquad  +
{h^{2(k+1)}}\int_{0}^{t_{n+1}}\|\vm{u}_t\|_{H^{k+1}}^2dt + h^{2k}\Delta t^4\int_{0}^{t_{n+1}}\|\vm{u}_{tt}\|_{H^{k+1}}^2dt\Big).
\end{aligned}
\end{equation}
The desired finite time error estimate follows from this and the assumed bound on the projection error $\vm{{\boldsymbol \rho}}^n$.

For the uniform in time $L^2(\Omega)$ bound, we again use \eqref{error:En}--\eqref{error:tr4} to obtain
\begin{equation}\label{eqq1}
\begin{aligned}
&E_n +\frac{C_a}{2}\Delta
t\left\|\nabla\vm{{\boldsymbol \theta}}^{n+1}\right\|^2+\frac{C_a}{4}\Delta
t\left\|\nabla\vm{{\boldsymbol \theta}}^{n}\right\|^2\\
&\quad\le  E_{n-1}+C\Delta t\Big(\Delta t^4\|\vm{u}_{ttt}\|^2_{L^\infty(0, \infty)}+\Delta t^4 \|\nabla\vm{u}_{ttt}\|^2_{L^\infty(0, \infty)} \\
&\qquad +h^{2(k+1)}\|\vm{u}_{t}\|^2_{L^\infty(0, \infty;H^{k+1})}
+h^{2k}\Delta t^4\|\vm{u}_{tt}\|^2_{L^\infty(0, \infty;H^{k+1})}\Big)\\
&\quad\le  E_{n-1}+C\Delta t(\Delta t^4+h^{2(k+1)}).
\end{aligned}
\end{equation}
Using the Poincar\'{e} inequality and the definition of the $G$-norm, we have
$$
\begin{aligned}
&\frac{C_a}{2}\Delta
t\left\|\nabla\vm{{\boldsymbol \theta}}^{n+1}\right\|^2+\frac{C_a}{4}\Delta
t\left\|\nabla\vm{{\boldsymbol \theta}}^{n}\right\|^2\\
 &\quad\ge\frac{C_a}{4}\Delta
t\left\|\nabla\vm{{\boldsymbol \theta}}^{n+1}\right\|^2+\frac{C_a}{8}\Delta
t\left\|\nabla\vm{{\boldsymbol \theta}}^{n}\right\|^2+\frac{C^2_l C_a}{8C^2_P}|\vm{w}_n|^2_G.
\end{aligned}
$$
Then, with $C_7$ defined as in Theorem \ref{Longstab:BDF2}, we have from \eqref{eqq1},
$$
(1+C_7\Delta t)E_n\le E_{n-1}+C\Delta t(\Delta t^4+h^{2(k+1)})
$$
A simple induction argument then leads to
$$
\begin{aligned}
E_n &\le \Big(\frac{1}{1+C_7\Delta t}\Big)^{n-2} E_2+C(\Delta t^4+h^{2(k+1)})\\
&\le C\lambda^{n-2}_1(\|\vm{{\boldsymbol \theta}}^{1}\|^2+\|\vm{{\boldsymbol \theta}}^{0}\|^2)+C\lambda^n_1\Delta t(\|\nabla\vm{{\boldsymbol \theta}}^{1}\|^2+\|\nabla\vm{{\boldsymbol \theta}}^{0}\|^2)+C(\Delta t^4+h^{2(k+1)}),
\end{aligned}
$$
where $\lambda_1$ is defined as in Theorem~\ref{Longstab:BDF2}. The bound \eqref{BDF:L2:bound} then follows from Corollary~\ref{Longstab:BDF2}.

The uniform in time $H^1(\Omega)$-norm error estimate on the velocity and the $L^2(\Omega)$ error estimate on the pressure can be derived as well after we combine the technique used above with techniques from \S\ref{sec:4}.
Indeed, from \eqref{error:tr1} and \eqref{error:tr2} and using the triangle inequality, we have
\begin{equation}\label{error:tr1:bp}
   \|\bar\partial {\boldsymbol\omega}_1^{n+1}\|^2\le C\Delta t\int_{t_{n-2}}^{t_{n+1}}\|\vm{u}_{ttt}\|^2dt\le C\Delta t^2\|\vm{u}_{ttt}\|^2_{L^{\infty}(0,t_{n+1})}
\end{equation}
and
\begin{equation}\label{error:tr2:bp}
\quad \|\nabla \bar\partial \delta \vm{u}(t_{n+1})\|^2\le  C\Delta t\int_{t_{n-2}}^{t_{n+1}}\|\nabla \vm{u}_{ttt}\|^2dt\le C\Delta t^2 \|\nabla \vm{u}_{ttt}\|^2_{L^\infty(0,t_{n+1})}.
\end{equation}
Moreover, by the definitions of  $P_h$, $\bar\partial$, and $D$,
\begin{equation}\label{error:tr3:bp}
\begin{aligned}
  & \|\frac{1}{\Delta t}\bar\partial D\vm{{\boldsymbol \rho}}^{n+1}\|^2\le Ch^{2(k+1)}\|\frac{1}{\Delta t}\bar\partial D\vm{u}(t_{n+1})\|_{H^{k+1}}^2
  \\&\qquad\le
    C\frac{h^{2(k+1)}}{\Delta t}\int_{t_{n-2}}^{t_{n+1}}\|\vm{u}_{tt}\|_{H^{k+1}}^2dt
    \le Ch^{2(k+1)} \|\vm{u}_{tt}\|_{L^{\infty}(0,t_{n+1};H^{k+1})}^2
\end{aligned}
\end{equation}
and by  the triangle inequality and \eqref{error:tr4},
\begin{equation}\label{error:tr4:bp}
  \|\nabla \bar\partial\delta \vm{{\boldsymbol \rho}}^{n+1}\|^2 \le C h^{2k} \Delta t^2 \|\vm{u}_{tt}\|_{L^\infty(0, t_{n+1}; H^{k+1})}^2.
\end{equation}
We combine \eqref{error:tr1:bp}--\eqref{error:tr4:bp} with the stability proof of Lemma~\ref{lem:BDF:barpartialu}, with small modification for the initial steps; see Corollary~\ref{Longstab:BDF2-1}. As a result, we obtain
\begin{equation}\label{eq555}
\begin{aligned}
  & \|\bar\partial \vm{{\boldsymbol \theta}}^{n+1}\|^2\le C\lambda_1^{n-2} (\|\bar\partial\vm{{\boldsymbol \theta}}^1\|^2+ \|\bar\partial\vm{{\boldsymbol \theta}}^2\|^2)
\\&\qquad  + C\Delta t^2\lambda_1^{n-2}(\|\nabla  \bar\partial P_h\vm{{\boldsymbol \theta}}^1\|^2+ \|\nabla  \bar\partial P_h\vm{{\boldsymbol \theta}}^2\|^2)
  +C(\Delta t^2+h^{2(k+1)}).
\end{aligned}
\end{equation}
Note that
\begin{equation}\label{error:tr4:2}
   \|\frac{\delta \vm{u}(t_{i})}{\Delta t}\|^2\le
      C\Delta t^2 \|\vm{u}_{tt}\|_{L^{\infty}(0,t_i)}^2,
\quad
  \|\frac{\delta {\boldsymbol \rho}^{i}}{\Delta t}\|^2\le
      C\Delta t^2h^{2(k+1)} \|\vm{u}_{tt}\|_{L^{\infty}(0,t_i;H^{k+1})}^2.
\end{equation}
Combining \eqref{eq555} and \eqref{error:tr4:2} with \eqref{error:tr1} and \eqref{error:tr3} and following the proof of  Theorem~\ref{BDF:H1:stable},
we have from \eqref{BDF:error:L2}
$$
\begin{aligned}
& \|\nabla \vm{{\boldsymbol \theta}}^{n+1}\|^2\le C\lambda_1^{n-2} (\|\bar\partial\vm{{\boldsymbol \theta}}^1\|^2+ \|\bar\partial\vm{{\boldsymbol \theta}}^2\|^2)
 \\&\qquad + C\Delta t^2\lambda_1^{n-2}(\|\nabla  \bar\partial P_h\vm{{\boldsymbol \theta}}^1\|^2+ \|\nabla  \bar\partial P_h\vm{{\boldsymbol \theta}}^2\|^2)
 +C(\Delta t^2+h^{2(k+1)}).
\end{aligned}
$$
After adding the estimate of $\|\nabla \vm{{\boldsymbol \rho}}^{n+1}\|$ (see \eqref{H1Ph}), we obtain the bound for $\|\nabla(\vm{u}(t_{n+1})-\vm{u}^{n+1}_h)\|^2 $.

The error estimate for the pressure $\|p-p_h\|$ can be obtained by standard mixed finite element analyses; see \cite{GR86}.
\end{proof}

\begin{rmk}
The uniform in time estimates given above imply that the method can be used to obtain an approximate solution of the steady-state equations in case the forcing term is time independent. This follows because the truncation errors listed in \eqref{error:En}--\eqref{error:tr4} vanish for the time-independent problem. Consequently, we have
$$  \|\vm{u}_h^n-\vm{u}^n\|^2 \le C \lambda_1^n (\|\vm{u}_h^0-\vm{u}^0\|^2
  + \Delta t \|\nabla (\vm{u}_h^0-\vm{u}^0)\|^2)
$$
for the steady-state problem.

In the steady-state case, the methods we study can be viewed as a domain decomposition method with the discrete time $n$ playing the role of an iteration number; see {\em\cite{Chen2011}} for a related scheme. The current scheme also enjoys an exponential rate of convergence as does the iterative scheme proposed in {\em\cite{Chen2011}.}
\end{rmk}

\begin{rmk}
Note that the uniform in time error estimate for the velocity with respect to the $H^1(\Omega)$ norm and the the uniform error estimate for the pressure are not second order in time. We do not know if this is an artifact of our approach. However, our numerical experiments in the next section suggest that the long-time convergence rate for the pressure approximation may very well be first order as the analysis suggests.
\end{rmk}

\section{Numerical results}\label{sec:6}

Using three numerical examples, we now illustrate the theoretical results of the previous section.

As was done in previous work \cite{Chen2011,Mu2010}, we set $\Omega_f=(0,1)\times(1,2)$ and $\Omega_p=(0,1)\times(0,1)$, with $\Omega_f$ and $\Omega_p$ separated by the interface $\Gamma=(0,1)\times\{1\}$. We choose the standard continuous piecewise-quadratic finite element space, defined with respect to the matrix domain $\Omega_p$, for approximating the hydraulic head $\phi$. We also choose the Hood-Taylor element pair, defined with respect to the conduit domain $\Omega_f$, i.e., continuous piecewise-quadratics and continuous piecewise-linear finite element spaces for the fluid velocity and pressure approximations, respectively. Uniform triangular meshes are created by first dividing the rectangular domains $\Omega_p$ and $\Omega_f$ into identical small squares and then dividing each square into two triangles. For illustrating the short-time properties of our schemes, we set the final time to $T=1$; for illustrating the long-time behavior, we set $T=100$.

We use three examples with exact solutions. Example 1 is taken from \cite{Mu2010}, Example 2 from \cite{Chen2011}, and Example 3 is a slight modification of an example in \cite{Cao2010b}. To illustrate the accuracy of our schemes, we assume that the error is of the order $O(h^{\theta_1}+\Delta t^{\theta_2})$. We set $\Delta t=h^\theta$ and quantify the numerically estimated order of convergence  $r_\theta=\min(\theta_1,\theta\theta_2)$ with respect to $h$ by calculating
$$
r_\theta\approx \log_2{\frac{\|u_{2h,\theta}-u_{exact}\|_{l^2}}{\|u_{h,\theta}-u_{exact}\|_{l^2}}}.
$$
Here, we use the discrete $l^2$ norm of nodal values to measure errors.

\vskip5pt
\noindent{\bf Example 1.}
We set the exact solution to \cite{Mu2010}
$$
\begin{aligned}
{\bf u}_f({\bf x},t)&=\Big([x^2(y-1)^2+y]\cos t\,\,,\,\,[-\frac{2}{3}x(y-1)^3+2-\pi\sin (\pi x)]\cos t\Big) \\
p_f({\bf x},t)&=[2-\pi\sin (\pi x)]\sin\left(\frac{\pi}{2}y\right)\cos t \\
\phi({\bf x},t)&=[2-\pi\sin (\pi x)][1-y-\cos (\pi y)]\cos t .
\end{aligned}
$$
The right-side data in the partial differential equations, initial conditions, and boundary conditions are then chosen correspondingly.
As done in \cite{Mu2010}, we set the parameters $\gamma_p=\gamma_f=g=S=\nu=\alpha_{BJ}=1$ and ${\mathbb K}={\mathbb I}$; also, we set $\alpha=0.8$ for the AMB2 scheme.

For Table \ref{Ex1:1}, we set $\Delta t=h$ and present results for several values of $h$; the results illustrate the second-order in time accuracy for $\phi$, ${\bf u}_f$, and $p_f$. We also notice that BDF2 has a significantly smaller error than AMB2, illustrating the advantage of the former over the latter scheme, at least for this example. For Tables \ref{Ex1:2} and \ref{Ex1:3}, $\Delta t$ is chosen to be a power of $h$ to illustrate the spatial convergence rates. The results in those tables indicate that the spatial accuracy seems higher than the third-order suggested by our analysis. The extra half order of accuracy may be attributed to super-convergence or super-approximation behaviors; see \cite{Chen2010} for a study of this phenomenon for the steady state case.

\begin{table}[h!]
\begin{center}
\begin{tabular}{|c||c|c|c|c|c|c|}
\cline{2-7}
\multicolumn{1}{c}{} & \multicolumn{2}{|c|}{$e_\phi$} & \multicolumn{2}{c|}{$e_{\bf u}$} & \multicolumn{2}{c|}{$e_p$} \\
\hline
$h$ &  BDF2&  AMB2&  BDF2&   AMB2&  BDF2&   AMB2\\
\hline 1/16 &5.76e-005&3.43e-003  &  8.26e-005  &  1.11e-004 & 1.15e-002 &4.11e-002\\
\hline 1/32 &9.53e-006  &  8.76e-004  &  1.98e-005  &  2.74e-005 & 3.02e-003 &1.07e-002\\
\hline 1/64 &2.35e-006  &  2.21e-004  &  4.85e-006  &  6.79e-006 & 7.73e-004&2.71e-003\\
\hline 1/128 &6.00e-007  &  5.55e-005 & 1.20e-006  &1.69e-006 &1.96e-004 &6.85e-004\\
\hline
\hline $r_{avg}$ & 2.20 & 1.98 & 2.04 &2.01 & 1.97 &1.97\\
\hline
\end{tabular}
\end{center}
\caption{Relative error and order of accuracy with respect to the spatial grid size $h$ for Example {\em1} at $t=1$ and with $\Delta t=h$.} \label{Ex1:1}
\end{table}

\begin{table}[h!]
\begin{center}
\begin{tabular}{|c||c|c|c|c|c|c|}
\cline{2-7}
\multicolumn{1}{c}{} & \multicolumn{2}{|c|}{$e_\phi$} & \multicolumn{2}{c|}{$e_{\bf u}$} & \multicolumn{2}{c|}{$e_p$} \\
\hline
$h$ &  BDF2&  AMB2&  BDF2&   AMB2&  BDF2&   AMB2\\
\hline 1/8  &6.16e-004  &6.83e-004    &  8.14e-005  &  8.37e-005 & 2.81e-002 &3.04e-002\\
\hline 1/16 &5.39e-005  &  6.46e-005  &  7.67e-006  &  7.86e-006 & 7.71e-003 &7.93e-003\\
\hline 1/32 &4.70e-006  &  6.01e-006  &  6.99e-007  &  7.16e-007 & 2.03e-003 &2.05e-003\\
\hline 1/64 &4.13e-007  &  5.51e-007  & 6.26e-008   &6.41e-008   & 5.22e-004 &5.24e-004\\
\hline
\hline $r_{avg}$ & 3.51 & 3.43 & 3.45 &3.45 & 1.92 &1.95\\
\hline
\end{tabular}
\end{center}
\caption{Same information as in Table {\rm\ref{Ex1:1}} but for $\Delta t=h^{3.5/2}$.} \label{Ex1:2}
\end{table}

\begin{table}[h!]
\begin{center}
\begin{tabular}{|c||c|c|c|c|c|c|}
\cline{2-7}
\multicolumn{1}{c}{} & \multicolumn{2}{|c|}{$e_\phi$} & \multicolumn{2}{c|}{$e_{\bf u}$} & \multicolumn{2}{c|}{$e_p$} \\
\hline
$h$ &  BDF2&  AMB2&  BDF2&   AMB2&  BDF2&   AMB2\\
\hline 1/8 &6.17e-004   &  5.82e-004  &  8.11e-005  &  8.17e-005 & 2.78e-002 &2.85e-002\\
\hline 1/16 &5.40e-005  &  5.21e-005  &  7.66e-006  &  7.69e-006 & 7.65e-003 &7.73e-003\\
\hline 1/32 &4.71e-006  &  4.62e-006  &  6.99e-007  &  7.01e-007 & 2.04e-003 &2.03e-003\\
\hline 1/64 &4.13e-007  &  4.09e-007  &  6.26e-008  &  6.28e-008 & 5.22e-004 &5.22e-004\\
\hline
\hline $r_{avg}$ & 3.52 & 3.49 & 3.45 &3.45 & 1.91 &1.92\\
\hline
\end{tabular}
\end{center}
\caption{Same information as in Table {\rm\ref{Ex1:1}} but for $\Delta t=h^{2}$} \label{Ex1:3}
\end{table}

\vskip5pt

\noindent{\bf Example 2.}
We next set the exact solution to the steady-state solution \cite{Chen2011}
$$
\begin{aligned}
{\bf u}_f({\bf x},t)&=\Big(\frac{1}{\pi}\sin(2\pi y)\cos x\,\,,\,\,\Big[2+\frac{1}{\pi^2}\sin^2(\pi y)\Big]\sin x\Big)\\
p_f({\bf x},t)&=0\\
\phi({\bf x},t)&=(e^{-y}-e^y)\sin x .
\end{aligned}
$$
It is easy to see, after scrutinizing the error analysis, that third order  instead of second order in time convergence is expected for $\phi$ and ${\bf u}_f$ in this steady-state case. Our numerical results in Table \ref{Ex2}, for which we have set $\Delta t= h$, suggest 3.5-order convergence for these variables. The convergence rates are consistent with the results of \cite{Chen2011} for the steady-state case. Thus, it seems that the super-convergence results of \cite{Chen2010} hold for our new temporal discretization schemes. The rigorous demonstration of the super-convergence rate can be accomplished by following the analyses of \cite{Chen2010} and will be discussed in future work.

\begin{table}[h!]
\begin{center}
\begin{tabular}{|c||c|c|c|c|c|c|}
\cline{2-7}
\multicolumn{1}{c}{} & \multicolumn{2}{|c|}{$e_\phi$} & \multicolumn{2}{c|}{$e_{\bf u}$} & \multicolumn{2}{c|}{$e_p$} \\
\hline
$h$ &  BDF2&  AMB2&  BDF2&   AMB2&  BDF2&   AMB2\\
\hline 1/16  &1.86e-005  &  2.70e-005  &  1.89e-005  &  1.36e-004 & 7.25e-003 &1.27e-001\\
\hline 1/32  &1.71e-006  &  2.17e-006  &  1.67e-006  &  1.02e-005 & 1.88e-003 &2.59e-002\\
\hline 1/64  &1.55e-007  &  1.79e-007  &  1.46e-007  &  7.39e-007 & 4.78e-004 &5.82e-003\\
\hline 1/128 &1.38e-008  &  1.51e-008  &  1.28e-008  &  5.47e-008 & 1.21e-004 &1.38e-003\\
\hline
\hline $r_{avg}$ & 3.46 & 3.60 & 3.51 &3.76 & 1.97 &2.17\\
\hline
\end{tabular}
\end{center}
\caption{Relative error and order of accuracy with respect to the spatial grid size $h$ for Example {\em2} at $t=1$ and with $\Delta t=h$.} \label{Ex2}
\end{table}

\vskip5pt
\noindent{\bf Example 3.}
To illustrate the long-time behavior of our schemes, we use the following exact solution that is a slight modification of an example in \cite{Cao2010b}:
$$
\begin{aligned}
{\bf u}_f({\bf x},t)&=\Big([x^2y^2+e^{-y}],[-\frac{2}{3}xy^3+[2-\pi\sin(\pi x)]]\Big)[2+\cos(2\pi t)] \\
p_f({\bf x},t)&=-[2-\pi\sin(\pi x)]\cos(2\pi y)[2+\cos(2\pi t)] \\
\phi({\bf x},t)&= [2-\pi\sin(\pi x)][-y+\cos(\pi(1-y))][2+\cos(2\pi t)] .\end{aligned}
$$
In this long time numerical experiment, we set the terminal time $T=100$,  and $h=1/64$. We choose $\Delta t=\frac{1}{128},\frac{1}{256}$ for BDF2 and $\Delta t=\frac{1}{256},\frac{1}{512}$ for AMB2. The relative errors are  plotted in Figures (\ref{fig1})--(\ref{fig3}). It is clear that although the errors grow initially, they remain bounded for all time. Moreover, the second-order in time accuracy for the velocity and the hydraulic head are also evident even in this onerous long-time experiment. The long-time accuracy for the pressure seems to be first-order in time, in agreement with our uniform in time error estimates. However, this is in contrast to the short-time second-order in time accuracy for $p$ as recorded in Table \ref{ShortTimeEx3}.

\begin{figure}[h!]
\centerline{
\includegraphics[height=1.75in]{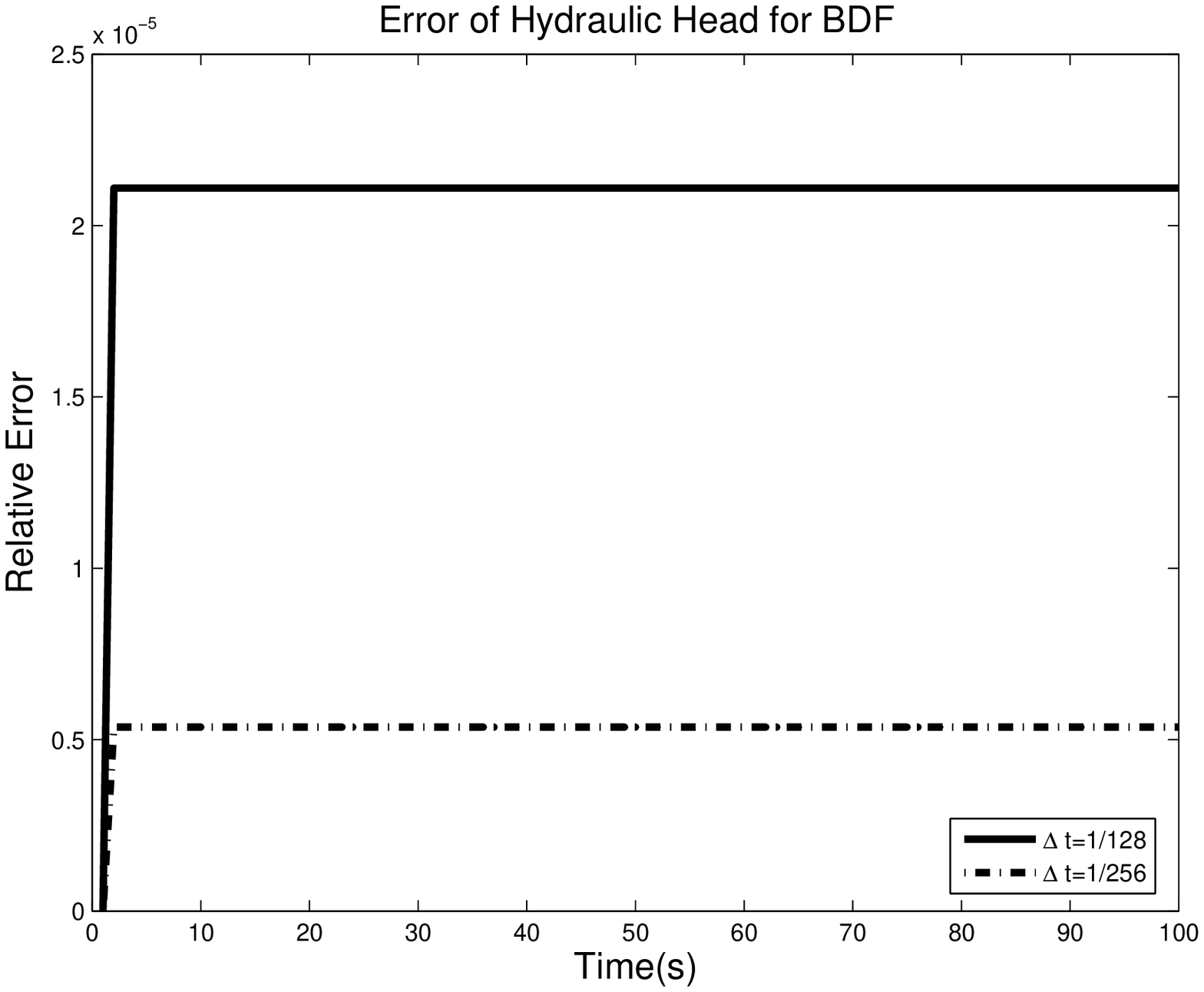}
\includegraphics[height=1.75in]{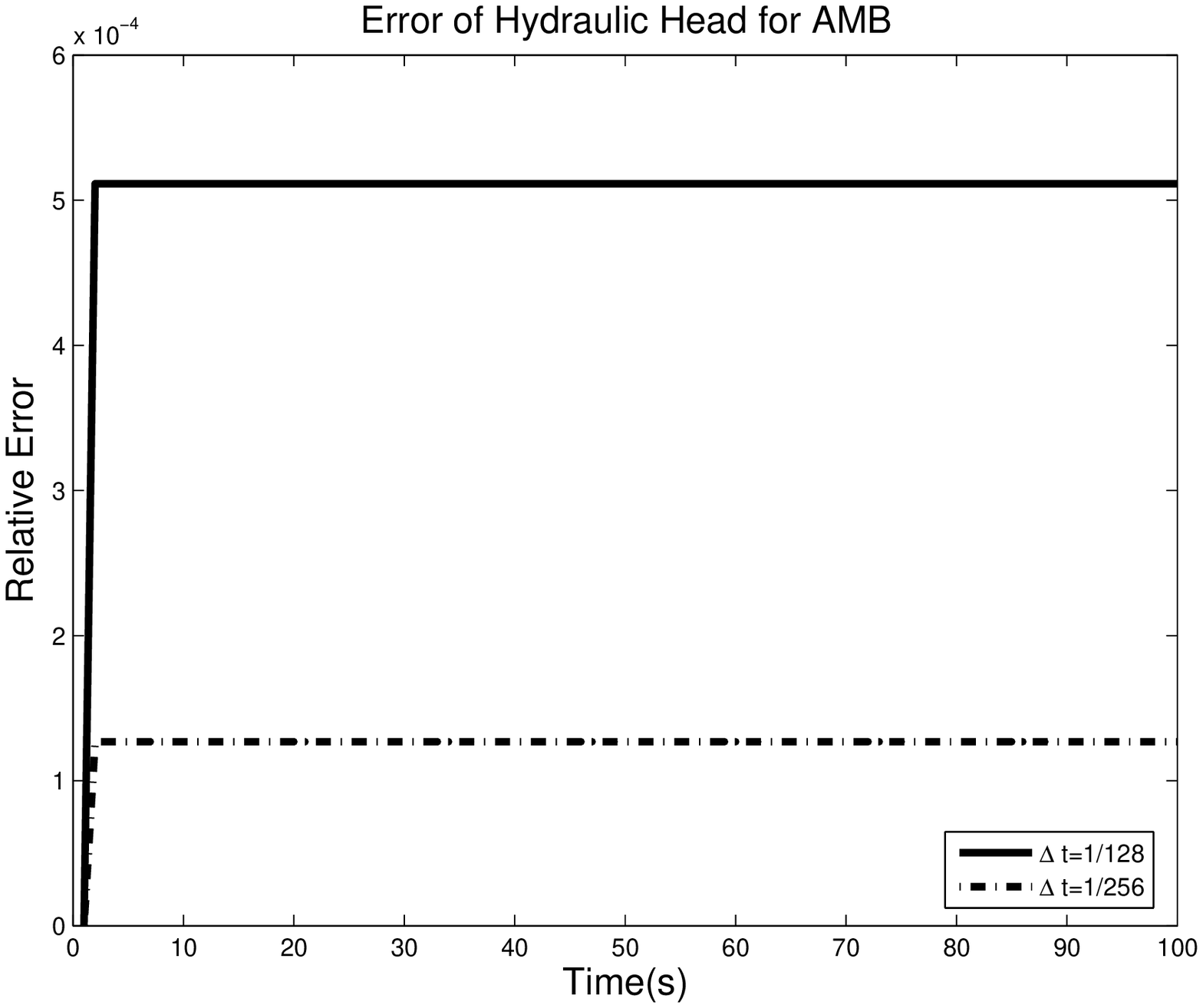}
}
\caption{Relative error for $\phi$ in Example {\em3} for {\em BDF2} (left) and {\em AMB2} (right) up to $t=100$ for $h=1/64$.}\label{fig1}
\end{figure}

\begin{figure}[h!]
\centerline{
\includegraphics[height=1.75in]{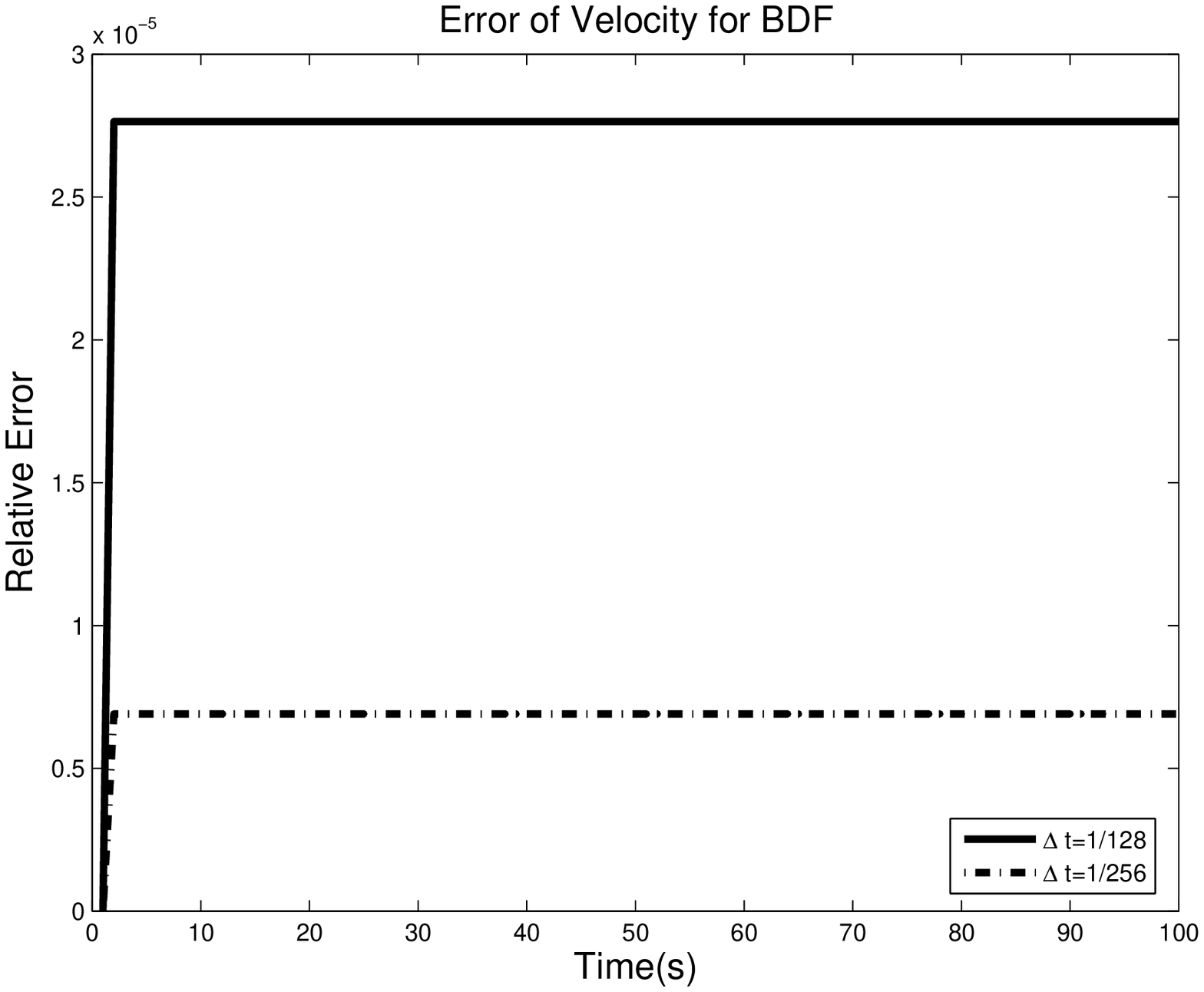}
\includegraphics[height=1.75in]{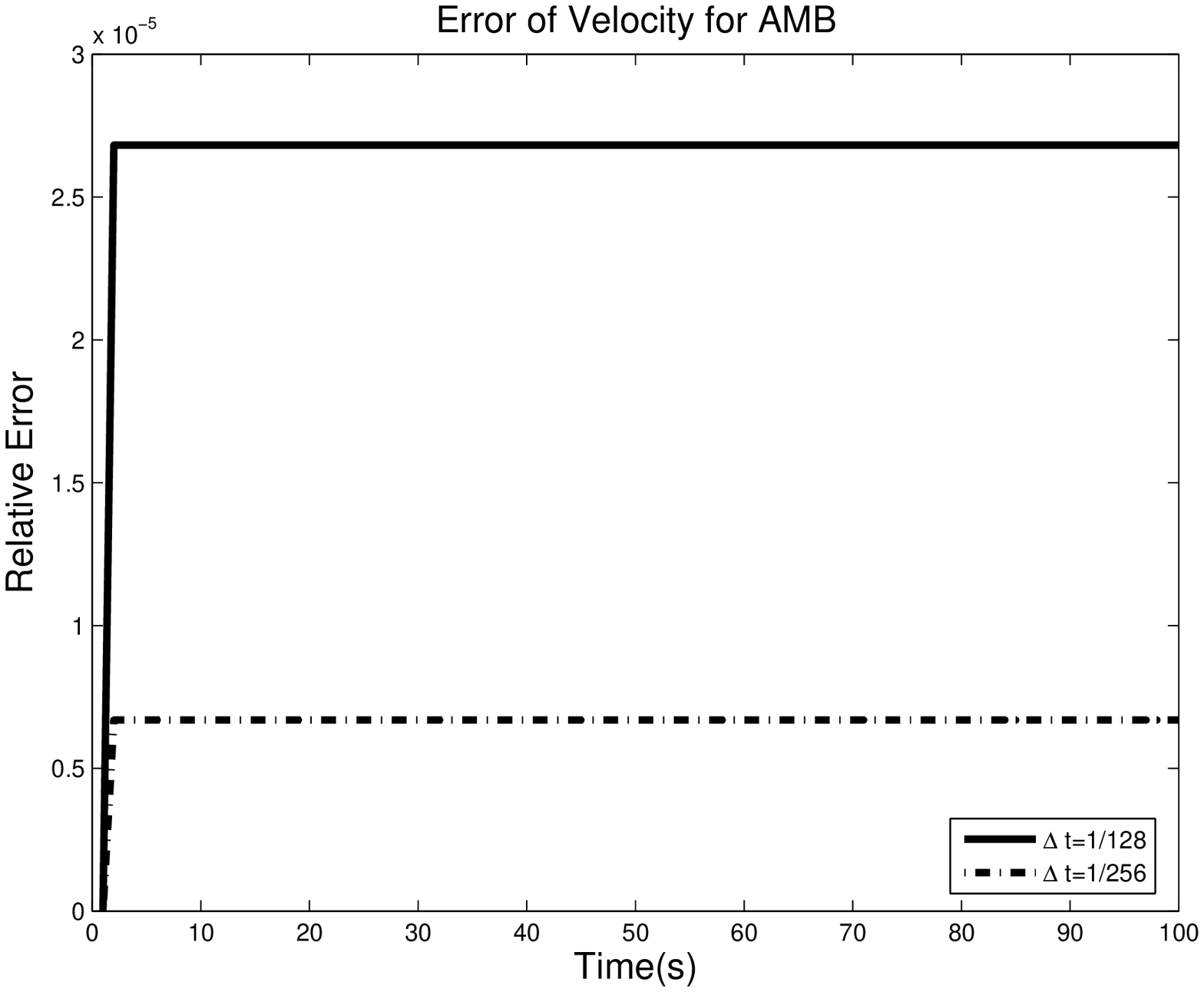}
}
\caption{Same information as for Figure {\em\ref{fig1}} but for ${\bf u}_f$.}\label{fig2}
\end{figure}

\begin{figure}[h!]
\centerline{
\includegraphics[height=1.75in]{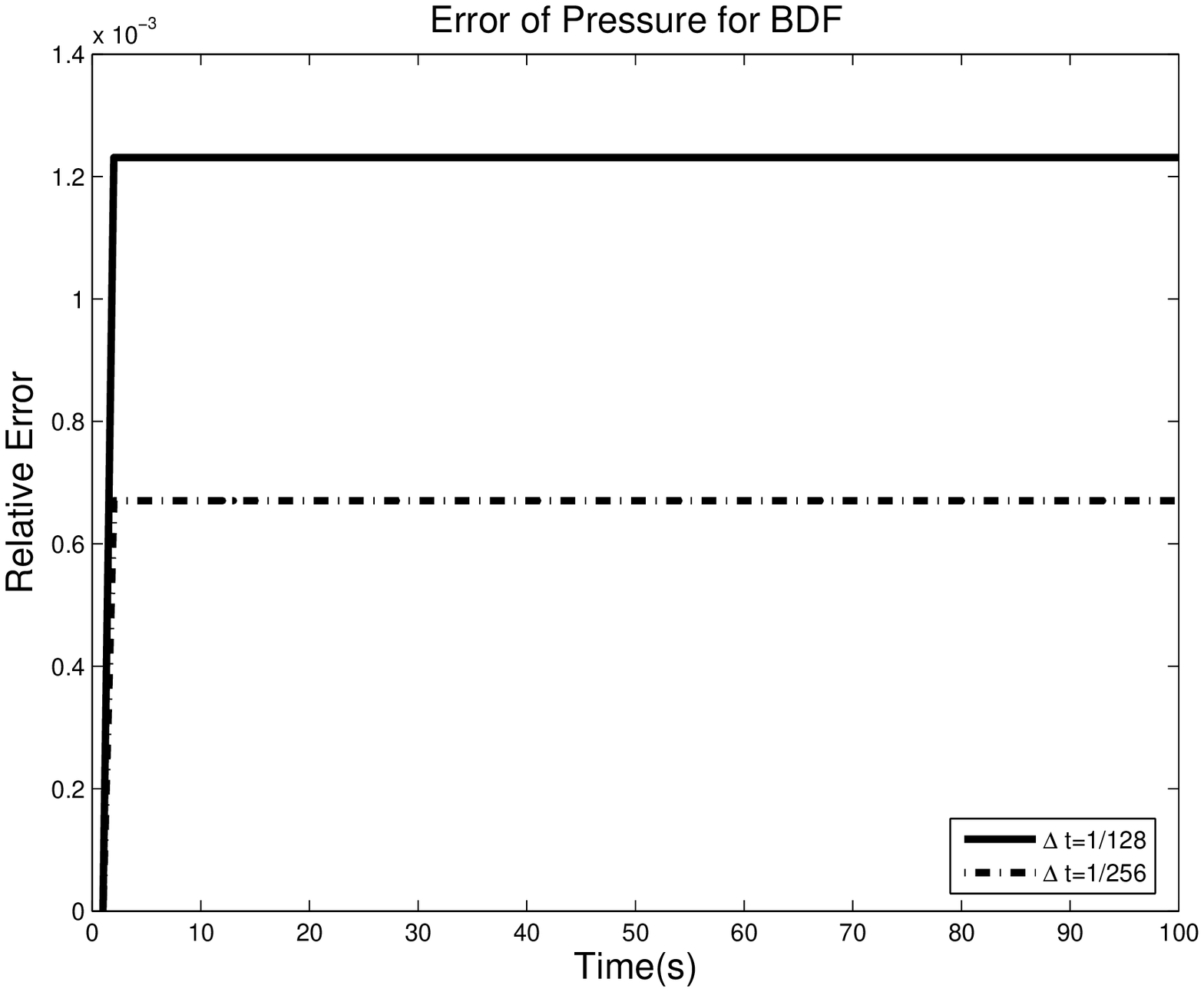}
\includegraphics[height=1.75in]{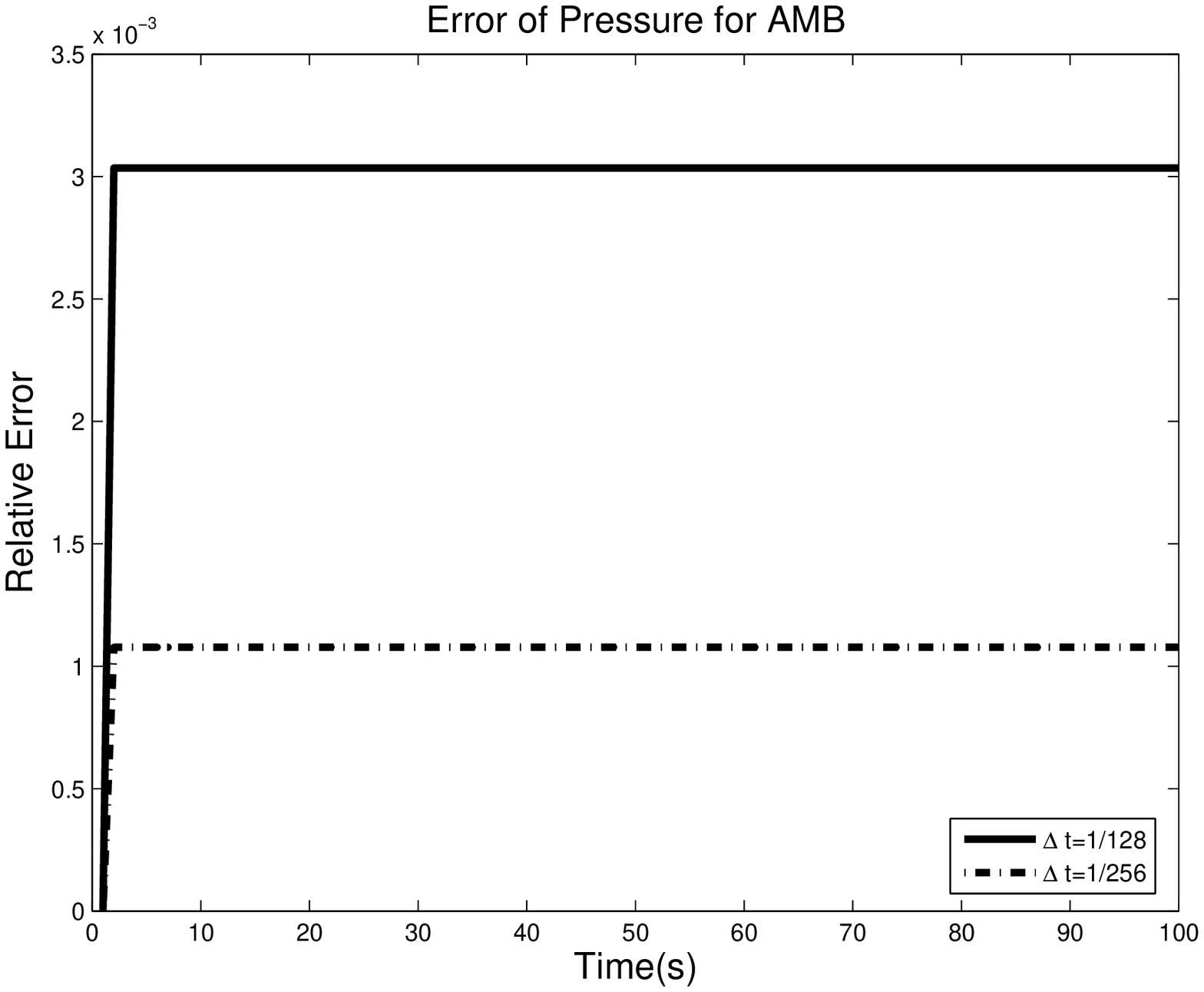}
}
\caption{Same information as for Figure {\em\ref{fig1}} but for $p$.}\label{fig3}
\end{figure}

\begin{table}[h!]
\begin{center}
\begin{tabular}{|c||c|c|c|c|c|c|}
\cline{2-7}
\multicolumn{1}{c}{} & \multicolumn{2}{|c|}{$e_\phi$} & \multicolumn{2}{c|}{$e_{\bf u}$} & \multicolumn{2}{c|}{$e_p$} \\
\hline
$h$ &  BDF2&  AMB2&  BDF2&   AMB2&  BDF2&   AMB2\\
\hline 1/16  & 2.05e-003 & 2.95e-002 &1.49e-003 &1.72e-003 &4.88e-002  &1.70e-001 \\
\hline 1/32  & 4.36e-004 & 7.76e-003 &4.18e-004 &4.26e-004 &1.40e-002  &4.32e-002 \\
\hline 1/64  & 9.84e-005 & 1.99e-003 &1.09e-004 &1.07e-004 &3.64e-003  &1.10e-002 \\
\hline 1/128 & 2.32e-005 & 5.05e-004 &2.75e-005 &2.68e-005 &9.29e-004  &2.79e-003 \\
\hline
\hline
$r_{avg}$ & 2.15 & 1.96 &1.92 &2.00 &1.91 &1.98  \\
\hline
\end{tabular}
\end{center}
\caption{Relative error and order of accuracy with respect to the spatial grid size $h$ for Example {\em3} at $t=1$ and with $\Delta t=h$.} \label{ShortTimeEx3}
\end{table}

\section{Concluding remarks}\label{sec:7}

We proposed and investigated two long-time accurate and efficient numerical methods for coupled Stokes-Darcy systems. The first is a combination of the second-order backward differentiation formula and the second-order Gear extrapolation method. The second is a combination of the second-order  Adams-Moulton and Adams-Bashforth methods. Our algorithms are special cases of the implicit-explicit (IMEX) schemes. The interfacial term that requires communication between the porous media and conduit, i.e., between the Stokes and Darcy components of the model, is treated explicitly in our algorithms so that only two decoupled problems (one Stokes and one Darcy) are solved at each time step. Therefore these schemes can be implemented very efficiently and, in particular, legacy codes can be used for each component.

We have shown that our schemes are unconditionally stable and long-time stable in the sense that solutions remain uniformly bounded in time. The uniform bound in time of the solution leads to uniform in time error estimates. This is a highly desirable feature because the physically interesting phenomena of contaminant sequestration and release usually occur over a very long time scale and one would like to have faithful numerical results over such time scales. Spatial discretization is effected using standard finite element methods. Time-uniform error estimates for the Darcy hydraulic head and the Stokes velocity and pressure for the fully-discrete schemes are also presented. These estimates are illustrated by numerical examples. The methods proposed can be also utilized to approximate steady-state solutions in case the problem data are time independent. All these features suggest that the two methods have strong potential in real applications.

On the other hand, there is still room for improvement. One could design even higher-order numerical methods. A third-order method was proposed in \cite{Cao2012} without analysis. We are currently developing third-order unconditionally stable schemes based on the Adams-Moulton-Bashforth approach. It is also desirable to use different and adaptive time-steps for the two regions involved due to the disparate time-scales in the two regions that one sees in practical situations; see, e.g., \cite{Li2011a,Li2011b}. Also, mortar element method can be naturally adopted and may be useful to efficiently handle the different spatial scales in the two subdomains; see, e.g., \cite{Layton2003}.

So far, all methods deal with confined (saturated) karst aquifers. Most  aquifers are unconfined and hence different methodologies involving either two-phase flows or free boundaries must be considered. Models for unconfined karst aquifers are inherently nonlinear. Mathematical investigation of unconfined karst aquifers is still in its infancy and deserves much needed attention.

Last but not the least, the application of these methods to the quantification of uncertainty in flow and contaminant transport would be of great interest in real applications that feature uncertainty in both the conduit geometry and matrix hydraulic conductivity.

\section*{Acknowledgements}
This work is supported in part by the National Science Foundation through DMS10008852, a planning grant from the Florida State University,
the Ministry of Education of China and the State Administration of Foreign Experts Affairs of China under a 111 project grant (B08018),
and the Natural Science Foundation of China under grant 11171077.

\end{document}